\documentclass[10pt]{amsart}                          
\usepackage{epsfig}            
\usepackage{amssymb,amscd,bbm}     
\usepackage{amsthm,amsgen,amsmath,empheq}
\usepackage[dvips]{xypic}       %  Needed for graphs to draw right
\usepackage{mathdots}
\usepackage{stmaryrd}

\xyoption{curve}                %  more power to xypic
\xyoption{rotate}               %    ditto

\addtolength{\textwidth}{1.4in}                          
\oddsidemargin=0in                          
\evensidemargin=0in

%%%%%%%%%%%%%%%%%%%%%%%%%%%%%%%%%%%%%%%%%%%%%%%%%
%
%  Ben's definitions
%

%
%
% \graphXXX{}{}{} commands are for graphs with three vertices
%  drawn in a triangle.  There are three possible places
%  for edges, each with two different orientations.  I
%  order these possibilities clockwise and append the \graph 
%  command with a ``p'' if the arrow points clockwise and an 
%  ``n'' if the arrow points counterclockwise.  An ``o'' 
%  indicates no edge ``o''s at the end are left out.  
%     -- e.g. \graphpp, or \graphmop, etc.
%  the three arguments for the graph command are the vertex
%  labels.
%

\newcommand{\graphpp}[3]{ \ensuremath{%  3-graph 
 \begin{xy}                           %    _ b 
  (0,-2)*+U{\scriptstyle #1}="a",    %    /| \
  (3,3)*+U{\scriptstyle #2}="b",     %   /   _\|
  (6,-2)*+U{\scriptstyle #3}="c",    %  a      c
  "a";"b"**\dir{-}?>*\dir{>},         %
  "b";"c"**\dir{-}?>*\dir{>}          %
 \end{xy}                             %
} }

\newcommand{\linep}[2]{ \ensuremath{  %
 \begin{xy}                           %  2-graph 
  (0,-2)*+U{\scriptstyle #1}="a",     %   _ b
  (6,2)*+U{\scriptstyle #2}="b",      %   /|
  "a";"b"**\dir{-}?>*\dir{>},         %  a
 \end{xy}                             %
} }

%%%%%%%%%%%%%%%%%%%%%%%%%%%%%%%%%%%%%%%%%%%%%%%%%%%%%%%%%%%%%%%%%
%
%  End of header
%

\theoremstyle{plain}                          
\newtheorem{theorem}{Theorem}[section]                          
\newtheorem{proposition}[theorem]{Proposition}                          
\newtheorem{lemma}[theorem]{Lemma}                          
\newtheorem{corollary}[theorem]{Corollary}

\theoremstyle{definition}                          
\newtheorem{definition}[theorem]{Definition}  

\theoremstyle{remark}                          
\newtheorem{remark}[theorem]{Remark}

\newtheorem{example}[theorem]{Example}

\begin{document}

\title{The Configuration Basis of a Lie Algebra and its Dual}

\author[B. Walter]{Ben Walter} 
\address{
Department of Mathematics \\ Middle East Technical University, Northern Cyprus Campus \\
Kalkanli, Guzelyurt, KKTC, Mersin 10 Turkey
}
\email{benjamin@metu.edu.tr}

\subjclass{17B01; 17B62, 16T30.}
\keywords{Lie coalgebras, Lie algebras, Lyndon words}

\date{\today}

\begin{abstract}
We use the Lie coalgebra and configuration pairing framework presented previously in 
\cite{SiWa07} to derive a new, left-normed monomial basis for free Lie algebras
(built from associative Lyndon-Shirshov words), 
as well as a dual monomial basis for Lie coalgebras.   
Our focus is on computational dexterity gained by using the configuration framework and basis.
We include several explicit examples using the dual coalgebra basis and configuration pairing
to perform Lie algebra computations.  As a corollary of our work, we get a new multiplicative 
basis for the shuffle algebra.
\end{abstract}

\maketitle

\section{Introduction}

Let $V$ be a finite dimensional vector space with ordered basis 
$v_1<\cdots<v_d$.  The (associative) Lyndon-Shirshov words on the alphabet $v_1,\dots,v_d$
are finite words which are 
lexicographically minimal among their cyclic permutations.
Various different methods 
(see e.g. \cite{CHL58}, \cite{Chib06}) %, \cite{Stoh08})
are present in the literature, constructing different
bases for the free Lie algebra on $V$ starting from Lyndon-Shirshov 
words.  We will derive a new basis for $\mathrm{L}V$ from 
Lyndon-Shirshov words, as well
as a dual basis of monomials in the graph presentation of the 
cofree conilpotent Lie coalgebra
$\mathrm{E}V^*$ on $V^*$ using the configuration
pairing and graph coalgebras framework of \cite{SiWa07}.

Since our basis is natural from the point of view of the configuration
pairing, we call it the configuration basis of a Lie algebra.
We emphasize that the surprising property of the configuration basis 
is that it has a dual Lie coalgebra basis of graph monomials.
While some previous work has considered duality (e.g. \cite{MeRe89}) 
in the formation of bases for Lie algebras, previously duality was formal,
rather than using explicit presentations, and work occured in the universal
enveloping algebra of Lie algebras, rather than on the Lie algebras themselves.

Note that even though our starting point is the set of Lyndon words, 
the configuration basis is not a Hall basis.  However, it still satisfies some
of the same nice properties as Hall bases.  For example, 
writing Lie bracket expressions in terms of the configuration basis yields 
integer coefficients (Corollary~\ref{C:integer coefficients}) and 
the Lie polynomial of the configuration basis can be written as shuffles of 
higher ordered elements (Theorem~\ref{P:lower triangular}).
The latter property implies that the configuration basis may be used for
Gr\"obner basis calculations.

\medskip

The paper is structured as follows.  
In Section~\ref{S:background} we recall the  
Lie coalgebras framework introduced in \cite{Sinh06.2} and \cite{SiWa07}.  
We organize ideas slightly differently than \cite{SiWa07} and give extra 
computational examples to clarify the framework and its use in this setting.
The central idea needed is the configuration pairing of Lie algebras and 
coalgebras.  This will be described by writing Lie algebra elements as
trees and Lie coalgebra elements as graphs.

In Section~\ref{S:B} we introduce a new grading on the set of Lyndon-Shirshov
words $\mathcal{B}$ \cite{Chib06}, \cite{Reut93}.
A simple word is $x\dots xy_1\dots y_\ell$ with $x\neq y_i$.  
Words are graded by classifying them as simple words, 
simple words of simple words,
simple words of those, etc.

In Section~\ref{S:main} we recursively define bracketing and graph maps
$\mathcal{L}$ and $\mathcal{G}$ making Lie algebra and coalgebra elements
from Lyndon-Shirshov words.  Our main theorem shows that $\mathcal{L}\mathcal{B}$ and
$\mathcal{G}\mathcal{B}$ are dual monomial bases for the free Lie algebra 
and cofree conilpotent
Lie coalgebra via the configuration pairing.  The set $\mathcal{L}\mathcal{B}$ 
is the configuration basis of the free Lie algebra $\mathrm{L}V$.

In Sections~\ref{S:examples} and \ref{S:Lyndon} we give examples using the 
configuration pairing to write Lie expressions in terms of the configuration
basis.  We also recall the classical Harrison model of conilpotent Lie coalgebras
as words modulo shuffles and note that in this model the Lyndon words themselves
form a basis.  In examples, we show how the configuration pairing, which is not 
part of the classical picture, can be used to perform Lie algebra calculations
using the Lyndon word basis for the Harrison model of $\mathrm{E}V^*$.

In Sections~\ref{S:comparison} and \ref{S:future} we compare the configuration 
basis to some other bases present in the literature, 
connecting with classical Lie models, and
outline some possible avenues of future interest.  In Section~\ref{S:new basis} 
we also construct an 
alternate set of words $\hat{\mathcal{B}}$ which can be used as a replacement 
for the Lyndon-Shirshov words.  The set $\hat{\mathcal{B}}$ is also a new multiplicative
basis for the shuffle algebra.

\section{Graphs, Lie Coalgebras, and the Configuration Pairing}\label{S:background}

We begin with a summary of pertinent results from \cite{SiWa07} and \cite{Sinh06.2} 
slightly rephrased and specialized for the convenience of our setting.  
Throughout this section we will say tree and graph to mean 
rooted, binary tree embedded in the upper-half plane
and oriented, connected, acyclic graph. 
Trees will be denoted $T$ and graphs, $G$.
Labeled trees and graphs,  $\tau=(T,l_T)$ and $\gamma=(G,l_G)$ are trees and graphs
with maps  $l_T:\mathrm{Leaves}(T)\to S_T$ and $l_G:\mathrm{Vertices}(G)\to S_G$ 
where $S_T,S_G$ are labeling sets.  

Our shorthand for writing labeled trees and graphs will be to write corresponding labels
in place of their leaves and vertices.
We do not require $l_T$ or $l_G$ to be injective or surjective.

Let $V$ be a finite dimensional vector space with dual $V^*$.
Write $\mathrm{Tr}(V)$ and $\mathrm{Gr}(V^*)$ for the vector spaces 
generated by trees and graphs 
with labels from $V$ and $V^*$, modulo multilinearity in the labels.  
The vector space $\mathrm{Tr}(V)$ has a standard product defined on monomials as 
$\tau_1\otimes \tau_2 \longmapsto [\tau_1,\tau_2] = 
\begin{aligned}\begin{xy}
	(0,1.5); (1.5,0)**\dir{-};          
  (3,1.5)**\dir{-},               
  (1.5,-1.5); (1.5,0)**\dir{-},       
  (-.4,2.7)*{\scriptstyle \tau_1},   
  (3.8,2.7)*{\scriptstyle \tau_2},   
\end{xy}\end{aligned}
$. 
The vector space $\mathrm{Gr}(V^*)$ has an anti-commutative coproduct 
defined on monomials as 
$\gamma \longmapsto\ ]\gamma[\ 
= \sum_e \gamma^{\hat e}_1 \otimes \gamma^{\hat e}_2 - \gamma^{\hat e}_2 \otimes \gamma^{\hat e}_1$ 
where $\sum_e$ is a sum over all edges of $G$ and $\gamma^{\hat e}_1$, $\gamma^{\hat e}_2$ are the graphs
obtained from $\gamma$ by removing the edge $e$ which points to the 
subgraph $\gamma^{\hat e}_2$.

It is a standard fact that $\mathrm{L}V$,
the free Lie algebra on $V$,  is isomorphic as algebras to $\mathrm{Tr}(V)$
modulo the locally defined anti-symmetry and Jacobi relations: 
\begin{align*}
\text{(anti-symmetry)} \qquad 
 & \qquad
\begin{xy}
   (0,1.5); (1.5,0)**\dir{-};          
   (3,1.5)**\dir{-},               
   (1.5,-1.5); (1.5,0)**\dir{-},       
   (-.4,2.7)*{\scriptstyle T_1},   
   (3.8,2.7)*{\scriptstyle T_2},   
   (1.5,-2.7)*{\scriptstyle R}
\end{xy}\ =\ - \begin{xy}
   (0,1.5); (1.5,0)**\dir{-};          
   (3,1.5)**\dir{-},               
   (1.5,-1.5); (1.5,0)**\dir{-},       
   (-.4,2.7)*{\scriptstyle T_2},   
   (3.8,2.7)*{\scriptstyle T_1},    
   (1.5,-2.7)*{\scriptstyle R}
\end{xy} \\
\text{(Jacobi)} \qquad 
 & \qquad 
 \begin{xy}   
   (1.5,1.5); (3,3)**\dir{-}, 
   (0,3); (3,0)**\dir{-};
   (6,3)**\dir{-},   
   (3,0); (3,-1.5)**\dir{-}, 
   (-.4,4.2)*{\scriptstyle T_1}, 
   (3.2,4.2)*{\scriptstyle T_2},
   (6.8,4.2)*{\scriptstyle T_3}, 
   (3,-2.7)*{\scriptstyle R}
 \end{xy}  \ + \ \begin{xy}   
   (1.5,1.5); (3,3)**\dir{-}, 
   (0,3); (3,0)**\dir{-};
   (6,3)**\dir{-},   
   (3,0); (3,-1.5)**\dir{-}, 
   (-.4,4.2)*{\scriptstyle T_2}, 
   (3.2,4.2)*{\scriptstyle T_3},
   (6.8,4.2)*{\scriptstyle T_1}, 
   (3,-2.7)*{\scriptstyle R}
 \end{xy} \ + \ \begin{xy}   
   (1.5,1.5); (3,3)**\dir{-}, 
   (0,3); (3,0)**\dir{-};
   (6,3)**\dir{-},   
   (3,0); (3,-1.5)**\dir{-}, 
   (-.4,4.2)*{\scriptstyle T_3}, 
   (3.2,4.2)*{\scriptstyle T_1},
   (6.8,4.2)*{\scriptstyle T_2}, 
   (3,-2.7)*{\scriptstyle R}
 \end{xy} 
 \ =\  0,
\end{align*}
where $R$, $T_1$, $T_2$, $T_3$ stand for arbitrary (possibly trivial subtrees) which
are not modified in these operations.
($\mathrm{Tr}(V)$ itself is isomorphic to the free nonassociative binary algebra on $V$, aka the free magma on $V$.)
From \cite{SiWa07} the cofree conilpotent Lie coalgebra on $V^*$, written $\mathrm{E}V^*$, 
is isomorphic as coalgebras to $\mathrm{Gr}(V^*)$ modulo the locally defined 
arrow-reversing and Arnold relations:
\begin{align*}
\text{(arrow-reversing)}\qquad & \qquad
\begin{xy}                           
  (0,-2)*+UR{\scriptstyle a}="a",    
  (3,3)*+UR{\scriptstyle b}="b",     
  "a";"b"**\dir{-}?>*\dir{>},         
  (1.5,-5),{\ar@{. }@(l,l)(1.5,6)},
  ?!{"a";"a"+/va(210)/}="a1",
  ?!{"a";"a"+/va(240)/}="a2",
  ?!{"a";"a"+/va(270)/}="a3",
  "a";"a1"**\dir{-},  "a";"a2"**\dir{-},  "a";"a3"**\dir{-},
  (1.5,6),{\ar@{. }@(r,r)(1.5,-5)},
  ?!{"b";"b"+/va(90)/}="b1",
  ?!{"b";"b"+/va(30)/}="b2",
  ?!{"b";"b"+/va(60)/}="b3",
  "b";"b1"**\dir{-},  "b";"b2"**\dir{-},  "b";"b3"**\dir{-},
\end{xy}\ =\ \ -  
\begin{xy}                           
  (0,-2)*+UR{\scriptstyle a}="a",    
  (3,3)*+UR{\scriptstyle b}="b",     
  "a";"b"**\dir{-}?<*\dir{<},         
  (1.5,-5),{\ar@{. }@(l,l)(1.5,6)},
  ?!{"a";"a"+/va(210)/}="a1",
  ?!{"a";"a"+/va(240)/}="a2",
  ?!{"a";"a"+/va(270)/}="a3",
  "a";"a1"**\dir{-},  "a";"a2"**\dir{-},  "a";"a3"**\dir{-},
  (1.5,6),{\ar@{. }@(r,r)(1.5,-5)},
  ?!{"b";"b"+/va(90)/}="b1",
  ?!{"b";"b"+/va(30)/}="b2",
  ?!{"b";"b"+/va(60)/}="b3",
  "b";"b1"**\dir{-},  "b";"b2"**\dir{-},  "b";"b3"**\dir{-},
\end{xy} \\
\text{(Arnold)}\qquad & \qquad
\begin{xy}                           
  (0,-2)*+UR{\scriptstyle a}="a",    
  (3,3)*+UR{\scriptstyle b}="b",   
  (6,-2)*+UR{\scriptstyle c}="c",   
  "a";"b"**\dir{-}?>*\dir{>},         
  "b";"c"**\dir{-}?>*\dir{>},         
  (3,-5),{\ar@{. }@(l,l)(3,6)},
  ?!{"a";"a"+/va(210)/}="a1",
  ?!{"a";"a"+/va(240)/}="a2",
  ?!{"a";"a"+/va(270)/}="a3",
  ?!{"b";"b"+/va(120)/}="b1",
  "a";"a1"**\dir{-},  "a";"a2"**\dir{-},  "a";"a3"**\dir{-},
  "b";"b1"**\dir{-}, "b";(3,6)**\dir{-},
  (3,-5),{\ar@{. }@(r,r)(3,6)},
  ?!{"c";"c"+/va(-90)/}="c1",
  ?!{"c";"c"+/va(-60)/}="c2",
  ?!{"c";"c"+/va(-30)/}="c3",
  ?!{"b";"b"+/va(60)/}="b3",
  "c";"c1"**\dir{-},  "c";"c2"**\dir{-},  "c";"c3"**\dir{-},
  "b";"b3"**\dir{-}, 
\end{xy}\ + \                             
\begin{xy}                           
  (0,-2)*+UR{\scriptstyle a}="a",    
  (3,3)*+UR{\scriptstyle b}="b",   
  (6,-2)*+UR{\scriptstyle c}="c",    
  "b";"c"**\dir{-}?>*\dir{>},         
  "c";"a"**\dir{-}?>*\dir{>},          
  (3,-5),{\ar@{. }@(l,l)(3,6)},
  ?!{"a";"a"+/va(210)/}="a1",
  ?!{"a";"a"+/va(240)/}="a2",
  ?!{"a";"a"+/va(270)/}="a3",
  ?!{"b";"b"+/va(120)/}="b1",
  "a";"a1"**\dir{-},  "a";"a2"**\dir{-},  "a";"a3"**\dir{-},
  "b";"b1"**\dir{-}, "b";(3,6)**\dir{-},
  (3,-5),{\ar@{. }@(r,r)(3,6)},
  ?!{"c";"c"+/va(-90)/}="c1",
  ?!{"c";"c"+/va(-60)/}="c2",
  ?!{"c";"c"+/va(-30)/}="c3",
  ?!{"b";"b"+/va(60)/}="b3",
  "c";"c1"**\dir{-},  "c";"c2"**\dir{-},  "c";"c3"**\dir{-},
  "b";"b3"**\dir{-}, 
\end{xy}\ + \                              
\begin{xy}                           
  (0,-2)*+UR{\scriptstyle a}="a",    
  (3,3)*+UR{\scriptstyle b}="b",   
  (6,-2)*+UR{\scriptstyle c}="c",    
  "a";"b"**\dir{-}?>*\dir{>},         
  "c";"a"**\dir{-}?>*\dir{>},          
  (3,-5),{\ar@{. }@(l,l)(3,6)},
  ?!{"a";"a"+/va(210)/}="a1",
  ?!{"a";"a"+/va(240)/}="a2",
  ?!{"a";"a"+/va(270)/}="a3",
  ?!{"b";"b"+/va(120)/}="b1",
  "a";"a1"**\dir{-},  "a";"a2"**\dir{-},  "a";"a3"**\dir{-},
  "b";"b1"**\dir{-}, "b";(3,6)**\dir{-},
  (3,-5),{\ar@{. }@(r,r)(3,6)},
  ?!{"c";"c"+/va(-90)/}="c1",
  ?!{"c";"c"+/va(-60)/}="c2",
  ?!{"c";"c"+/va(-30)/}="c3",
  ?!{"b";"b"+/va(60)/}="b3",
  "c";"c1"**\dir{-},  "c";"c2"**\dir{-},  "c";"c3"**\dir{-},
  "b";"b3"**\dir{-}, 
\end{xy}\ =\ 0,                             
\end{align*}
where ${a}$, ${b}$, and ${c}$ stand for 
vertices in a graph which is fixed outside of the indicated area. 
($\mathrm{Gr}(V^*)$ itself is closely related to the cofree preLie coalgebra on $V^*$.)

\begin{remark}
	In \cite[Prop.~3.2]{SiWa07} work is restricted to graded, 1-reduced vector
	spaces.  This requirement is needed to have $\mathrm{E}V$ 
	be the cofree Lie coalgebra.  Removing graded, 1-reduced results in
	$\mathrm{E}V$ being the cofree, conilpotent Lie coalgebra.  This follows from
	\cite[Prop.~3.14]{SiWa07}, which is independent of \cite[Prop.~3.2]{SiWa07}.  The 
	difference between cofree Lie coalgebras and cofree, conilpotent Lie coalgebras
	is the presence of infinite graphs, which are not needed in the present
	application.
\end{remark}

The core of the $\mathrm{E}V \cong \mathrm{Gr}(V)/\!\sim$ proof in \cite{SiWa07}, 
and backbone of the current paper, is the configuration
pairing of graphs and trees, introduced in \cite{Sinh06.2} and extended to
$\mathrm{Gr}(V^*)$ and $\mathrm{Tr}(V)$ in \cite{SiWa07}.
Given an isomorphism $\sigma:\mathrm{Vertices}(G) \to \mathrm{Leaves}(T)$,
define $\beta_\sigma:\mathrm{Edges}(G) \to \{\text{internal vertices of $T$}\}$
by sending the edge $\linep{a}{b}$ to the internal vertex closest to the root 
of $T$ on the 
path from leaf $\sigma(a)$ to leaf $\sigma(b)$.
For unlabeled graphs and trees, the $\sigma$-configuration pairing of $G$ and $T$
is 
$$\langle G,\, T\rangle_\sigma \ = \ 
\begin{cases} \displaystyle
	\prod_{e\in\mathrm{E}(G)} \mathrm{sgn}\bigl(\beta_\sigma(e)\bigr) & \text{if $\beta_\sigma$ is surjective,}\\
	\ \ \ 0 & \text{otherwise}
\end{cases}
$$
where $\prod_e$ is a product over all edges of $G$, and 
$\mathrm{sgn}\bigl(\beta_\sigma(\linep{a}{b})\bigr) = \pm 1$ depending on whether 
leaf $\sigma(a)$ is left or right of leaf $\sigma(b)$ in the planar embedding of $T$.

\begin{example}
Following is the map $\beta_\sigma$ for two different isomorphisms $\sigma$ of the
vertices and leaves of a fixed graph and tree.  The different isomorphisms are indicated
by the numbering of the vertices and leaves.
$$\begin{aligned}\begin{xy} 
  (0,-2.5)*+UR{\scriptstyle 1}="a", 
  (3.75,3.75)*+UR{\scriptstyle 2}="b", 
  (7.5,-2.5)*+UR{\scriptstyle 3}="c", 
  "a";"b"**\dir{-}?>*\dir{>} ?(.4)*!RD{\scriptstyle e_1}, 
  "b";"c"**\dir{-}?>*\dir{>} ?(.5)*!LD{\scriptstyle e_2} 
 \end{xy}\end{aligned}\  \xmapsto{\ \beta_{\sigma_1}\ } \ 
 \begin{xy}
   (2,1.5); (4,3.5)**\dir{-}, 
   (0,3.5); (4,-.5)**\dir{-} ?(.5)*!RU{\scriptstyle \beta(e_1)}; 
   (4,-.5); (8,3.5)**\dir{-} ?(.2)*!LU{\scriptstyle \beta(e_2)}, 
   (4,-.5); (4,-2.5)**\dir{-}, 
   (0 ,4.7)*{\scriptstyle 2}, 
   (4,4.7)*{\scriptstyle 1}, 
   (8,4.7)*{\scriptstyle 3}, 
   (2,1.5)*{\scriptstyle \bullet},
   (4,-.5)*{\scriptstyle \bullet},
 \end{xy}  \qquad \qquad \qquad 
 \begin{aligned}\begin{xy} 
  (0,-2.5)*+UR{\scriptstyle 1}="a", 
  (3.75,3.75)*+UR{\scriptstyle 2}="b", 
  (7.5,-2.5)*+UR{\scriptstyle 3}="c", 
  "a";"b"**\dir{-}?>*\dir{>} ?(.4)*!RD{\scriptstyle e_1}, 
  "b";"c"**\dir{-}?>*\dir{>} ?(.5)*!LD{\scriptstyle e_2} 
 \end{xy}\end{aligned}\  \xmapsto{\ \beta_{\sigma_2}\ } \  
 \begin{xy}
   (2,1.5); (4,3.5)**\dir{-}, 
   (0,3.5); (4,-.5)**\dir{-} ?(.8)*!RU{\scriptstyle \beta(e_1)}; 
   (4,-.5); (8,3.5)**\dir{-} ?(.2)*!LU{\scriptstyle \beta(e_2)}, 
   (4,-.5); (4,-2.5)**\dir{-}, 
   (0 ,4.7)*{\scriptstyle 1}, 
   (4,4.7)*{\scriptstyle 3}, 
   (8,4.7)*{\scriptstyle 2}, 
   (4,-.5)*{\scriptstyle \bullet}, 
 \end{xy} $$
 In the first example, $\text{sgn}\bigl(\beta_{\sigma_1}(e_1)\bigr) = -1$ and 
 $\text{sgn}\bigl(\beta_{\sigma_1}(e_2)\bigr) = 1$. 
 In the second example, $\text{sgn}\bigl(\beta_{\sigma_2}(e_1)\bigr) = 1$ and 
 $\text{sgn}\bigl(\beta_{\sigma_2}(e_2)\bigr) = -1$. 
 The associated $\sigma$-configuration pairings are
 $\langle G,\, T\rangle_{\sigma_1} = -1$ and 
 $\langle G,\, T\rangle_{\sigma_2} = 0$.
\end{example}

\begin{definition}\label{D:configuration pairing}
On monomials $\gamma=(G,l_G)\in\mathrm{Gr}(V^*)$ and 
$\tau=(T,l_T)\in\mathrm{Tr}(V)$ 
let
$$\bigl\langle\gamma,\,\tau\bigr\rangle \ = 
\sum_{\sigma:\mathrm{V}(G)\xrightarrow{\cong}\mathrm{L}(T)} 
	\left(\langle G,\,T\rangle_\sigma\, 
	\prod_{v\in \mathrm{V}(G)} \Bigl\langle l_G(v),\,l_T\bigl(\sigma(v)\bigr)\Bigr\rangle\right)$$
where $\sum_\sigma$ is a sum over all isomorphisms $\sigma:\mathrm{Vertices}(G)\to \mathrm{Leaves}(T)$
and $\prod_v$ is a product over all vertices of $G$.  If there are no isomorphisms $\sigma$, then 
$\langle\gamma,\,\tau\rangle = 0$.  The configuration pairing is $\langle\,,\,\rangle$ extended to 
$\mathrm{Gr}(V^*)\times\mathrm{Tr}(V)$ by multilinearity.
\end{definition}

\begin{example}
 The configuration pairing
 $\left\langle 
 \begin{aligned}
	 \graphpp{b^*}{a^*}{b^*}
 \end{aligned},\ 
 \begin{aligned}\begin{xy}   
   (1.5,1.5); (3,3)**\dir{-}, 
   (0,3); (3,0)**\dir{-};
   (6,3)**\dir{-},   
   (3,0); (3,-1.5)**\dir{-}, 
   (-.4,4.2)*{\scriptstyle a}, 
   (3.2,4.2)*{\scriptstyle b},
   (6.8,4.2)*{\scriptstyle b}, 
 \end{xy}\end{aligned}\right\rangle = -2.$
 The isomorphisms 
$$\begin{aligned}\begin{xy} 
  (0,-2.5)*+UR{\scriptstyle 1}="a", 
  (3.75,3.75)*+UR{\scriptstyle 2}="b", 
  (7.5,-2.5)*+UR{\scriptstyle 3}="c", 
  "a";"b"**\dir{-}?>*\dir{>}, 
  "b";"c"**\dir{-}?>*\dir{>} 
 \end{xy}\end{aligned}\  \xmapsto{\ {\sigma_1}\ } \ 
 \begin{xy}
   (2,1.5); (4,3.5)**\dir{-}, 
   (0,3.5); (4,-.5)**\dir{-}, 
   (4,-.5); (8,3.5)**\dir{-}, 
   (4,-.5); (4,-2.5)**\dir{-}, 
   (0 ,4.7)*{\scriptstyle 2}, 
   (4,4.7)*{\scriptstyle 1}, 
   (8,4.7)*{\scriptstyle 3}, 
 \end{xy}  \qquad \text{and} \qquad 
 \begin{aligned}\begin{xy} 
  (0,-2.5)*+UR{\scriptstyle 1}="a", 
  (3.75,3.75)*+UR{\scriptstyle 2}="b", 
  (7.5,-2.5)*+UR{\scriptstyle 3}="c", 
  "a";"b"**\dir{-}?>*\dir{>}, 
  "b";"c"**\dir{-}?>*\dir{>} 
 \end{xy}\end{aligned}\  \xmapsto{\ \sigma_2\ } \  
 \begin{xy}
   (2,1.5); (4,3.5)**\dir{-}, 
   (0,3.5); (4,-.5)**\dir{-},
   (4,-.5); (8,3.5)**\dir{-}, 
   (4,-.5); (4,-2.5)**\dir{-}, 
   (0 ,4.7)*{\scriptstyle 2}, 
   (4,4.7)*{\scriptstyle 3}, 
   (8,4.7)*{\scriptstyle 1}, 
 \end{xy} $$ 
 between $\mathrm{Vertices}(G)$ and $\mathrm{Leaves}(T)$ are the only two which will give 
 $\prod_v\bigl\langle l_G(v),\,l_T(\sigma(v))\bigr\rangle \neq 0$.
 These pair
 $\langle G,\, T\rangle_{\sigma_1} = -1$ and $\langle G,\,T\rangle_{\sigma_2}=-1$. 
 \end{example}

\begin{example}
 The configuration pairing
 $\left\langle 
 \begin{aligned}
	 \graphpp{a^*}{b^*}{b^*}
 \end{aligned},\ 
 \begin{aligned}\begin{xy}   
   (1.5,1.5); (3,3)**\dir{-}, 
   (0,3); (3,0)**\dir{-};
   (6,3)**\dir{-},   
   (3,0); (3,-1.5)**\dir{-}, 
   (-.4,4.2)*{\scriptstyle a}, 
   (3.2,4.2)*{\scriptstyle b},
   (6.8,4.2)*{\scriptstyle b}, 
 \end{xy}\end{aligned}\right\rangle = 1$.
 The isomorphisms 
$$\begin{aligned}\begin{xy} 
  (0,-2.5)*+UR{\scriptstyle 1}="a", 
  (3.75,3.75)*+UR{\scriptstyle 2}="b", 
  (7.5,-2.5)*+UR{\scriptstyle 3}="c", 
  "a";"b"**\dir{-}?>*\dir{>}, 
  "b";"c"**\dir{-}?>*\dir{>} 
 \end{xy}\end{aligned}\  \xmapsto{\ {\sigma_1}\ } \ 
 \begin{xy}
   (2,1.5); (4,3.5)**\dir{-}, 
   (0,3.5); (4,-.5)**\dir{-}, 
   (4,-.5); (8,3.5)**\dir{-}, 
   (4,-.5); (4,-2.5)**\dir{-}, 
   (0 ,4.7)*{\scriptstyle 1}, 
   (4,4.7)*{\scriptstyle 2}, 
   (8,4.7)*{\scriptstyle 3}, 
 \end{xy}  \qquad \text{and} \qquad 
 \begin{aligned}\begin{xy} 
  (0,-2.5)*+UR{\scriptstyle 1}="a", 
  (3.75,3.75)*+UR{\scriptstyle 2}="b", 
  (7.5,-2.5)*+UR{\scriptstyle 3}="c", 
  "a";"b"**\dir{-}?>*\dir{>}, 
  "b";"c"**\dir{-}?>*\dir{>} 
 \end{xy}\end{aligned}\  \xmapsto{\ \sigma_2\ } \  
 \begin{xy}
   (2,1.5); (4,3.5)**\dir{-}, 
   (0,3.5); (4,-.5)**\dir{-},
   (4,-.5); (8,3.5)**\dir{-}, 
   (4,-.5); (4,-2.5)**\dir{-}, 
   (0 ,4.7)*{\scriptstyle 1}, 
   (4,4.7)*{\scriptstyle 3}, 
   (8,4.7)*{\scriptstyle 2}, 
 \end{xy} $$ 
 between $\mathrm{Vertices}(G)$ and $\mathrm{Leaves}(T)$ are the only two which will give 
 $\prod_v\bigl\langle l_G(v),\,l_T(\sigma(v))\bigr\rangle \neq 0$.
 These pair $\langle G,\, T\rangle_{\sigma_1} = 1$ and $\langle G,\,T\rangle_{\sigma_2}=0$. 
\end{example}

\begin{example}
 The configuration pairing
 $\left\langle 
 \begin{aligned}
	 \graphpp{a^*}{b^*}{b^*}
 \end{aligned},\ 
 \begin{aligned}\begin{xy}   
   (1.5,1.5); (3,3)**\dir{-}, 
   (0,3); (3,0)**\dir{-};
   (6,3)**\dir{-},   
   (3,0); (3,-1.5)**\dir{-}, 
   (-.4,4.2)*{\scriptstyle b}, 
   (3.2,4.2)*{\scriptstyle b},
   (6.8,4.2)*{\scriptstyle a}, 
 \end{xy}\end{aligned}\right\rangle = 0$.
 The isomorphisms 
$$\begin{aligned}\begin{xy} 
  (0,-2.5)*+UR{\scriptstyle 1}="a", 
  (3.75,3.75)*+UR{\scriptstyle 2}="b", 
  (7.5,-2.5)*+UR{\scriptstyle 3}="c", 
  "a";"b"**\dir{-}?>*\dir{>}, 
  "b";"c"**\dir{-}?>*\dir{>} 
 \end{xy}\end{aligned}\  \xmapsto{\ {\sigma_1}\ } \ 
 \begin{xy}
   (2,1.5); (4,3.5)**\dir{-}, 
   (0,3.5); (4,-.5)**\dir{-}, 
   (4,-.5); (8,3.5)**\dir{-}, 
   (4,-.5); (4,-2.5)**\dir{-}, 
   (0 ,4.7)*{\scriptstyle 2}, 
   (4,4.7)*{\scriptstyle 3}, 
   (8,4.7)*{\scriptstyle 1}, 
 \end{xy}  \qquad \text{and} \qquad 
 \begin{aligned}\begin{xy} 
  (0,-2.5)*+UR{\scriptstyle 1}="a", 
  (3.75,3.75)*+UR{\scriptstyle 2}="b", 
  (7.5,-2.5)*+UR{\scriptstyle 3}="c", 
  "a";"b"**\dir{-}?>*\dir{>}, 
  "b";"c"**\dir{-}?>*\dir{>} 
 \end{xy}\end{aligned}\  \xmapsto{\ \sigma_2\ } \  
 \begin{xy}
   (2,1.5); (4,3.5)**\dir{-}, 
   (0,3.5); (4,-.5)**\dir{-},
   (4,-.5); (8,3.5)**\dir{-}, 
   (4,-.5); (4,-2.5)**\dir{-}, 
   (0 ,4.7)*{\scriptstyle 3}, 
   (4,4.7)*{\scriptstyle 2}, 
   (8,4.7)*{\scriptstyle 1}, 
 \end{xy} $$ 
 between $\mathrm{Vertices}(G)$ and $\mathrm{Leaves}(T)$ are the only two which will give 
 $\prod_v\bigl\langle l_G(v),\,l_T(\sigma(v))\bigr\rangle \neq 0$.
 These pair $\langle G,\, T\rangle_{\sigma_1} = -1$ and $\langle G,\,T\rangle_{\sigma_2}=1$. 
\end{example}

From \cite{SiWa07}, the configuration pairing vanishes on
the ideal of $\mathrm{Tr}(V)$ generated by 
the anti-symmetry and Jacobi identities and on the coideal of $\mathrm{Gr}(V)$
generated by the arrow-reversing and Arnold identities.  Thus the configuration
pairing descends to a pairing between 
$\mathrm{E}(V^*)$ and $\mathrm{L}(V)$.  
Furthermore we have the following.

\begin{theorem}[3.11 of \cite{SiWa07}]\label{T:perfect pair}
	Let $V$ be a finite dimensional vector space over a field of characterisic zero.
	The configuration pairing of $\mathrm{L}V$ and
	$\mathrm{E}V^*$ is a perfect pairing.
\end{theorem}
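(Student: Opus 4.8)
The plan is to exhibit, for each fixed arity $n$, dual bases on the two sides and check that the pairing matrix relating them is invertible. Since $\mathrm{L}V$ is graded by word length and the configuration pairing clearly respects this grading (an isomorphism $\sigma$ can exist only between a graph and a tree with the same number of vertices/leaves), it suffices to prove that in each weight $n$ the finite-dimensional pairing $\mathrm{L}_nV \times (\mathrm{E}V^*)_n \to k$ is perfect, i.e.\ nondegenerate on both sides. Because $\dim \mathrm{L}_nV = \dim(\mathrm{E}V^*)_n$ (the coalgebra is cofree conilpotent on $V^*$, so its weight-$n$ part is dual to the weight-$n$ part of the free Lie algebra — or one may simply invoke that both have dimension given by the necessklace/Witt formula), nondegeneracy on one side implies it on the other, and it is enough to show the pairing map $\mathrm{L}_nV \to ((\mathrm{E}V^*)_n)^*$ is injective.

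First I would set up the left-normed tree monomials $[\,[\cdots[v_{i_1},v_{i_2}],\ldots],v_{i_n}]$ and observe that by anti-symmetry and Jacobi (which the pairing annihilates, by the quoted result from \cite{SiWa07}) these span $\mathrm{L}_nV$. On the coalgebra side I would use the ``long graph'' / linear-chain monomials --- graphs that are directed paths $a_1^* \to a_2^* \to \cdots \to a_n^*$ --- which span $(\mathrm{Gr}(V^*))_n$ modulo the Arnold and arrow-reversing relations (a standard fact from \cite{SiWa07}, since Arnold lets one rewrite any tree-shaped graph into chains). Then the crux is a combinatorial computation: pair a linear-chain graph $a_1^* \to \cdots \to a_n^*$ against a left-normed bracket in the letters $v_{j_1},\ldots,v_{j_n}$. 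The key step is to show this pairing is, up to sign, a ``counting'' quantity --- essentially it is $\pm 1$ when the two sequences of labels match in a prescribed way and $0$ otherwise --- so that, ordering left-normed brackets and chain graphs compatibly (say by the sequence of labels, lexicographically), the pairing matrix is triangular with $\pm 1$ on the diagonal. Concretely, for the left-normed tree $T_n$ the internal vertex $\beta_\sigma(e)$ attached to an edge out of the deepest leaf is forced, and an inductive analysis on $n$ peeling off the last leaf shows exactly one $\sigma$ contributes for the ``matching'' chain and the sign is a product of the elementary left/right signs, which telescopes to a fixed value.

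The main obstacle I anticipate is precisely this triangularity/sign bookkeeping: one must choose the pairing of a basis of chains on one side with a basis of left-normed brackets on the other so that the resulting square matrix is visibly unitriangular, and verifying that the off-``diagonal'' entries vanish requires controlling which isomorphisms $\sigma$ can give $\beta_\sigma$ surjective when the label multisets agree but the orders differ. An alternative, cleaner route that sidesteps the explicit sign computation is to argue abstractly: the configuration pairing induces a coalgebra map $\mathrm{E}V^* \to (\mathrm{L}V)^\vee = (\text{graded dual of the free Lie algebra})$, and one checks this map is (i) well-defined (the pairing kills the Arnold/arrow-reversing coideal, quoted above), and (ii) surjective onto the primitives in each degree by producing, for each left-normed Lie basis element, a chain graph pairing nontrivially with it --- then since both sides are cofree conilpotent Lie coalgebras cogenerated in degree $1$ by $V^*$ and the map is the identity in degree $1$, it is an isomorphism, which forces the pairing to be perfect.

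I would write the argument in the first (explicit, triangular-matrix) style because it is self-contained and matches the computational emphasis of the paper, invoking Theorem~\ref{T:perfect pair}'s ambient facts from \cite{SiWa07} only for the well-definedness of the pairing on $\mathrm{E}V^* \times \mathrm{L}V$ and for the spanning statements; the heart is then the lemma that $\langle \text{chain}, \text{left-normed bracket}\rangle$ is unitriangular in the label-sequence order, whose proof is the induction on $n$ sketched above. The characteristic-zero hypothesis enters only to ensure the dimension count (Witt's formula, or equivalently the self-duality of $\mathrm{L}V$) is available over the base field, so I would remark on where it is (and is not) needed.
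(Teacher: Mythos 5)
First, a point of orientation: the paper does not prove this statement itself --- it imports it from \cite{SiWa07} --- and the Remark following it observes that the paper's own Theorem~\ref{T:main} (explicit dual bases $\mathcal{G}\mathcal{B}$ and $\mathcal{L}\mathcal{B}$ indexed by Lyndon--Shirshov words, combined with the Witt count of Proposition~\ref{P:Witt}) yields an independent proof valid in any characteristic. Your overall strategy --- reduce to each weight, use the dimension count so that one-sided nondegeneracy suffices, and exhibit a triangular pairing matrix between spanning families --- has exactly that shape, and your closing ``abstract'' alternative (a coalgebra map $\mathrm{E}V^*\to(\mathrm{L}V)^\vee$ which is an isomorphism by cofreeness) is close to what \cite{SiWa07} actually does.

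The gap is in your key lemma. As formulated --- unitriangularity, in lexicographic order of label sequences, of the matrix $\langle a_1^*|\cdots|a_n^*,\ [[\cdots[v_{b_1},v_{b_2}],\ldots],v_{b_n}]\rangle$ over \emph{all} sequences --- it is false, because both spanning families are overcomplete: there are $d^n$ chains and $d^n$ left-normed brackets but only a Witt number's worth of dimensions, so each family is linearly dependent and no square matrix indexed by all words can be invertible. Already for $n=2$ the matrix over $\{xy,yx\}$ is $\left(\begin{smallmatrix}1&-1\\-1&1\end{smallmatrix}\right)$, singular as it must be since $[x,y]=-[y,x]$ and $x^*|y^*=-y^*|x^*$ modulo shuffles; and the entries are not generally $\pm1$ on matching sequences (e.g.\ $\langle x^*|y^*|x^*,[[y,x],x]\rangle=-2$, and the paper's Figures exhibit entries $3$ and $6$). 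The actual mathematical content is therefore the selection of the correct index set --- the Lyndon--Shirshov words --- together with (i) the fact that the corresponding chains span $(\mathrm{E}V^*)_n$ (\cite{Radf79} plus Prop.~3.21 of \cite{SiWa07}) and (ii) triangularity against a matching family of brackets, which is \cite[Thm.~5.1]{Reut93} for the Lyndon bracketing, or Theorems~\ref{T:main} and \ref{P:lower triangular} here. Your ``peel off the last leaf'' induction only computes individual entries (in effect reproving Theorem~\ref{T:bracket/cobracket} and Proposition~\ref{P:config as coeff}); it supplies neither the index set nor the vanishing of the relevant off-diagonal entries, and that is where essentially all the work lies. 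A minor correction: characteristic zero is not what makes the Witt count available --- the free Lie algebra has Witt-number rank over any base ring --- which is precisely why the dimension-count route removes that hypothesis, as the paper's Remark points out.
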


\begin{theorem}[3.14 of \cite{SiWa07}]\label{T:bracket/cobracket}
	Given $\gamma \in \mathrm{E}V^*$ and $[\tau_1, \tau_2] \in \mathrm{L}V$, 
	$$\bigl\langle \gamma,\, [\tau_1, \tau_2]\bigr\rangle =
	 \sum_i \bigl\langle \alpha_i,\, \tau_1\bigr\rangle \,
			\bigl\langle \beta_i,\, \tau_2\bigr\rangle$$
	where $]\gamma[ \ = \sum_i \alpha_i\otimes\beta_i$.
\end{theorem}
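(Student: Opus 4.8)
The plan is to reduce to monomial representatives and then expand both sides directly from the definitions of the configuration pairing (Definition~\ref{D:configuration pairing}), the tree bracket on $\mathrm{Tr}(V)$, and the graph cobracket on $\mathrm{Gr}(V^*)$, matching the two sums term by term. Since the configuration pairing descends to $\mathrm{E}V^*\times\mathrm{L}V$ and both sides of the asserted identity are bilinear in $\gamma$, in $\tau_1$, and in $\tau_2$, it is enough to treat the case where $\gamma=(G,l_G)$ is a monomial graph and $\tau_1=(T_1,l_{T_1})$, $\tau_2=(T_2,l_{T_2})$ are monomial trees, so that $[\tau_1,\tau_2]$ is represented by the tree $T$ got by grafting $T_1$ as the left and $T_2$ as the right subtree of a new root $r$. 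Then $\mathrm{Leaves}(T)=\mathrm{Leaves}(T_1)\sqcup\mathrm{Leaves}(T_2)$, and the internal vertices of $T$ are $r$ together with those of $T_1$ and of $T_2$.

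First I would analyze which vertex--leaf isomorphisms $\sigma:\mathrm{V}(G)\to\mathrm{L}(T)$ contribute to $\langle\gamma,[\tau_1,\tau_2]\rangle$. Such a $\sigma$ determines a partition $\mathrm{V}(G)=V_1\sqcup V_2$ with $V_i=\sigma^{-1}(\mathrm{L}(T_i))$. Because $G$ is connected and acyclic it is a tree, so $|\mathrm{E}(G)|=|\mathrm{V}(G)|-1=|\mathrm{L}(T)|-1$, which equals the number of internal vertices of $T$; hence $\langle G,T\rangle_\sigma\neq 0$ requires $\beta_\sigma$ surjective, hence (equal finite cardinalities) bijective. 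An edge of $G$ with both endpoints in $V_i$ is carried by $\beta_\sigma$ into the internal vertices of $T_i$, while an edge crossing between $V_1$ and $V_2$ is carried to $r$. A counting argument --- the edges of $G$ inside $V_i$ form a forest, so they number $|V_i|-(\text{number of components of } G|_{V_i})$, and they must surject onto the $|\mathrm{L}(T_i)|-1$ internal vertices of $T_i$ --- forces every contributing $\sigma$ to have $G|_{V_1}$ and $G|_{V_2}$ connected and exactly one crossing edge $e_\sigma$. Deleting $e_\sigma$ thus splits $G$ into the subgraphs $G|_{V_1}$ and $G|_{V_2}$, which are precisely the pair $\gamma_1^{\hat e},\gamma_2^{\hat e}$ produced by the cobracket at the edge $e=e_\sigma$, and the remaining data of $\sigma$ is the pair of restrictions $\sigma|_{V_1},\sigma|_{V_2}$.

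Next I would factor the summand attached to $\sigma$. Since the left/right order of two leaves that both lie in $T_i$ is unaffected by the grafting, $\langle G,T\rangle_\sigma=\mathrm{sgn}\bigl(\beta_\sigma(e_\sigma)\bigr)\,\langle G|_{V_1},T_1\rangle_{\sigma|_{V_1}}\,\langle G|_{V_2},T_2\rangle_{\sigma|_{V_2}}$, with $\mathrm{sgn}\bigl(\beta_\sigma(e_\sigma)\bigr)=+1$ exactly when $e_\sigma$ points from the $V_1$-side to the $V_2$-side (i.e.\ toward the right subtree $T_2$) and $-1$ otherwise; moreover the label factor $\prod_{v\in\mathrm{V}(G)}\langle l_G(v),l_T(\sigma(v))\rangle$ splits as the product over $V_1$ times the product over $V_2$. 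Comparing with $]\gamma[\ =\sum_e\bigl(\gamma_1^{\hat e}\otimes\gamma_2^{\hat e}-\gamma_2^{\hat e}\otimes\gamma_1^{\hat e}\bigr)$, in which $e$ points to $\gamma_2^{\hat e}$: a contributing $\sigma$ whose crossing edge points from $V_1$ to $V_2$ is exactly the data of an edge $e$ together with isomorphisms realizing one summand of $\langle\gamma_1^{\hat e},\tau_1\rangle\,\langle\gamma_2^{\hat e},\tau_2\rangle$, with sign $+1$; a contributing $\sigma$ whose crossing edge points from $V_2$ to $V_1$ is the data of one summand of $\langle\gamma_2^{\hat e},\tau_1\rangle\,\langle\gamma_1^{\hat e},\tau_2\rangle$, with sign $-1$. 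One checks that $\sigma\mapsto\bigl(e_\sigma,\sigma|_{V_1},\sigma|_{V_2}\bigr)$ is a bijection from the $\sigma$'s with a single crossing edge and both $G|_{V_i}$ connected onto these triples (the $\sigma$'s not of this form contribute $0$), so summing over $\sigma$ produces $\sum_e\bigl(\langle\gamma_1^{\hat e},\tau_1\rangle\langle\gamma_2^{\hat e},\tau_2\rangle-\langle\gamma_2^{\hat e},\tau_1\rangle\langle\gamma_1^{\hat e},\tau_2\rangle\bigr)$, which is the right-hand side.

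I expect the main obstacle to be the sign bookkeeping in this last step: keeping consistent the orientation convention of the cobracket edge (``$e$ points to $\gamma_2^{\hat e}$''), the planar left/right convention defining $\mathrm{sgn}$, and the choice of $T_1$ versus $T_2$ as the left versus right subtree of $[\tau_1,\tau_2]$, and then verifying that the stated correspondence is genuinely a bijection --- no $\sigma$ double-counted, and every cobracket summand with a nonzero label product actually hit. The structural heart is the counting lemma forcing ``$G|_{V_i}$ connected with a single crossing edge''; once that is in place, the rest is a careful but routine comparison of the two sums.
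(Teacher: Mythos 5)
This statement is quoted from \cite{SiWa07} (their Proposition 3.14) and the paper gives no proof of it, so there is nothing internal to compare against; judged on its own, your argument is correct and complete, and it is essentially the standard proof of that result. The counting step (connectedness and acyclicity of $G$ force each contributing $\sigma$ to restrict to connected subgraphs on $\sigma^{-1}(\mathrm{Leaves}(T_i))$ with exactly one crossing edge) is the structural heart, and your sign analysis --- crossing edge pointing into the right subtree gives $+1$ and matches the cobracket convention that $e$ points to $\gamma_2^{\hat e}$ --- is consistent with the conventions fixed in Section~\ref{S:background}.
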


\begin{remark}
	One corollary of the current work is that we may remove the ``characteristic zero''
	assumption from Theorem~\ref{T:perfect pair} using a dimension argument.
\end{remark}

For more detail on the foundations and interpretations of the configuration pairing,
see \cite{Walt10.2}.

\section{Simple Words}\label{S:B}

Given a finite dimensional vector space $V$, 
choose an ordered basis $v_1<\dots<v_d$ and write $\mathcal{A}$ for
the set of all finite words written using the alphabet $\{v_1,\dots,v_d\}$.  
Let $\mathcal{B}$ be the set of (associative) Lyndon-Shirshov words in $\mathcal{A}$.  
Explicitly, the ordering of its alphabet 
induces a lexicographical ordering on the words $\mathcal{A}$. 
Let $\mathcal{B}$ be the collection of words $\omega \in \mathcal{A}$ where 
$\omega$ has smaller ordering than any of its cyclic permutations.

\begin{example}
	For simplicity, write 1 for $v_1$, 2 for $v_2$, etc.
	\begin{itemize}
		\item $112 \in \mathcal{B}$ but not $121$ or $211$.
		\item $111122, 111212 \in \mathcal{B}$ but not $112112$.
	\end{itemize}
\end{example}

The following proposition is classical, and is proven by a simple counting argument.

\begin{proposition}\label{P:Witt}
	$\mathcal{B}$ satisfies the Witt formula:
	$$\#\{\text{\rm elements of length }n\} = \frac{1}{n} \sum_{m|n} \mu(m)\, d^{\,n/m}$$
	where $\mu$ is the M\"obius function. 
\end{proposition}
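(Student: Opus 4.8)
The plan is to count Lyndon-Shirshov words of length $n$ over a $d$-letter alphabet directly. The key fact I would use is the following standard dichotomy for words: every word $\omega \in \mathcal{A}$ of length $n$ can be written uniquely as $\omega = u^{n/p}$ where $u$ is a \emph{primitive} (i.e. non-periodic) word of length $p$ dividing $n$. First I would recall that a word is Lyndon-Shirshov precisely when it is primitive and is the lexicographically least among all its cyclic rotations; in particular Lyndon-Shirshov words are aperiodic, so each one has exactly $p$ distinct cyclic rotations where $p$ is its length. Conversely, each primitive word of length $p$ has $p$ distinct rotations, exactly one of which is Lyndon-Shirshov. Hence the primitive words of length $p$ are partitioned into rotation classes of size $p$, each containing a unique Lyndon-Shirshov representative, so if $\ell_p$ denotes the number of length-$p$ Lyndon-Shirshov words then the number of primitive words of length $p$ is $p\,\ell_p$.

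Next I would count all words of length $n$ in two ways. On one hand there are $d^n$ of them. On the other hand, grouping by the length $p \mid n$ of the primitive root, we get $d^n = \sum_{p \mid n} (\text{number of primitive words of length } p) = \sum_{p\mid n} p\,\ell_p$. This is a divisor-sum identity of the form $d^n = \sum_{p \mid n} f(p)$ with $f(p) = p\,\ell_p$. Applying M\"obius inversion gives $n\,\ell_n = f(n) = \sum_{m \mid n} \mu(m)\, d^{\,n/m}$, and dividing by $n$ yields the claimed formula.

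The main obstacle — really the only nontrivial point — is establishing cleanly that each cyclic-rotation class of primitive words of length $p$ contains exactly one Lyndon-Shirshov word, and that Lyndon-Shirshov words are automatically primitive. Both follow from the definition: if $\omega = u^k$ with $k>1$ then the rotation by $|u|$ equals $\omega$, so $\omega$ is not strictly smaller than all its rotations, hence not Lyndon-Shirshov; conversely, among the $p$ rotations of a primitive word (which are pairwise distinct, again because primitivity rules out a nontrivial rotational stabilizer), there is a unique lexicographic minimum, and that minimum is by definition Lyndon-Shirshov. Once this bookkeeping is in place, the counting and M\"obius inversion are routine. (Alternatively, one can invoke the well-known bijection between Lyndon words of length $n$ and the necklace-counting function, but the self-contained argument above is short enough to give directly.)
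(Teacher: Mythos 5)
Your proof is correct and is precisely the ``simple counting argument'' the paper invokes without writing out: the paper gives no proof of Proposition~\ref{P:Witt} beyond citing it as classical, and your argument (unique primitive root, one Lyndon--Shirshov representative per rotation class of primitive words, then M\"obius inversion of $d^n=\sum_{p\mid n}p\,\ell_p$) is the standard way to supply it. No gaps; the dichotomy and the distinctness of the $p$ rotations of a primitive word are handled correctly.
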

%\begin{proof}
%	Consider equivalence classes of length $n$ words modulo cyclic permutation.  Each 
%	equivalence class has a unique element of minimal ordering except for
%	those of the form $[\omega\omega\dots\omega]$ i.e. $[\omega^m]$ where $m|n$.  (Note that
%	cyclic permutations of $\omega^m$ still have this form.)  There are $d^n$ words of 
%	length $n$ and there are $d^{\,n/m}$ 
%	words of the form $\omega^m$ for each $m|n$.  Removing words of the form $\omega^m$, 
%	using $\mu(m)$ to account for double counting, leads to the Witt formula. 
%\end{proof}

\begin{definition}
A simple word in an ordered alphabet is 
$\overbrace{x\dots x}^ky_1\dots y_\ell = x^ky_1\dots y_\ell$ where 
$x \neq y_i$ for all $i$ (and $k, \ell>0$).  
Two simple words are compatible if their initial letters
are the same.  
%We give a new order on compatible simple words as follows.
%Let $x^ky_1\dots y_\ell < x^rz_1\dots z_s$ either if $k > r$ or if 
%$k = r$ and $y_1\dots y_\ell < z_1\dots z_s$ in the 
%{\it deg-lex} order.
Compatible simple words are ordered lexicographically via the
ordering of their alphabet.
\end{definition}

Let $\mathcal{A}_0 \subset \mathcal{A}$ be the subset of singleton words and
$\mathcal{A}_1 \subset \mathcal{A}$ be the subset of simple words. 
Recursively define $\mathcal{A}_m\subset \mathcal{A}$ for 
$m>1$ as the simple words in an ordered 
alphabet of compatible words in $\mathcal{A}_{m-1}$.  
%Note that elements of $\mathcal{A}_m$ 
%are compatible if their initial $\mathcal{A}_{m-1}$-subwords are identical.
%Compatible elements of $\mathcal{A}_m$ are ordered by
%$\omega^k\psi_1\dots\psi_\ell < \omega^r\phi_1\dots\phi_s$ either if $k>r$ or if
%$k=r$ and $\psi_1\dots\psi_\ell < \phi_1\dots\phi_s$ in the ordering induced
%from that on $\mathcal{A}_{m-1}$ by viewing strings of elements of $\mathcal{A}_{m-1}$ 
%as numbers.  
Note that to be in $\mathcal{A}_m$, a word must be length at least $2^m$.

\begin{example}\label{E:A_n}
	For clarity, in the following we will
	insert parenthesis in the $\mathcal{A}_2$, $\mathcal{A}_3$, and $\mathcal{A}_4$ examples
	to indicate subwords from lower levels.  
	Suppose the ordered alphabet $\mathcal{A}_0$ is $\{1<2<\cdots<9\}$. 
	\begin{itemize}
		\item $112 \ < \ 12 \ < \ 122 \ < \ 13 \in \mathcal{A}_1$.
		\item $(112)(112)(122) \ < \ (112)(12) \in \mathcal{A}_2$.
		\item $\bigl((112)(112)(12)\bigr)\bigl((112)(1122)\bigr) \ < \  
			\bigl((112)(112)(12)\bigr)\bigl((112)(12)\bigr)\bigl( (112)(12)\bigr) \in \mathcal{A}_3$.
		\item $\Bigl(\bigl( (112)(112)(12)\bigr)\bigl( (112)(12) \bigr)\bigl( (112)(122) \bigr)\Bigr)
			\Bigl(\bigl( (112)(112)(12)\bigr)\bigl( (112)(122)\bigr)\Bigr) \in \mathcal{A}_4$.
	\end{itemize}
%	As the $\mathcal{A}_1$ example above illustrates, the ordering on compatible words is neither the 
%	lexicographical ordering nor the 
%	{\it deg-lex} ordering.
\end{example}

It is quick to check that the $\mathcal{A}_m$ are disjoint and 
$\mathcal{A} = \cup_m \mathcal{A}_m$.   The key fact is that decomposition of a  
word into the form $\omega^k\psi_1\dots\psi_\ell$ with $\omega,\psi_i\in\mathcal{A}_{m-1}$ 
compatible is unique.
When we write $\omega^k\psi_1\dots\psi_\ell\in \mathcal{A}_m$ our implication will be that 
the presented decomposition is the unique decomposition into compatible words of $\mathcal{A}_{m-1}$.
For each $m$
define $\mathcal{B}_m = \mathcal{B}\cap\mathcal{A}_m$.  
The examples from \ref{E:A_n} are all members of $\mathcal{B}$.
The ordering of compatible words is chosen so that the following is true.

\begin{lemma}\label{L:order of B}
$\omega^k\psi_1\dots\psi_\ell \in \mathcal{B}_m$ 
if and only if $\omega, \psi_i\in \mathcal{B}_{m-1}$ 
and $\omega < \psi_i$ for all $i$ 
(in the ordering of compatible words of $\mathcal{A}_{m-1}$).
\end{lemma}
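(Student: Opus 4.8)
The statement is an "if and only if" characterizing which decompositions $\omega^k\psi_1\dots\psi_\ell$ with $\omega,\psi_i\in\mathcal{A}_{m-1}$ land in $\mathcal{B}_m = \mathcal{B}\cap\mathcal{A}_m$. Since membership in $\mathcal{A}_m$ is already granted by the hypothesis that $\omega^k\psi_1\dots\psi_\ell$ is a simple word in the alphabet of compatible $\mathcal{A}_{m-1}$-words, the content is really: such a word is Lyndon-Shirshov (smaller than all its cyclic rotations) precisely when each of its "syllables" $\omega,\psi_1,\dots,\psi_\ell$ is itself Lyndon-Shirshov and $\omega<\psi_i$ for all $i$. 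I would prove this by induction on $m$, with the base case $m=1$ (syllables are single letters, so this is the elementary fact that $x^ky_1\cdots y_\ell$ is Lyndon iff $x<y_i$ for all $i$) handled directly, and then reducing the inductive step to a single clean combinatorial lemma about Lyndon words over an arbitrary ordered alphabet.

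**Key steps.** First I would isolate and prove the following alphabet-level fact: if $s_1,\dots,s_r$ are words over an ordered alphabet (the "syllables"), no one of which is a prefix of another unless equal, then the concatenation $s_1 s_2\cdots s_r$ is a Lyndon word in the base alphabet if and only if the sequence $(s_1,\dots,s_r)$, viewed as a word over the alphabet of syllables with the induced lexicographic order, is Lyndon \emph{and} each $s_i$ is individually Lyndon. The "no-prefix" hypothesis is exactly what the compatibility-plus-simplicity setup gives us: compatible words in $\mathcal{A}_{m-1}$ all begin with the same $\mathcal{A}_{m-2}$-syllable, and by the inductive hypothesis distinct compatible Lyndon words of the same level cannot be prefixes of one another (a Lyndon word is never a proper prefix of another Lyndon word — itself a standard fact worth citing or proving in a line). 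Granting this, the word $\omega^k\psi_1\cdots\psi_\ell$ is Lyndon iff the syllable-word $\omega^k\psi_1\cdots\psi_\ell$ (now literally of the shape $x^k y_1\cdots y_\ell$ over the syllable alphabet) is Lyndon and each syllable $\omega,\psi_i$ is Lyndon; by the $m=1$ base case the former happens iff $\omega<\psi_i$ for all $i$, and by the inductive hypothesis "each syllable is Lyndon" is equivalent to $\omega,\psi_i\in\mathcal{B}_{m-1}$. Assembling these equivalences gives the lemma.

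**The main obstacle.** The real work is the alphabet-level Lyndon-lifting lemma, specifically the direction showing that if the syllable-word is Lyndon and each syllable is Lyndon then the concatenation is Lyndon over the base alphabet. One must show every nontrivial cyclic rotation of $s_1\cdots s_r$ is lexicographically larger. A rotation either cuts at a syllable boundary — handled by the syllable-word being Lyndon, \emph{once} one knows that comparing syllable-words lexicographically is compatible with comparing their base-alphabet concatenations, which is where the no-prefix condition is essential (without it, $s_i<s_j$ as syllables need not imply $s_i\cdots < s_j\cdots$) — or it cuts in the interior of some syllable $s_i = uv$ with $u,v$ nonempty, giving a rotation beginning $v s_{i+1}\cdots s_r s_1\cdots u$; here one uses that $s_i=uv$ is Lyndon so $v > uv = s_i$, combined with the no-prefix property, to conclude the rotation exceeds the original. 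I would spell out this interior-cut case carefully since it is the only genuinely delicate point; the boundary-cut case and the converse direction ($s_1\cdots s_r$ Lyndon $\Rightarrow$ syllable-word Lyndon and each $s_i$ Lyndon) are routine once the comparison-compatibility is in hand. Finally I would remark that uniqueness of the decomposition (already noted in the text before the lemma) is what lets us speak of "the" syllables, so the statement is well-posed.
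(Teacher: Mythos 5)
Your overall strategy (induct on $m$, isolate a ``Lyndon-lifting'' lemma for concatenations of syllables, and feed it the $m=1$ base case $x^ky_1\cdots y_\ell$) is reasonable and considerably more detailed than the paper's own argument, which is only a sketch reducing everything to the coincidence of the recursive ordering on compatible words with lexicographic order in $\mathcal{A}$. But there is a genuine gap at the point where you discharge the hypothesis of your key lemma. The ``standard fact'' you invoke --- that a Lyndon word is never a proper prefix of another Lyndon word --- is false: $1$ is a proper prefix of $12$, and, directly in the setting at hand, $112$ and $1123$ are \emph{compatible} simple words, both in $\mathcal{B}_1$, with $112$ a proper prefix of $1123$. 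Moreover this situation really occurs inside words of $\mathcal{B}_m$: the word $(112)(1123)=1121123$ lies in $\mathcal{B}_2$ and its two syllables are in prefix relation. So the no-prefix hypothesis of your lifting lemma is not available, and the reduction collapses. Nor can the hypothesis simply be dropped: over $\{1<2<3\}$ take syllables $s_1=1$, $s_2=3$, $s_3=12$; each is Lyndon and the syllable word $(1,3,12)$ is Lyndon for the induced order ($1<12<3$), yet the concatenation $1312$ is not Lyndon, since its rotation $1213$ is smaller. Your lemma genuinely needs some control on prefixes, and the control you claim fails in the application.

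The repair is to use what compatibility actually provides rather than a blanket no-prefix condition. All syllables begin with the same letter $x$, and if the concatenation is to be Lyndon then $x$ is the minimal letter occurring. If a syllable $u$ is a proper prefix of a compatible syllable $v$, then $v=uz$ where $z$ begins with a letter different from $x$, hence greater than $x$; on the other hand, inside the concatenation $u$ is always followed by another syllable beginning with $x$. Hence $u\cdot(\text{next syllable})\cdots$ is lexicographically smaller than $v\cdots$, which is exactly the compatibility of syllable comparison with concatenation that your boundary-cut and interior-cut arguments require. You should prove this replacement statement explicitly, by induction up the levels (the same issue recurs when comparing level-$(m-1)$ syllables as words in level-$(m-2)$ syllables), and note that it simultaneously establishes the claim the paper's sketch rests on --- that the recursive ordering of compatible words agrees with lexicographic ordering in $\mathcal{A}$ --- a claim your write-up also uses silently when passing from ``the syllable word is Lyndon iff $\omega<\psi_i$ in the induced order'' to the ordering named in the statement of the lemma.
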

\begin{proof}[Proof Sketch]
	It is enough to show that
	the ordering on compatible words of $\mathcal{A}_{m-1}$ (as words in an alphabet
	of compatible words of $\mathcal{A}_{m-2}$) coincides with their 
	lexicographical ordering as words of $\mathcal{A}$.	  Induct.
%	given $\{\phi_i\}_{i=1}^k\subset\mathcal{B}_{m-1}$ compatible with 
%	$\phi_1\neq \phi_j$, then 
%	$\phi_1\cdots\phi_k < \phi_j\cdots\phi_k\phi_1\cdots \phi_{j-1}$ in the lexicographical 
%	ordering on words of $\mathcal{A}$
%	if and only if $\phi_1 < \phi_j$ in the ordering on compatible words of $\mathcal{B}_{m-1}$.
%
%	In the ``if'' direction, $\omega<\psi_i$ follows from the entire word being 
%	minimal in its cyclic ordering class, and
%	$\omega,\psi_i\in \mathcal{B}_{m-1}$ follows from this as well as the 
%	induction and the implicit statement that $\omega$ 
%	and $\psi_i$ are compatible.  
\end{proof}

\section{The Configuration Basis}\label{S:main}

Define maps $\mathcal{L}:\mathcal{B}\longrightarrow \mathrm{Tr}(V)$
and $\mathcal{G}:\mathcal{B} \longrightarrow \mathrm{Gr}(V^*)$
recursively as follows. 
On $\mathcal{B}_0$, $\mathcal{L}v_i = v_i$ and $\mathcal{G}v_i = v_i^*$ (the graph with a single
vertex labeled by $v_i^*$).  Call the single vertex of $\mathcal{G}v_i$ the pivot vertex. 
For $\omega^k\psi_i\dots\psi_\ell \in \mathcal{B}_m$ define
\begin{align*}
\mathcal{L}:
	\omega^k\psi_1\dots\psi_\ell &\longmapsto
	[[[[[[\mathcal{L}\psi_1,\overbrace{\mathcal{L}\omega], \dots], \mathcal{L}\omega}^k],
	\mathcal{L}\psi_2], \dots], \mathcal{L}\psi_\ell] \\
\mathcal{G}: 
	\omega^k\psi_1\dots\psi_\ell &\longmapsto
	\begin{aligned}\begin{xy}                           
		(0,3)*+UR{\scriptstyle \mathcal{G}\omega}="x1",
		(-6,-2)*+UR{\scriptstyle \cdots}="x2",
		(-12,3)*+UR{\scriptstyle \mathcal{G}\omega}="x3",
		(8,-2)*+UR{\scriptstyle \mathcal{G}\psi_1}="a",  
		(16,3)*+UR{\scriptstyle \mathcal{G}\psi_2}="b",     
		(22,-2)*+UR{\scriptstyle \cdots}="c", 
		(28,3)*+UR{\scriptstyle \mathcal{G}\psi_\ell}="d",  
		"a";"x1"**\dir{-}?>*\dir{>},
		"x1";"x2"**\dir{-}?>*\dir{>},
		"x2";"x3"**\dir{-}?>*\dir{>},
		"a";"b"**\dir{-}?>*\dir{>},    
		"b";"c"**\dir{-}?>*\dir{>},   
		"c";"d"**\dir{-}?>*\dir{>}   
	\end{xy}\end{aligned}	.
%	\begin{aligned}\begin{xy}                           
%		(-2,-6)*+UR{\scriptstyle \mathcal{G}\omega}="x1",
%		(-2,-1)*{\vdots},
%		(-2,2)*+UR{\scriptstyle \mathcal{G}\omega}="x3",
%		(8,-2)*+UR{\scriptstyle \mathcal{G}\psi_1}="a",  
%		(16,3)*+UR{\scriptstyle \mathcal{G}\psi_2}="b",     
%		(22,-2)*+UR{\scriptstyle \cdots}="c", 
%		(28,3)*+UR{\scriptstyle \mathcal{G}\psi_\ell}="d",  
%		"a";"x1"**\dir{-}?>*\dir{>},
%		"a";(1,-2)**\dir{-}?>*\dir{>},
%		"a";"x3"**\dir{-}?>*\dir{>},
%		"a";"b"**\dir{-}?>*\dir{>},    
%		"b";"c"**\dir{-}?>*\dir{>},   
%		"c";"d"**\dir{-}?>*\dir{>}   
%	\end{xy}\end{aligned}	.
\end{align*}
where the subgraph $\mathcal{G}\omega$ appears $k$ times and
the arrows connecting the subgraphs above connect their pivot vertices.
The pivot vertex of the above graph is inherited from $\mathcal{G}\psi_1$.

\begin{example} 
	Following are some examples of $\mathcal{L}\omega$ and $\mathcal{G}\omega$ for 
	$\omega \in \mathcal{B}_1, \mathcal{B}_2, \mathcal{B}_3$.  
	For clarity we will draw the larger bracket expressions
	also as trees when writing $\mathcal{L}$ and we will neglect ${}^*$ 
	when writing $\mathcal{G}$.
	\begin{itemize}
		\item $\mathcal{L}12 = [2,1]$  and $\mathcal{L}112 = \bigl[ [2,1], 1\bigr]$ and
			$\mathcal{L}11234 = \biggl[\Bigl[\bigl[[2,1],1\bigr],3\Bigr],4\biggr] $.

			\noindent
			$\mathcal{G}12 = 
				\begin{aligned}\begin{xy}                           
					(0,2)*+UR{\scriptstyle 1}="x1",
					(4,-2)*+UR{\scriptstyle 2}="a",  
					"a";"x1"**\dir{-}?>*\dir{>},
				\end{xy}\end{aligned}$
			and
			$\mathcal{G}112 = 
				\begin{aligned}\begin{xy}                           
					(-4,-2)*+UR{\scriptstyle 1}="x1",
					(0,2)*+UR{\scriptstyle 1}="x2",
					(4,-2)*+UR{\scriptstyle 2}="a",  
					"x2";"x1"**\dir{-}?>*\dir{>},
					"a";"x2"**\dir{-}?>*\dir{>},
				\end{xy}\end{aligned}$
			and
			$\mathcal{G}11234 = 
				\begin{aligned}\begin{xy}                           
					(-4,-2)*+UR{\scriptstyle 1}="x1",
					(0,2)*+UR{\scriptstyle 1}="x2",
					(4,-2)*+UR{\scriptstyle 2}="a",  
					(8,2)*+UR{\scriptstyle 3}="b",     
					(12,-2)*+UR{\scriptstyle 4}="c",     
					"a";"x2"**\dir{-}?>*\dir{>},
					"x2";"x1"**\dir{-}?>*\dir{>},
					"a";"b"**\dir{-}?>*\dir{>},    
					"b";"c"**\dir{-}?>*\dir{>},    
				\end{xy}\end{aligned}$.

		\noindent
		\item $\mathcal{L}(112)(112)(13)(142) = 
			\Biggl[\biggl[\Bigl[[3,1],\bigl[[2,1],1\bigr]\Bigr],\bigl[ [2,1], 1\bigr]\biggr], 
				\bigl[[4,1],2\bigr]\Biggr] =
			\begin{aligned}\begin{xy}
				(2,0)*{\scriptstyle 3}="a",
				(6,0)*{\scriptstyle 1}="b",
				(4,-2);"a"**\dir{-},
				(4,-2);"b"**\dir{-},
				(10,0)*{\scriptstyle 2}="c",
				(14,0)*{\scriptstyle 1}="d",
				(16,-2)*{\scriptstyle 1}="e",
				(12,-2);"c"**\dir{-},
				(12,-2);"d"**\dir{-},
				(12,-2);(14,-4)**\dir{-},
				(14,-4);"e"**\dir{-},
				(14,-4);(9,-6)**\dir{-},
				(20,-2)*{\scriptstyle 2}="f",
				(24,-2)*{\scriptstyle 1}="g",
				(26,-4)*{\scriptstyle 1}="h",
				(22,-4);"f"**\dir{-},
				(22,-4);"g"**\dir{-},
				(22,-4);(24,-6)**\dir{-},
				(24,-6);"h"**\dir{-},
				(4,-2);(14,-10)**\dir{-},
				(24,-6);(14,-10)**\dir{-},
				(14,-10);(19,-14)**\dir{-},
				(19,-14);(34,-8)**\dir{-},
				(30,-4)*{\scriptstyle 4}="i",
				(34,-4)*{\scriptstyle 1}="j",
				(36,-6)*{\scriptstyle 2}="k",
				(32,-6);"j"**\dir{-},
				(34,-8);"i"**\dir{-},
				(34,-8);"k"**\dir{-}
			\end{xy}\end{aligned}$.

		\noindent
		$\mathcal{G}(112)(112)(13)(142) = 
			\begin{aligned}\begin{xy}
					(5,7)*+UR{\scriptstyle 1}="x11",
					(0,9)*+UR{\scriptstyle 1}="x31",
					(7,2)*+UR{\scriptstyle 2}="a1",  
					"a1";"x11"**\dir{-}?>*\dir{>},
					"x11";"x31"**\dir{-}?>*\dir{>},
					(-5,5)*+UR{\scriptstyle 1}="x12",
					(-10,7)*+UR{\scriptstyle 1}="x32",
					(-3,0)*+UR{\scriptstyle 2}="a2",  
					"a2";"x12"**\dir{-}?>*\dir{>},
					"x12";"x32"**\dir{-}?>*\dir{>},
					(17,0)*+UR{\scriptstyle 3}="b",
					(15,5)*+UR{\scriptstyle 1}="c",
					"b";"a1"**\dir{-}?>*\dir{>},
					"a1";"a2"**\dir{-}?>*\dir{>},
					"b";"c"**\dir{-}?>*\dir{>},
					(27,2)*+UR{\scriptstyle 4}="d",
					(25,7)*+UR{\scriptstyle 1}="e",
					(31,-2)*+UR{\scriptstyle 2}="f",
					"b";"d"**\dir{-}?>*\dir{>},
					"d";"e"**\dir{-}?>*\dir{>},
					"d";"f"**\dir{-}?>*\dir{>},
			\end{xy}\end{aligned}$.

		\item $\mathcal{L}\bigl( (12) (13)\bigr)\bigl( (12)(14)\bigr) =
			\Bigl[\bigl[[4,1], [2,1]\bigr], \bigl[[3,1], [2,1]\bigr]\Bigl] =
			\begin{aligned}\begin{xy}
				(0,0)*{\scriptstyle 4}="a",
				(4,0)*{\scriptstyle 1}="b",
				(8,0)*{\scriptstyle 2}="c",
				(12,0)*{\scriptstyle 1}="d",
				(16,0)*{\scriptstyle 3}="e",
				(20,0)*{\scriptstyle 1}="f",
				(24,0)*{\scriptstyle 2}="g",
				(28,0)*{\scriptstyle 1}="h",
				"a";(2,-2)**\dir{-},
				"b";(2,-2)**\dir{-},
				"c";(10,-2)**\dir{-},
				"d";(10,-2)**\dir{-},
				"e";(18,-2)**\dir{-},
				"f";(18,-2)**\dir{-},
				"g";(26,-2)**\dir{-},
				"h";(26,-2)**\dir{-},
				(2,-2);(6,-4)**\dir{-},
				(10,-2);(6,-4)**\dir{-},
				(18,-2);(22,-4)**\dir{-},
				(26,-2);(22,-4)**\dir{-},
				(6,-4);(14,-6)**\dir{-},
				(22,-4);(14,-6)**\dir{-}
			\end{xy}\end{aligned}$.

		\noindent
		$\mathcal{G}\bigl( (12) (13)\bigr)\bigl( (12)(14)\bigr) =
			\begin{aligned}\begin{xy}
				(0,0)*+UR{\scriptstyle 1}="000",
				(6,-3)*+UR{\scriptstyle 2}="001",
				(12,0)*+UR{\scriptstyle 1}="010",
				(18,-3)*+UR{\scriptstyle 3}="011",
				(4,-7)*+UR{\scriptstyle 1}="100",
				(10,-10)*+UR{\scriptstyle 2}="101",
				(16,-7)*+UR{\scriptstyle 1}="110",
				(22,-10)*+UR{\scriptstyle 4}="111",
				"111";"011"**\dir{-}?>*\dir{>},
				"111";"101"**\dir{-}?>*\dir{>},
				"111";"110"**\dir{-}?>*\dir{>},
				"101";"100"**\dir{-}?>*\dir{>},
				"011";"010"**\dir{-}?>*\dir{>},
				"011";"001"**\dir{-}?>*\dir{>},
				"001";"000"**\dir{-}?>*\dir{>},
			\end{xy}\end{aligned}$.
	\end{itemize}
\end{example}

Write $\mathcal{G}\mathcal{B}$ and $\mathcal{L}\mathcal{B}$ for the images of 
the set of Lyndon words 
under $\mathcal{G}$ and $\mathcal{L}$.

\begin{theorem}\label{T:main}
	Bracket expressions $\mathcal{L}\mathcal{B}$ and graph expressions 
	$\mathcal{G}\mathcal{B}$ are dual vector space bases for the free
	Lie algebra $\mathrm{L}V$
	and the cofree conilpotent
	Lie coalgebra $\mathrm{E}V^*$.
\end{theorem}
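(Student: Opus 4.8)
The plan is to show that $\mathcal{L}\mathcal{B}$ and $\mathcal{G}\mathcal{B}$ are \emph{dual} families under the configuration pairing of Theorem~\ref{T:perfect pair}, and then deduce that both are bases by a counting argument. Since the configuration pairing is perfect (Theorem~\ref{T:perfect pair}), once we know $\langle \mathcal{G}\omega,\, \mathcal{L}\omega' \rangle = \delta_{\omega,\omega'}$ for $\omega,\omega'\in\mathcal{B}$ of the same word-length $n$, it follows that the finite set $\{\mathcal{L}\omega : |\omega| = n\}$ is linearly independent in the degree-$n$ part of $\mathrm{L}V$, and likewise $\{\mathcal{G}\omega : |\omega| = n\}$ in $\mathrm{E}V^*$; by Proposition~\ref{P:Witt} the number of Lyndon-Shirshov words of length $n$ equals $\dim (\mathrm{L}V)_n = \dim(\mathrm{E}V^*)_n$, so both families span and are bases. (This is also where the "characteristic zero'' hypothesis gets removed, as promised in the remark following Theorem~\ref{T:perfect pair}: over any field the dimension count plus the integrality of the pairing values forces a basis.)

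The heart of the matter is therefore the biorthogonality statement
$$\bigl\langle \mathcal{G}\omega,\ \mathcal{L}\omega' \bigr\rangle = \delta_{\omega,\omega'}, \qquad \omega,\omega'\in\mathcal{B},\ |\omega|=|\omega'|.$$
I would prove this by induction on the grading level $m$ with $\omega\in\mathcal{B}_m$, and within that on word-length. The base case $m=0$ is immediate since $\langle v_i^*,\, v_j\rangle = \delta_{ij}$. For the inductive step write $\omega = \phi^k\psi_1\cdots\psi_\ell\in\mathcal{B}_m$ with $\phi<\psi_1\le\cdots\le\psi_\ell$ in $\mathcal{B}_{m-1}$ (Lemma~\ref{L:order of B}). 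The key tool is Theorem~\ref{T:bracket/cobracket}: since $\mathcal{L}\omega$ is an iterated left-normed bracket, we can peel off the outermost bracket $[\,\cdot\,,\mathcal{L}\psi_\ell]$ and compute $\langle \mathcal{G}\omega',\mathcal{L}\omega\rangle$ as a sum $\sum_i \langle\alpha_i, [[[\mathcal{L}\psi_1,\mathcal{L}\phi],\dots],\mathcal{L}\psi_{\ell-1}]\rangle\langle\beta_i,\mathcal{L}\psi_\ell\rangle$ over the cobracket $]\mathcal{G}\omega'[\,=\sum_i\alpha_i\otimes\beta_i$. The cobracket of a graph $\mathcal{G}\omega'$ is a sum over edges; the crucial combinatorial observation is that the recursive graph construction $\mathcal{G}$ produces a graph whose edge set is organized so that cutting an edge either detaches one of the top-level $\mathcal{G}\psi_j$ pieces or cuts inside one of them, and by the inductive hypothesis on lower levels only the "right'' cuts survive the pairing with the left-normed pieces $\mathcal{L}\psi_j$. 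One then shows that the only surviving term, after all peeling, forces $\omega'$ to have the same top-level decomposition as $\omega$ with $\mathcal{G}\psi_j'$ pairing nontrivially only against $\mathcal{L}\psi_j$ — recursing down to equality $\omega'=\omega$ and total pairing value $1$.

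The main obstacle, and where the real work lies, is the bookkeeping in that inductive step: one must verify that the repeated $\mathcal{G}\phi$ factors (appearing $k$ times, all equal and all less than the $\psi_j$) contribute exactly the right multiplicity $1$ rather than $k!$ or $0$, and that the surjectivity condition on $\beta_\sigma$ in the configuration pairing kills all the "crossed'' isomorphisms. Concretely, I expect one needs a sharper lemma first: that for $\omega\in\mathcal{B}_m$ with decomposition $\phi^k\psi_1\cdots\psi_\ell$, the pairing $\langle\mathcal{G}\omega,\mathcal{L}\tau\rangle$ against an \emph{arbitrary} left-normed tree $\mathcal{L}\tau$ vanishes unless $\tau$ "refines compatibly'' with $\omega$'s decomposition, so that one can strip off $\mathcal{L}\psi_\ell$, then $\mathcal{L}\psi_{\ell-1}$, etc., one bracket at a time, each time using Theorem~\ref{T:bracket/cobracket} and the inductive hypothesis; the $\phi^k$ block is handled by an internal sub-induction on $k$ exploiting that $\mathcal{G}\phi$ is the minimal compatible subword. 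Once that structural lemma is in hand, biorthogonality and hence the theorem follow. Establishing the lemma carefully — in particular ruling out unwanted surviving terms coming from cutting edges \emph{between} repeated $\mathcal{G}\phi$'s or between $\mathcal{G}\phi$ and $\mathcal{G}\psi_1$ — is the delicate part of the argument.
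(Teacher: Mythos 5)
Your overall architecture (biorthogonality $\langle\mathcal{G}\omega,\mathcal{L}\omega'\rangle=\delta_{\omega,\omega'}$ plus the Witt count from Proposition~\ref{P:Witt}) matches the paper's, but the mechanism you propose for biorthogonality is genuinely different. The paper works bottom-up directly from Definition~\ref{D:configuration pairing}: it fixes a single bijection $\sigma$ contributing a nonzero term to $\langle\mathcal{G}\omega,\mathcal{L}\upsilon\rangle$ and shows, starting from the innermost brackets $[y_{i,1},x]$ of $\mathcal{L}\upsilon$ together with the orientation and valence constraints of graphs in the image of $\mathcal{G}$, that surjectivity of $\beta_\sigma$ forces each maximal left-normed block of $\mathcal{L}\upsilon$ to match a subgraph of $\mathcal{G}\omega$ of the shape $\mathcal{G}(x^ky_{i,1}\cdots y_{i,\ell_i})$; this identifies the $\mathcal{A}_1$-subword decompositions of $\omega$ and $\upsilon$, and the argument then recurses up the grading $\mathcal{A}_m$ to force $\omega=\upsilon$. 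You instead go top-down, peeling outermost brackets with Theorem~\ref{T:bracket/cobracket} and inducting on $m$. One small point: you do not need Theorem~\ref{T:perfect pair} at all --- biorthogonality by itself gives linear independence of both families (pair a vanishing linear combination against the dual family), and leaning on the perfect-pairing theorem would partly undercut the paper's remark that the present result removes its characteristic-zero hypothesis.

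The substantive issue is that your induction hypothesis (biorthogonality at level $m-1$) does not by itself control the terms of $]\mathcal{G}\omega'[$ obtained by cutting an edge \emph{inside} one of the top-level pieces $\mathcal{G}\psi_j$ or $\mathcal{G}\phi$, nor the terms where the detached subgraph is a union of several pieces: such cut-subgraphs are generally not of the form $\mathcal{G}\psi$ for any $\psi\in\mathcal{B}_{m-1}$, so the inductive hypothesis cannot be cited to kill them. You correctly flag that a sharper structural lemma is needed (vanishing of $\langle\mathcal{G}\omega,\mathcal{L}\tau\rangle$ unless $\tau$ refines $\omega$'s decomposition, plus the multiplicity-one statement for the $\phi^k$ block), but that lemma is where essentially all of the content of the theorem lives, and you have not supplied an argument for it. As written the proposal is a credible plan rather than a proof; to complete it you would need either to prove that lemma by an edge-by-edge analysis of $\mathcal{G}\omega$ (at which point you are close to reproducing the paper's direct $\beta_\sigma$-surjectivity argument) or to strengthen the induction to a statement about arbitrary connected subgraphs of graphs in $\mathcal{G}\mathcal{B}$.
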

\begin{proof}
%	From Proposition~\ref{P:Witt} and the fact that $\mathcal{L}$, $\mathcal{G}$ are
%	injections to the set of bracket and graph expressions, we know that the
%	number of bracket and graph expressions in $\mathcal{L}\mathcal{B}$ and 
%	$\mathcal{G}\mathcal{B}$ of each length is equal to the dimensions of the 
%	vector subspaces of $\mathrm{L}V$ and
%	$\mathrm{E}V^*$ of expressions of those lengths.  
	Applying Proposition~\ref{P:Witt}, it is enough to 
	show that $\mathcal{G}\mathcal{B}$ and $\mathcal{L}\mathcal{B}$ pair perfectly
	to prove both that the sets $\mathcal{G}\mathcal{B}$ and $\mathcal{L}\mathcal{B}$
	are each independent, and thus bases, and also that they are dual bases.

	Let $\upsilon, \omega \in \mathcal{B}$, and suppose 
	$\langle \mathcal{G}\omega, \mathcal{L}\upsilon\rangle \neq 0$.
	Fix a bijection
	$\sigma:\mathrm{Vertices}(\mathcal{G}\omega) \xrightarrow{\ \cong\ }
	\mathrm{Leaves}(\mathcal{L}\upsilon)$
	so that its term in the configuration pairing is nonzero.
	Note that $\sigma$ induces a bijection between the letters of $\upsilon$ 
	and those of $\omega$ with repetition.
	Let $x$ be the minimal letter in $\upsilon$ and $\omega$.  This letter
	is also their initial letter, since they are words in $\mathcal{B}$.

	From the definition of $\mathcal{L}$, the innermost brackets of 
	$\mathcal{L}\upsilon$ are of the form $[y_{i,1},x]$ for some letters $y_{i,1}$.
	There must be corresponding edges in 
	$\mathcal{G}\omega$ between vertices labeled $x^*$ and $y^*_{i,1}$ corresponding under $\sigma$. 
	From the definition of 
	$\mathcal{G}$, these edges must be 
	$\begin{aligned}\begin{xy}                           
		(5,2)*{\scriptstyle x^*}="x3",
		(12,-2)*{\scriptstyle y^*_{i,1}}="a",  
		"a";"x3"**\dir{-}?>*\dir{>}
	\end{xy}\end{aligned}$.

	Further, $\mathcal{L}\upsilon$ breaks into a series of maximal length bracket
	expressions 
	$[ [ [ [ y_{i,1}, x],\dots x], y_{i,2}], \dots y_{i,\ell_i}]$ with 
	$x\neq y_{i,j}\in \mathcal{A}_0$.
	A nonzero pairing with $[ [ y_{i,1}, x], x]$ implies that there is an edge
	from the graph 
	$\begin{aligned}\begin{xy}                           
		(0,2)*{\scriptstyle x^*}="x3",
		(6,-2)*{\scriptstyle y^*_{i,1}}="a",  
		"a";"x3"**\dir{-}?>*\dir{>}
	\end{xy}\end{aligned}$	to a vertex labeled $x^*$ in 
	$\mathcal{G}\omega$.   Since non-$x^*$ vertices are connected to at most
	one $x^*$ in graphs $\mathcal{G}\omega$, this edge must be
	$\begin{aligned}\begin{xy}                           
		(0,-2)*+U{\scriptstyle x^*}="x2",
		(6,2)*+U{\scriptstyle x^*}="x3",  
		(12,-2)*{\scriptstyle y^*_{i,1}}="a",
		"a";"x3"**\dir{-}?>*\dir{>},
		"x3";"x2"**\dir{-}?>*\dir{>}
	\end{xy}\end{aligned}$.
	Continuing in this manner, the bracket expression 
	$[ [y_{i,1}, x], \dots x]$ must correspond to a subgraph
	$\begin{aligned}\begin{xy}                           
		(-6,2)*{\scriptstyle x^*}="x1",
		(0,-2)*+UR{\scriptstyle \cdots}="x2",
		(6,2)*+U{\scriptstyle x^*}="x3",
		(12,-2)*{\scriptstyle y^*_{i,1}}="a",  
		"x2";"x1"**\dir{-}?>*\dir{>},
		"x3";"x2"**\dir{-}?>*\dir{>},
		"a";"x3"**\dir{-}?>*\dir{>}
	\end{xy}\end{aligned}	$.  
	
	The next bracket, with $y_{i,2}$, implies that there
	must be an edge from this subgraph to a vertex labeled $y_{i,2}^*$ 
	of $\mathcal{G}\omega$.
	From the structure of $\mathcal{G}$, the edge cannot connect to an 
	$x^*$, so it must connect to $y_{i,1}^*$.
	The next bracket, with $y_{i,3}$, implies that there is an edge from this subgraph to 
	the corresponding vertex $y_{i,3}^*$ in $\mathcal{G}\omega$.  
	If the edge came from $y_{i,1}^*$, then the structure of $\mathcal{G}\omega$ implies that one of the 
	vertices $y_{i,2}^*$ and $y_{i,3}^*$ would also have an edge to a vertex $x^*$.  This is not 
	possible, because this vertex would correspond to an $x$ in some other maximal length bracket expression
	$[ [ [ [y_{j,1}, x],\dots x], y_{j,2}],\dots,y_{j,\ell_j}]$ of $\mathcal{L}\upsilon$, implying the presence 
	of a subgraph
	$\begin{aligned}\begin{xy}                           
		(-12,-2)*{\scriptstyle y^*_{i,k}}="c",
		(-6,2)*+U{\scriptstyle x^*}="x1",
		(0,-2)*+UR{\scriptstyle \cdots}="x2",
		(6,2)*+U{\scriptstyle x^*}="x3",
		(12,-2)*{\scriptstyle y^*_{j,1}}="a",  
		"c";"x1"**\dir{-},
		"x2";"x1"**\dir{-}?>*\dir{>},
		"x3";"x2"**\dir{-}?>*\dir{>},
		"a";"x3"**\dir{-}?>*\dir{>}
	\end{xy}\end{aligned}$ in $\mathcal{G}\omega$.

	The next bracket, with $y_{i,4}$, implies an edge in $\mathcal{G}\omega$ 
	from this subgraph to a corresponding
	$y_{i,4}^*$. The edge cannot connect to $x^*$ or $y_{i,1}^*$ for the reasons already stated and 
	it cannot connect to $y_{i,2}^*$ because non-pivot vertices in $\mathcal{G}\omega$ are at
	most bivalent.
	Continuing in this way, we get that each maximal length bracket expression in 
	$\mathcal{L}\upsilon$
	must correspond to a subgraph 
	$\begin{aligned}\begin{xy}                           
		(-12,2)*+U{\scriptstyle x^*}="x1",
		(-6,-2)*+UR{\scriptstyle \cdots}="x2",
		(0,2)*+U{\scriptstyle x^*}="x3",
		(8,-2)*{\scriptstyle y_{i,1}^*}="a",  
		(18,2)*+U{\scriptstyle y_{i,2}^*}="b",     
		(25,-2)*+UR{\scriptstyle \cdots}="c", 
		(32,2)*+U{\scriptstyle y_{i,\ell_i}^*}="d",  
		"x2";"x1"**\dir{-}?>*\dir{>},
		"x3";"x2"**\dir{-}?>*\dir{>},
		"a";"x3"**\dir{-}?>*\dir{>},
		"a";"b"**\dir{-}?>*\dir{>},    
		"b";"c"**\dir{-}?>*\dir{>},   
		"c";"d"**\dir{-}?>*\dir{>}   
	\end{xy}\end{aligned}	$ of $\mathcal{G}\omega$.  Both the bracket expression and 
	the subgraph expression above correspond to an $\mathcal{A}_1$ subword 
	$x^ky_{i,1}\dots y_{i,\ell_i}$ of
	$\upsilon$ and $\omega$.

	Thus $\sigma$ gives a bijection of $\mathcal{A}_1$ subwords of $\omega$
	and $\upsilon$.  Let $\xi$ be the minimal $\mathcal{A}_1$ subword of 
	$\upsilon$ and $\omega$.  By Lemma~\ref{L:order of B}, this $\mathcal{A}_1$ 
	subword is also their 
	initial $\mathcal{A}_1$ subword. 
	Continue by induction (take the $\mathcal{A}_1$ subwords as the alphabet and look 
	at bracket expressions
	and graphs of them, etc).  Since $\upsilon$ is finite, it is in $\mathcal{B}_m$ 
	for some finite $m$.  At this stage, we will have identified $\omega = \upsilon$.

	It remains only to show that $\langle \mathcal{G}\omega, \mathcal{L}\omega\rangle = 1$.
	But this is clear.  Recursively apply the calculation
	$\bigl\langle\mathcal{G}x^ky_1\dots y_\ell,\; \mathcal{L}x^ky_1\dots y_\ell\bigr\rangle = 1$
	using the fact, established above, that any bijection $\sigma$ yielding a nonzero 
	term in the configuration pairing $\langle \mathcal{G}\omega,\, \mathcal{L}\upsilon\rangle$ 
	must give a bijection of $A_m$ subwords of $\omega$ and $\upsilon$.
\end{proof}

\begin{definition}
	$\mathcal{L}\mathcal{B}$ is the configuration basis of the free Lie algebra $\mathrm{L}V$.
\end{definition}

Theorem~\ref{T:main} has the following corollary, 
similar to the situation for Hall bases of Lie
algebras.

\begin{corollary}\label{C:integer coefficients}
	Lie bracket expressions, when written in terms of the configuration basis, 
	have integer coefficients.
\end{corollary}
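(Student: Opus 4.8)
The plan is to read off each coefficient directly as a configuration pairing. By Theorem~\ref{T:main}, the sets $\mathcal{L}\mathcal{B}$ and $\mathcal{G}\mathcal{B}$ are dual bases of $\mathrm{L}V$ and $\mathrm{E}V^*$, so $\langle \mathcal{G}\omega,\,\mathcal{L}\upsilon\rangle = \delta_{\omega,\upsilon}$ for $\omega,\upsilon\in\mathcal{B}$. Hence if $\tau\in\mathrm{L}V$ is written in the configuration basis as $\tau = \sum_{\omega\in\mathcal{B}} c_\omega\,\mathcal{L}\omega$, then applying the linear functional $\langle\mathcal{G}\upsilon,\,-\rangle$ to both sides gives $c_\upsilon = \langle\mathcal{G}\upsilon,\,\tau\rangle$. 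So it is enough to show $\langle\mathcal{G}\upsilon,\,\tau\rangle\in\mathbb{Z}$ whenever $\tau$ is a Lie bracket expression, i.e. an iterated bracket of the generators $v_1,\dots,v_d$ (and then, by bilinearity of the pairing, for any $\mathbb{Z}$-linear combination of such).

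First I would note that an iterated bracket of generators is, under the isomorphism $\mathrm{L}V\cong\mathrm{Tr}(V)/\!\sim$ recalled before Theorem~\ref{T:perfect pair}, the class of a single tree monomial $\tau=(T,l_T)$ whose leaves carry labels from $\{v_1,\dots,v_d\}$; no scalars occur. Likewise the recursion defining $\mathcal{G}$ produces an honest graph monomial $\mathcal{G}\upsilon=(G,l_G)$ with vertices labeled by $\{v_1^*,\dots,v_d^*\}$, never a linear combination. Now Definition~\ref{D:configuration pairing} expresses $\langle\mathcal{G}\upsilon,\,\tau\rangle$ as a finite sum over bijections $\sigma$ from the vertices of $G$ to the leaves of $T$, each summand being $\langle G,\,T\rangle_\sigma\in\{-1,0,1\}$ times $\prod_v\langle l_G(v),\,l_T(\sigma(v))\rangle$, and each factor $\langle v_j^*,\,v_i\rangle$ of this product equals $\delta_{ij}\in\{0,1\}$. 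So every summand lies in $\{-1,0,1\}$ and the total is an integer. (Alternatively, one can induct on the length of the bracket expression using Theorem~\ref{T:bracket/cobracket}, together with the fact that the cobracket of a graph monomial is an integer combination of graph monomials.)

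The one point deserving care — and the only place where something could in principle go wrong — is that the computation above uses a chosen tree representative $T$, whereas the bracket expression only specifies its class modulo antisymmetry and Jacobi. This is harmless: the configuration pairing vanishes on the ideal generated by those relations (recalled before Theorem~\ref{T:perfect pair}), so $\langle\mathcal{G}\upsilon,\,\tau\rangle$ depends only on the image of $\tau$ in $\mathrm{L}V$, and hence so does each $c_\omega$. In short, no division ever enters: the dual-basis coefficient is literally the configuration pairing of two monomials, which by construction is a signed count of bijections, hence an integer.
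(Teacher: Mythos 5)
Your proposal is correct and follows exactly the paper's argument: the coefficient of $\mathcal{L}\omega$ is read off as $\langle\mathcal{G}\omega,\,\ell\rangle$ via the dual basis of Theorem~\ref{T:main}, and the configuration pairing of graph and tree monomials is an integer by Definition~\ref{D:configuration pairing}. Your added remarks on well-definedness modulo antisymmetry and Jacobi are sound but not strictly necessary, since the pairing is already known to descend to $\mathrm{E}V^*\times\mathrm{L}V$.
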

\begin{proof}
	The Lie bracket expression $\ell$ will have $\mathcal{L}\omega$ coefficient
	$\langle \mathcal{G}\omega,\, \ell\rangle$.
	The configuration pairing of a graph and tree is always an integer.
\end{proof}

\section{Examples and Computations}\label{S:examples}

For simplicity we focus on the vector subspaces of $\mathrm{L}V$ 
where $d$ basis elements are repeated
$n_1,\dots,n_d$ times.  
Classically, a
counting argument on $\mathcal{B}$ similar to Proposition~\ref{P:Witt}
recovers the fine Witt formula 
$$\mathrm{dim}\left\{
\begin{array}{l}
	\text{\rm Lie brackets of $v_1,\dots,v_d$} \\
	\text{\rm with repetitions $n_1,\dots,n_d$}
\end{array}
\right\} 
\ =	\ \frac{1}{n}\!\sum_{m|(n_1,\dots,n_d)} \!\mu(m)\, 
		\frac{(\frac{n}{m})!}{(\frac{n_1}{m})!\cdots(\frac{n_d}{m})!}$$
where $n = \sum_i n_i$.
For comparison with other Lie bases and for use in later examples, 
we record the configuration basis for 
some of these vector subspaces along with  
associated elements of $\mathcal{B}$.
In the examples below, suppose $V$ has basis $x < y < z$.

\begin{example}\label{E:2x2y2z}
	The vector subspace of bracket expressions of 
	$x, y, z$ with each repeated twice has basis given by
	the following fourteen elements.
\begin{itemize}
	\item 
		$xxyyzz \mapsto [ [ [ [ [y, x], x], y], z], z]$.

	\noindent
		The five words 
		$xxyzyz$, $xxyzzy$, $xxzyyz$, $xxzyzy$, and $xxzzyy$ also have $\mathcal{L}$ of this form.
	\item 
		$xyxyzz \mapsto [ [ [ [y, x], z], z], [y, x] ]$

	\noindent
		The three words 
		$xyxzyz$, $xyxzzy$, and $xzxzyy$ also have $\mathcal{L}$ of this form.

	\item 
		$xyyzxz \mapsto [ [z, x], [ [ [y, x], y], z] ]$ 

		\noindent
		$\mathcal{L}xyzyxz$ is similar.

	\item 
		$xyyxzz \mapsto [ [ [z, x], z], [ [y, x], y] ]$.

		\noindent 
		$\mathcal{L}xyzxzy$ is similar. 
\end{itemize}
\end{example}

%\begin{remark}
%	The previous example exhibits a generic property of the configuration basis for Lie algebra
%	subspaces where many basis elements occur with repetition.  
%	The configuration basis in this case consists of only  
%	a few different bracket shapes, with fixed positions for the lowest ordered generators.
%	For example 
%	$[ [ [ [ [ [\cdot, x], x], \cdot], \cdot], \cdot], \cdot]$,
%	$[ [ [ [\cdot, x], \cdot], \cdot], [\cdot, x] ]$, and 
%	$[ [ [\cdot, x], \cdot], [ [\cdot, x], \cdot] ]$ above.  For most of these bracket shapes,
%	the other basis elements will occur in all permutations (with the caveat, for bracket shapes 
%	corresponding to	
%	$\mathcal{L}\mathcal{B}_{\ge 2}$, that certain positions must have matching elements).  
%	In bracket shapes with symmetry (such as $[ [ [ \cdot, x], \cdot], [ [ \cdot, x], \cdot] ]$), 
%	only some permutations occur.
%\end{remark}

\begin{example}\label{E:3x4y}
The vector subspace of bracket expressions 
with three $x$ and four $y$ has basis 
given by the following five elements. 
\begin{itemize}
	\item $xxxyyyy \mapsto [ [ [ [ [ [y, x], x], x], y], y], y]$
	\item $xxyxyyy \mapsto [ [ [ [y, x], y], y], [ [y, x], x] ]$
	\item $xxyyxyy \mapsto [ [ [y, x], y], [ [ [y, x], x], y] ]$ 
	\item $xxyyyxy \mapsto [ [y, x], [ [ [ [y, x], x], y], y] ]$
	\item $xyxyxyy \mapsto [ [ [ [y, x], y], [y, x] ], [y, x] ]$
\end{itemize}
\end{example}

We may use the dual graph basis and the configuration pairing 
in order to write general Lie bracket expressions 
in terms of the configuration basis by applying Theorem~\ref{T:main}. 
The following should be compared with the rewriting algorithm of \cite[Ex. 4.13]{Reut93},
which systematically applies the anti-symmetry and Jacobi identites to
different parts of bracket expression in order to rewrite as a linear combination of 
Hall basis elements.  This can take some time, since generically each application of 
the Jacobi identity adds one more bracket expression to which the algorithm must be 
applied.

\begin{example}\label{E:2x2y2z conversion1}
	We will write the bracket expression 
	$\ell = [[[x, y], [y, z]], [x,z]]$ in terms of the basis given in Example~\ref{E:2x2y2z}.  
	First, note that $\ell$ must pair to zero with
	$\mathcal{G}xxyyzz = 
	\begin{aligned}\begin{xy}
		(-6,-1)*+U{\scriptstyle x^*}="x1",
		(0,1)*+U{\scriptstyle x^*}="x3",
		(6,-1)*+U{\scriptstyle y^*}="a",  
		(12,1)*+U{\scriptstyle y^*}="b",     
		(18,-1)*+U{\scriptstyle z^*}="c",     
		(24,1)*+U{\scriptstyle z^*}="d",     
		"x3";"x1"**\dir{-}?>*\dir{>},
		"a";"x3"**\dir{-}?>*\dir{>},
		"a";"b"**\dir{-}?>*\dir{>},    
		"b";"c"**\dir{-}?>*\dir{>},    
		"c";"d"**\dir{-}?>*\dir{>},    
	\end{xy}\end{aligned}$,
	because no single cut on this graph will remove a subgraph containing
	only an $x^*$ and $z^*$ vertex.  Similar reasoning shows that $\ell$ pairs to 
	zero with all $\mathcal{G}\omega$ except for 
	$\mathcal{G}xyyzxz$, $\mathcal{G}xyzyxz$, and $\mathcal{G}xzxzyy$.
	After removing the $\linep{z^*}{x^*}$ subgraph from these, we are left with 
	$\mathcal{G}xyyz$, $\mathcal{G}xyzy$, and $\mathcal{G}xzyy$.  Of these, 
	$\mathcal{G}xzyy =
	\begin{aligned}\begin{xy}
		(0,1)*+U{\scriptstyle x^*}="x1",
		(6,-1)*+U{\scriptstyle z^*}="a",  
		(12,1)*+U{\scriptstyle y^*}="b",     
		(18,-1)*+U{\scriptstyle y^*}="c",     
		"a";"x1"**\dir{-}?>*\dir{>},
		"a";"b"**\dir{-}?>*\dir{>},    
		"b";"c"**\dir{-}?>*\dir{>},    
	\end{xy}\end{aligned}$ must pair to zero 
	with $[ [x, y], [y, z]]$ because no single cut will remove a subgraph containing only
	a $y^*$ and $z^*$ vertex.  So
	$[[[x, y], [y, z]], [x,z]] = k_1 \mathcal{L}xyyzxz + k_2 \mathcal{L}xyzyxz$.  To find $k_1$ and
	$k_2$ we compute pairings
	$k_1 = \langle \mathcal{G}xyyzxz, [ [ [x, y], [y, z]], [x, z]]\rangle = -1$ and
	$k_2 = \langle \mathcal{G}xyzyxz, [ [ [x, y], [y, z]], [x, z]]\rangle = 1$.
\end{example}

\begin{example}\label{E:2x2y2z conversion2}
	We will write the bracket expression
	$\ell = [ [ [ [ z, y], x], x], [y, z]]$ in terms of the basis given in Example~\ref{E:2x2y2z}. 
	Resoning as before, we immediatley see that $\ell$ pairs to zero with  
	$\mathcal{G}xxyyzz$, $\mathcal{G}xxzzyy$, $\mathcal{G}xyxyzz$, $\mathcal{G}xzxzyy$, and 
	$\mathcal{G}xyyxzz$.  For the remaining graphs, we compute pairings.
	\begin{itemize}
		\item $\langle \mathcal{G}xxyzyz,\, [ [ [ [ z, y], x], x], [y, z]]\rangle = -1$.
		\item $\langle \mathcal{G}xxyzzy,\, [ [ [ [ z, y], x], x], [y, z]]\rangle = 1$.
		\item $\langle \mathcal{G}xxzyyz,\, [ [ [ [ z, y], x], x], [y, z]]\rangle = 1$.
		\item $\langle \mathcal{G}xxzyzy,\, [ [ [ [ z, y], x], x], [y, z]]\rangle = -1$.
		\item $\langle \mathcal{G}xyxzyz,\, [ [ [ [ z, y], x], x], [y, z]]\rangle = 2$.
		\item $\langle \mathcal{G}xyxzzy,\, [ [ [ [ z, y], x], x], [y, z]]\rangle = -2$.
		\item $\langle \mathcal{G}xyzxzy,\, [ [ [ [ z, y], x], x], [y, z]]\rangle = 2$.
		\item $\langle \mathcal{G}xyyzxz,\, [ [ [ [ z, y], x], x], [y, z]]\rangle = 2$.
		\item $\langle \mathcal{G}xyzyxz,\, [ [ [ [ z, y], x], x], [y, z]]\rangle = -2$.
	\end{itemize}
	Pairing computations may be done either by iterating Theorem~\ref{T:bracket/cobracket},
	or by applying Definition~\ref{D:configuration pairing} directly, 
	or by some combination of the two.
	Applying Theorem~\ref{T:main} we have the following. 
	\begin{align*}
		[ [ [ [ z, y], x], x], [y, z]] = &-\mathcal{L}xxyzyz + \mathcal{L}xxyzzy +\mathcal{L}xxzyyz - \mathcal{L}xxzyzy \\
		&+2\,\mathcal{L}xyxzyz -2\,\mathcal{L}xyxzzy  +2\,\mathcal{L}xyzxzy+2\,\mathcal{L}xyyzxz -2\,\mathcal{L}xyzyxz
	\end{align*}
\end{example}

\begin{example}\label{E:3x4y conversion}
	We will write the bracket expression
	$\ell = [ [ [ [ [ [ x, y], y], x], x], y], y]$ in terms of the basis given in Example~\ref{E:3x4y}.
	Note that $\mathcal{G}xyxyxyy$ must pair to 
	zero with $\ell$ since we cannot make two consecutive cuts from the graph 
	$\begin{aligned}\begin{xy}
		(4,5)*+UR{\scriptstyle x^*}="x11",
		(7,-1)*+U{\scriptstyle y^*}="a1",  
		"a1";"x11"**\dir{-}?>*\dir{>},
		(-6,8)*+UR{\scriptstyle x^*}="x12",
		(-3,2)*+U{\scriptstyle y^*}="a2",  
		"a2";"x12"**\dir{-}?>*\dir{>},
		(16,2)*+U{\scriptstyle y^*}="b",
		(13,8)*+UR{\scriptstyle x^*}="c",
		(22,-3)*+UR{\scriptstyle y^*}="d",
		"b";"a1"**\dir{-}?>*\dir{>},
		"a1";"a2"**\dir{-}?>*\dir{>},
		"b";"c"**\dir{-}?>*\dir{>},
		"b";"d"**\dir{-}?>*\dir{>},
	\end{xy}\end{aligned}$
%		$\begin{aligned}\begin{xy}
%					(2,3)*+U{\scriptstyle x^*}="x11",
%					(7,-1)*+U{\scriptstyle y^*}="a1",  
%					"a1";"x11"**\dir{-}?>*\dir{>},
%					(1,-3)*+U{\scriptstyle x^*}="x12",
%					(6,-6)*+U{\scriptstyle y^*}="a2",  
%					"a2";"x12"**\dir{-}?>*\dir{>},
%					(19,-3)*+U{\scriptstyle y^*}="b",
%					(14,2)*+U{\scriptstyle x^*}="c",
%					"b";"a1"**\dir{-}?>*\dir{>},
%					"b";"a2"**\dir{-}?>*\dir{>},
%					"b";"c"**\dir{-}?>*\dir{>},
%					(25,-6)*+U{\scriptstyle y^*}="d",
%					"b";"d"**\dir{-}?>*\dir{>},
%				\end{xy}\end{aligned}$
	each time removing a single $y^*$ vertex.  For the remaining graphs, we compute pairings.
	\begin{itemize}
		\item $\langle \mathcal{G}xxxyyyy,\, [ [ [ [ [ [ x, y], y], x], x], y], y]\rangle = -1$.
		\item $\langle \mathcal{G}xxyxyyy,\, [ [ [ [ [ [ x, y], y], x], x], y], y]\rangle = 1$.
		\item $\langle \mathcal{G}xxyyxyy,\, [ [ [ [ [ [ x, y], y], x], x], y], y]\rangle = 2$.
		\item $\langle \mathcal{G}xxyyyxy,\, [ [ [ [ [ [ x, y], y], x], x], y], y]\rangle = 1$.
	\end{itemize}
	Thus
	$[ [ [ [ [ [ x, y], y], x], x], y], y] = 
	-\mathcal{L}xxxyyyy + \mathcal{L}xxyxyyy + 2\,\mathcal{L}xxyyxyy + \mathcal{L}xxyyyxy$.

	Note that the difficulty of these pairing calculations is more closely related to 
	the number of repetitions 
	than to the length of the bracket expression.
\end{example}

\section{The Classical Lie Coalgebra Basis}\label{S:Lyndon}

Classically, the free Lie algebra $\mathrm{L}V$ is spanned as a vector space by bracket 
expressions of
the form $[ [y_1, y_2], \dots, y_n]$.  Similarly 
the cofree conilpotent Lie coalgebra $\mathrm{E}V^*$ is spanned by ``long graphs''  --
those of the form
	$\begin{aligned}\begin{xy}                           
		(0,1)*+U{\scriptstyle y_1^*}="x3",
		(6,-1)*{\scriptstyle y_2^*}="a",  
		(12,1)*+U{\scriptstyle y_3^*}="b",     
		(18,-2)*+UR{\scriptstyle \cdots}="c", 
		(24,1)*+U{\scriptstyle y_n^*}="d",  
		"x3";"a"**\dir{-}?>*\dir{>},
		"a";"b"**\dir{-}?>*\dir{>},    
		"b";"c"**\dir{-}?>*\dir{>},   
		"c";"d"**\dir{-}?>*\dir{>}   
	\end{xy}\end{aligned}	$.
Write $y_1^*|y_2^*|\cdots|y_n^*$ for such a graph.
From \cite[Prop. 3.21]{SiWa07}, the coideal of $\mathrm{E}V^*$
generated by arrow-reversing and Arnold expressions 
is spanned by shuffles 
$\displaystyle \sum_{\sigma\in k\text{-Shuffle}} w_{\sigma(1)}|\dots|w_{\sigma(n)}$ 
where $k$-Shuffles are the shuffles of
$(1,\dots,k)$ into $(k+1,\dots,n)$.
Thus, we recover a classical representation
of the conilpotent Lie coalgebra implied by the equivalence of Harrison 
homology and commutative Andre-Quillen homology. 
The induced coalgebra structure, already noted 
in \cite{ScSt85}, is merely the anti-commutative cut coproduct.
The induced configuration pairing with Lie algebras may be 
quickly computed by recursively applying Theorem~\ref{T:bracket/cobracket}.

In this context, the configuration pairing recovers an item of classical 
interest. 
Write $p:\mathrm{L}V\to \mathrm{U}\mathrm{L}V$ for the 
standard map from $\mathrm{L}V$ to its universal enveloping algebra.
Recall that the universal enveloping algebra of a free Lie algebra is 
canonically isomorphic to the free associative algebra $\mathrm{T}V$, 
and in this case
$p$ is given by $p(v_i) = v_i$ on generators and 
$p([\ell_1,\ell_2]) = p(\ell_1)p(\ell_2) - p(\ell_2)p(\ell_1)$.

\begin{proposition}\label{P:config as coeff}
	The configuration pairing $\bigl\langle y_1^*|y_2^*|\cdots|y_n^*,\, \ell\bigr\rangle$
	is equal to the coefficient of the word $y_1y_2\cdots y_n$ in $p(\ell)$.
\end{proposition}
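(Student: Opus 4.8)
The plan is to prove this by induction on $n$, the length of the word $y_1^*|\cdots|y_n^*$ (equivalently, the number of vertices in the long graph, equivalently the number of leaves in any tree $\tau$ with $\langle y_1^*|\cdots|y_n^*,\tau\rangle \neq 0$). The base case $n=1$ is immediate: $y_1^*|{}$ is the single-vertex graph $v_i^*$, the only trees it pairs nontrivially with are single leaves $v_i$, the pairing is $1$, and $p(v_i) = v_i$ has coefficient $1$ on the word $v_i$. For the inductive step I would use bilinearity to reduce to the case where $\ell$ is a bracket monomial, and then write $\ell = [\tau_1,\tau_2]$ (every nontrivial Lie monomial of length $\geq 2$ is such a bracket).

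The key computation is to expand both sides. On the coalgebra side, Theorem~\ref{T:bracket/cobracket} gives
\[
\bigl\langle y_1^*|\cdots|y_n^*,\, [\tau_1,\tau_2]\bigr\rangle = \sum_i \bigl\langle \alpha_i,\tau_1\bigr\rangle\,\bigl\langle\beta_i,\tau_2\bigr\rangle,
\]
where $]y_1^*|\cdots|y_n^*[\ = \sum_i \alpha_i\otimes\beta_i$. Since the cobracket on a long graph is just the (anti-commutative) cut coproduct, each cut of the long graph $y_1^*|\cdots|y_n^*$ at an edge splits it into two shorter long graphs: cutting the $k$-th edge yields $\pm\bigl(y_1^*|\cdots|y_k^*\bigr)\otimes\bigl(y_{k+1}^*|\cdots|y_n^*\bigr)$, where the sign records which side the arrow points to. The crucial point I must pin down is that, in the graph model $\mathcal{G}$, the long graph $y_1^*|\cdots|y_n^*$ corresponds under the isomorphism with $\mathrm{E}V^*$ to an element whose cobracket matches exactly the signed cut coproduct $\sum_k \bigl(y_1^*|\cdots|y_k^*\bigr)\otimes\bigl(y_{k+1}^*|\cdots|y_n^*\bigr) - \bigl(y_{k+1}^*|\cdots|y_n^*\bigr)\otimes\bigl(y_1^*|\cdots|y_k^*\bigr)$; this is essentially the content of the discussion preceding the proposition (citing \cite{SiWa07, ScSt85}), so I can invoke it. On the algebra side, $p([\tau_1,\tau_2]) = p(\tau_1)p(\tau_2) - p(\tau_2)p(\tau_1)$, and the coefficient of the word $w = y_1y_2\cdots y_n$ in a product $p(\tau_1)p(\tau_2)$ is obtained by summing, over all ways to split $w$ as a concatenation $w = w'w''$ with $w'$ of each possible length $k$ (here $0\leq k\leq n$, but the degree of $\tau_1$ forces a unique $k = \deg\tau_1$), the product of the coefficient of $w'$ in $p(\tau_1)$ and of $w''$ in $p(\tau_2)$.

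So both sides reduce to a sum over splittings $w = (y_1\cdots y_k)(y_{k+1}\cdots y_n)$ of a product of two shorter quantities, and by the induction hypothesis applied to $\tau_1$ and $\tau_2$ these shorter quantities literally agree term by term: $\langle y_1^*|\cdots|y_k^*, \tau_1\rangle$ equals the coefficient of $y_1\cdots y_k$ in $p(\tau_1)$, and similarly for $\tau_2$. The only thing left to check is that the signs line up — that the minus sign in $p(\tau_1)p(\tau_2) - p(\tau_2)p(\tau_1)$ corresponds correctly to the two terms $\alpha_i\otimes\beta_i$ and $-\beta_i\otimes\alpha_i$ coming from each cut of the long graph, accounting also for the intrinsic sign of each cut (arrow direction). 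Concretely: a cut at the $k$-th edge with the arrow pointing rightward contributes $+\langle y_1^*|\cdots|y_k^*,\tau_1\rangle\langle y_{k+1}^*|\cdots|y_n^*,\tau_2\rangle$ to the bracket pairing and the corresponding concatenation contributes $+(\text{coeff of }y_1\cdots y_k\text{ in }p(\tau_1))(\text{coeff of }y_{k+1}\cdots y_n\text{ in }p(\tau_2))$ to the coefficient in $p(\tau_1)p(\tau_2)$; the swapped term $-\beta_i\otimes\alpha_i$ matches the $-p(\tau_2)p(\tau_1)$ piece.

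The main obstacle I anticipate is precisely this sign bookkeeping: making sure that the sign convention built into the cut coproduct $]\gamma[\ = \sum_e \gamma_1^{\hat e}\otimes\gamma_2^{\hat e} - \gamma_2^{\hat e}\otimes\gamma_1^{\hat e}$ (where $\gamma_2^{\hat e}$ is the subgraph the removed edge \emph{points to}) is compatible with reading the long graph left-to-right as the word $y_1^*|\cdots|y_n^*$, and that after composing with the pairings the antisymmetrization $\alpha\otimes\beta - \beta\otimes\alpha$ reproduces the commutator $p(\tau_1)p(\tau_2)-p(\tau_2)p(\tau_1)$ rather than its negative. I would handle this by fixing, once and for all, the orientation convention on the long graph $y_1^*|\cdots|y_n^*$ — say, all arrows pointing from $y_j^*$ toward $y_{j+1}^*$, or whatever convention $\mathcal{G}$ actually produces on long subwords — and then checking a single small case ($n=2$, $\ell=[v_i,v_j]$) to calibrate; the general sign then follows formally. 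A secondary, minor point is to confirm that no other cuts contribute: cuts of the long graph only ever produce long graphs again, so there is no issue of the cobracket landing outside the span of long graphs, and degree considerations force each cut to pair nontrivially with $[\tau_1,\tau_2]$ only at the one location $k = \deg\tau_1$, matching the unique concatenation splitting on the associative side.
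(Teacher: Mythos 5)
Your proposal is correct and is exactly the paper's argument: the paper's proof is a one-line appeal to Theorem~\ref{T:bracket/cobracket}, the commutator formula for $p$, and the pairing $\langle v_i^*, v_j\rangle=\delta(i,j)$ on generators, which is precisely the induction you spell out (with the cut coproduct of the long graph matching concatenation splittings of the word). Your sign calibration also checks out against the paper's convention that all arrows in $\mathcal{G}$-long graphs point rightward, so the $+\alpha\otimes\beta$ term of the cobracket matches $+p(\tau_1)p(\tau_2)$.
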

\begin{proof}
	This follows formally from Theorem~\ref{T:bracket/cobracket}, the structure of $p$
	noted above,
	and the 
	fact that on generators
	$\langle v_i^*, v_j \rangle = \delta(i,j)$.
\end{proof}

Write $(-)^*$ for the map $\mathcal{A}\to \mathrm{E}V^*$ which reads
a word as a bar word, $(y_1y_2\dots y_n)^* = y_1^*|y_2^*|\cdots|y_n^*$.
Classically, Lyndon words are a multiplicative basis for the 
algebra of all words with shuffle product \cite{Radf79}.  Combined with 
\cite[Prop. 3.21]{SiWa07} this implies 
$\mathcal{B}^*$ is a vector space basis for $\mathrm{E}V^*$.
%In fact, $\mathcal{B}^*$ is the only known such basis of words, 
%which may account for its central role in the theory of free Lie algebras.
%A few short calculations with the configuration pairing show that there are 
%in general no dual bases consisting of 
%monomials in $\mathrm{L}V$ and dual monomials of bar expressions.

\begin{example}\label{E:3x4y bars1}
	We can use the configuration pairing with $\mathcal{B}^*$ 
	to recover the result of Example~\ref{E:3x4y conversion}.  
	In Figure~1 we give the portion of the pairing matrix of  
	$\mathcal{B}^*$ and $\mathcal{L}\mathcal{B}$ 
	for the basis recorded in Example~\ref{E:3x4y} as well as with the
	Lie bracket expression from Example~\ref{E:3x4y conversion}.
	To conserve space, we write $\mathcal{L}\omega_1,\dots,\mathcal{L}\omega_7$ 
	for the (lexicographically ordered) basis of 
	Example~\ref{E:3x4y} and $\ell = [ [ [ [ [ [x, y], y], x], x], y], y]$.
	For visual clarity we leave blank positions where the configuration
	pairing is zero.

	\begin{figure}[h]\label{F:3x4y bars1}
	\begin{tabular}{c || c c c c c | c}
		& $\mathcal{L}\omega_1$
			& $\mathcal{L}\omega_2$
			& $\mathcal{L}\omega_3$
			& $\mathcal{L}\omega_4$
			& $\mathcal{L}\omega_5$ 
			& $\ell$ \\
			\hline \hline
		$\omega_1^*$ & -1 &    &    &    &    &  1 \\
		$\omega_2^*$ &  3 &  1 &    &    &    & -2 \\
		$\omega_3^*$ &    & -3 &  1 &    &    & -1 \\
		$\omega_4^*$ &    &  3 & -2 &  1 &    &    \\
		$\omega_5^*$ &    &  6 & -2 &  2 & -1 &  4 \\
	\end{tabular}

	\vskip 5pt
	$$\ell = -\mathcal{L}\omega_1 + \mathcal{L}\omega_2 + 2\,\mathcal{L}\omega_3 + \mathcal{L}\omega_4$$
	\caption{Pairing with configuration basis from Example~\ref{E:3x4y}. }
	\end{figure}

\end{example}

\begin{example}\label{E:2x2y2z bars}
	In the same way, we recover the result of 
	Examples~\ref{E:2x2y2z conversion1} and \ref{E:2x2y2z conversion2}.
	In Figure~2, we give the portion of the pairing matrix of  
	$\mathcal{B}^*$ and $\mathcal{L}\mathcal{B}$ for the 
	(lexicographically ordered) basis recorded in Example~\ref{E:2x2y2z}.
	It is an exercise for the reader to finish this
	example by computing pairings with the brackets of Examples~\ref{E:2x2y2z conversion1}
	and \ref{E:2x2y2z conversion2}.

	\begin{figure}[h]\label{F:2x2y2z bars}
	\begin{tabular}{c || c c c c c c c c c c c c c c }
		& $\mathcal{L}\omega_1$
			& $\mathcal{L}\omega_2$
			& $\mathcal{L}\omega_3$
			& $\mathcal{L}\omega_4$
			& $\mathcal{L}\omega_5$
			& $\mathcal{L}\omega_6$
			& $\mathcal{L}\omega_7$
			& $\mathcal{L}\omega_8$
			& $\mathcal{L}\omega_9$
			& $\mathcal{L}\omega_{10}$ 
			& $\mathcal{L}\omega_{11}$ 
			& $\mathcal{L}\omega_{12}$ 
			& $\mathcal{L}\omega_{13}$ 
			& $\mathcal{L}\omega_{14}$ 
			 \\
			\hline \hline
		$\omega_1^*$ &
	1& & & & & & & & & & & & & 	 \\
		$\omega_2^*$ &
	 &1& & & & & & & & & & & & 	 \\
		$\omega_3^*$ &
	 & &1& & & & & & & & & & & 	 \\
		$\omega_4^*$ &
	 & & &1& & & & & & & & & & 	 \\
		$\omega_5^*$ &
	 & & & &1& & & & & & & & & 	 \\
		$\omega_6^*$ &
	 & & & & &1& & & & & & & & 	 \\
		$\omega_7^*$ &
	-2& & & & & &-1& & & & & & & 	 \\
		$\omega_8^*$ &
	 &-2& & & & & &-1& & & & & & 	 \\
		$\omega_9^*$ &
	 & &-2& & & & & &-1& & & & & 	 \\
		$\omega_{10}^*$ &
	 & & & & & &1&1&1&-1& & & & 	 \\
		$\omega_{11}^*$ &
	 & & & & & & &-1&-2&2&-1& & & 	 \\
		$\omega_{12}^*$ &
	 & & & & & & & 1& 2& & &-1& & 	 \\
		$\omega_{13}^*$ &
	 & & & & & &-2&-1& & & & 1&-1& 	 \\
		$\omega_{14}^*$ &
	 & & & & &-2& & & & & & & &-1	 \\
	\end{tabular}
	\caption{Pairing with configuration basis from Example~\ref{E:2x2y2z}.} 
\end{figure}
%DATA:
% Configuration basis (ordered):
%  [[[[[2,1],1],2],3],3] [[[[[2,1],1],3],2],3] [[[[[2,1],1],3],3],2] [[[[[3,1],1],2],2],3] [[[[[3,1],1],2],3],2]
%  [[[[[3,1],1],3],2],2] [[[[2,1],3],3],[2,1]] [[[[3,1],2],3],[2,1]] [[[[3,1],3],2],[2,1]] [[[3,1],3],[[2,1],2]] 
%  [[3,1],[[[2,1],2],3]] [[[3,1],2],[[2,1],3]] [[3,1],[[[2,1],3],2]] [[[[3,1],2],2],[3,1]]
% Lyndon basis (ordered):
%  112233 112323 112332 113223 113232 113322 121233 121323 121332 122133 122313 123132 123213 131322
% Pairing matrix:
%	 1	 0	 0	0	 0	 0	 0	 0	 0	 0	 0	 0	 0	0	
%	 0	 1	 0	0	 0	 0	 0	 0	 0	 0	 0	 0	 0	0	
%	 0	 0	 1	0	 0	 0	 0	 0	 0	 0	 0	 0	 0	0	
%	 0	 0	 0	1	 0	 0	 0	 0	 0	 0	 0	 0	 0	0	
%	 0	 0	 0	0	 1	 0	 0	 0	 0	 0	 0	 0	 0	0	
%	 0	 0	 0	0	 0	 1	 0	 0	 0	 0	 0	 0	 0	0	
%	-2	 0	 0	0	 0	 0	-1	 0	 0	 0	 0	 0	 0	0	
%	 0	-2	 0	0	 0	 0	 0	-1	 0	 0	 0	 0	 0	0	
%	 0	 0	-2	0	 0	 0	 0	 0	-1	 0	 0	 0	 0	0	
%	 0	 0	 0	0	 0	 0	 1	 1	 1	-1	 0	 0	 0	0	
%	 0	 0	 0	0	 0	 0	 0	-1	-2	 2	-1	 0	 0	0	
%	 0	 0	 0	0	 0	 0	 0	 1	 2	 0	 0	-1	 0	0	
%	 0	 0	 0	0	 0	 0	-2	-1	 0	 0	 0	 1	-1	0	
%	 0	 0	 0	0	 0	-2	 0	 0	 0	 0	 0	 0	 0	-1

\end{example}

The shape of the pairing matrix in the previous examples is no coincidence.

\begin{theorem}\label{P:lower triangular}
	If $\omega < \upsilon \in \mathcal{B}$ then 
	$\langle\omega^*,\,\mathcal{L}\upsilon\rangle =0$.
	Furthermore $\langle\omega^*,\,\mathcal{L}\omega\rangle = \pm 1$.
\end{theorem}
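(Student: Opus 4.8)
The plan is to reinterpret the whole statement through Proposition~\ref{P:config as coeff}. Writing $\omega = y_1\cdots y_n$, the pairing $\langle\omega^*,\,\mathcal{L}\upsilon\rangle = \langle y_1^*|\cdots|y_n^*,\,\mathcal{L}\upsilon\rangle$ is exactly the coefficient of the word $\omega$ in the associative expansion $p(\mathcal{L}\upsilon) \in \mathrm{T}V$. So the theorem becomes a purely associative statement: the coefficient of a Lyndon word $\omega < \upsilon$ in $p(\mathcal{L}\upsilon)$ vanishes, and that of $\upsilon$ is $\pm 1$. I would in fact prove the stronger claim that $\upsilon$ is the lexicographically least word occurring in $p(\mathcal{L}\upsilon)$, with coefficient $\pm1$, every other word occurring being lex-larger and of the same multidegree as $\upsilon$ (which is forced since $\mathcal{L}\upsilon$ is a bracketing of the letters of $\upsilon$). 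Since a Lyndon word of different multidegree pairs to zero trivially, and a Lyndon word $\omega < \upsilon$ of the same multidegree then lies strictly below every word in the support of $p(\mathcal{L}\upsilon)$, this strengthened claim implies the theorem immediately.

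I would prove the strengthened claim by induction on the level $m$ with $\upsilon \in \mathcal{B}_m$, the case $m=0$ being trivial. For the inductive step, write $\upsilon = \omega^k\psi_1\cdots\psi_\ell$ in its unique decomposition, so $\omega,\psi_i \in \mathcal{B}_{m-1}$ and $\psi_i > \omega$ by Lemma~\ref{L:order of B}. As $p$ is a map of Lie algebras, $p(\mathcal{L}\upsilon)$ is the left-normed commutator bracket in $\mathrm{T}V$ of $p(\mathcal{L}\psi_1)$, then $k$ copies of $p(\mathcal{L}\omega)$, then $p(\mathcal{L}\psi_2),\dots,p(\mathcal{L}\psi_\ell)$; by the inductive hypothesis each of these factors has the shape ``(lex-least word) $\pm$ (larger words of the same multidegree)'' with lex-least words $\psi_1,\omega,\psi_2,\dots,\psi_\ell$. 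The computational engine is then the elementary fact that if $B$ and $A$ are such polynomials with lex-least words $b_0,a_0$ of fixed multidegrees, then $BA$ has lex-least word $b_0a_0$ and $AB$ has lex-least word $a_0b_0$, each occurring once with coefficient $\pm1$, so $[B,A]=BA-AB$ has lex-least word $\min(b_0a_0,a_0b_0)$, coefficient $\pm1$, and the same ``least $\pm$ larger'' shape. Iterating up the left-normed bracket, and using the classical fact that for Lyndon words $u<v$ the concatenation $uv$ is Lyndon with $uv<vu$, each commutator step appends exactly one more ``letter'' of $\upsilon$ to the least word, provided at every stage the already-assembled prefix is lex-less than the letter being appended.

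That proviso is the one point needing genuine input about the structure of $\mathcal{B}_m$: when appending $\psi_j$ ($j\geq 2$) I need $\omega^k\psi_1\cdots\psi_{j-1} < \psi_j$, while the stages appending copies of $\omega$ only need $\omega<\psi_1$, which is given. But $\omega^k\psi_1\cdots\psi_{j-1}$, read as a word over the $\mathcal{A}_{m-1}$-alphabet of compatible words, begins with the single letter $\omega$, and $\psi_j$ is a single such letter with $\omega<\psi_j$; hence $\omega^k\psi_1\cdots\psi_{j-1}<\psi_j$ already in the $\mathcal{A}_{m-1}$-level order, which agrees with lexicographic order on $\mathcal{A}$ by Lemma~\ref{L:order of B}. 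The same inequalities also keep every assembled prefix $\omega^k\psi_1\cdots\psi_{j-1}$ Lyndon, so the Lyndon concatenation fact genuinely applies at each stage; the final assembled word is $\omega^k\psi_1\cdots\psi_\ell=\upsilon$, with coefficient $\pm1$ read off from the signs in the $BA$-versus-$AB$ choices — the theorem claims nothing finer about the sign, and nothing finer is true. The proviso itself being short, the part of the argument most prone to error is the bookkeeping: verifying that each commutator step shifts the lex-least word exactly as predicted and preserves the ``least $\pm$ larger, same multidegree'' shape; the Lyndon-word inputs are standard.
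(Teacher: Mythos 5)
Your proposal is correct in outline but takes a genuinely different route from the paper. The paper works entirely on the coalgebra side: it rewrites the bar word $\omega^*$ in terms of the graph basis $\mathcal{G}\mathcal{B}$ by repeated applications of the Arnold identity (via Proposition~\ref{L:convert simple words} and Corollary~\ref{C:convert simple words}), checks that only graphs $\mathcal{G}\upsilon_i$ with $\upsilon_i\le\omega$ ever appear, and then invokes the duality of Theorem~\ref{T:main}. You instead work on the associative side: via Proposition~\ref{P:config as coeff} the theorem becomes the statement that $\upsilon$ is the lexicographically least word of $p(\mathcal{L}\upsilon)$, with coefficient $\pm1$ --- the exact analogue for the configuration bracketing of Reutenauer's triangularity theorem for $[\mathcal{B}]$ --- proved by induction up the left-normed structure of $\mathcal{L}\upsilon$ using standard Lyndon-word facts. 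Your route avoids graph coalgebras entirely and, combined with the Witt count, would independently reprove that $\mathcal{L}\mathcal{B}$ is a basis; this is precisely the ``entirely in the realm of associative algebras'' argument that the remark following the paper's proof describes as appearing difficult, so it is worth writing out in full. What the paper's route buys instead is the explicit rewriting $\omega^*=\sum_i c_i\,\mathcal{G}\upsilon_i$, which feeds the pairing-matrix program of Section~\ref{S:future}.

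The one step whose justification is too quick is the proviso $\omega^k\psi_1\cdots\psi_{j-1}<\psi_j$. Comparing a multi-letter string against a single letter in the abstract $\mathcal{A}_{m-1}$-alphabet and asserting agreement with lexicographic order on $\mathcal{A}$ is more than Lemma~\ref{L:order of B} literally provides: when $\omega$ is a proper prefix of $\psi_j$ as a word of $\mathcal{A}$ (e.g.\ $\omega=12$, $\psi_j=122$, or $\omega=(12)(13)$, $\psi_j=(12)(13)(122)$), the comparison is only decided after the common prefix, and one must check that the continuation of $\omega^k\psi_1\cdots$ wins there; this requires a level-by-level argument, not just the single-letter comparison. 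A clean repair: $P=\omega^k\psi_1\cdots\psi_{j-1}$ and $P\psi_j=\omega^k\psi_1\cdots\psi_j$ both lie in $\mathcal{B}_m$ by Lemma~\ref{L:order of B} (the truncations satisfy the same hypotheses), hence both are Lyndon; and a concatenation $uv$ of nonempty words is Lyndon only if $u<v$ (since a Lyndon word is smaller than its proper suffix $v$, while $u\ge v$ forces $uv\ge v$). This gives $P<\psi_j$ directly. With that patch --- together with the observation you already make that distinct Lyndon factors never satisfy $uv=vu$, so the minimal word cannot cancel in $BA-AB$ --- the induction closes and the argument is sound.
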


In our proof, we make use of the following computational proposition and
its corollary.  (To conserve space
below, we neglect marking $^*$ on graph vertex labels.)

\begin{proposition}\label{L:convert simple words}
	Modulo the Arnold and arrow-reversing relations, the following local identity holds.
	$$\displaystyle
	\begin{xy}
		(0,3)*{\scriptstyle y_1}="x1",
		(0,1)*{\scriptstyle \vdots},
		(0,-3)*{\scriptstyle y_k}="x3",
		(9,0)*+UR{\scriptstyle z}="y",
  (4,-5),{\ar@{. }@(l,l)(3,6)},
  (4,6),{\ar@{. }@(r,r)(3,-5)},
  ?!{"y";"y"+/va(0)/}="y1",
  ?!{"y";"y"+/va(60)/}="y2",
  ?!{"y";"y"+/va(30)/}="y3",
		"y";"x1" **\dir{-}?>*\dir{>},
		"y";(1,0) **\dir{-}?>*\dir{>},
		"y";"x3" **\dir{-}?>*\dir{>},
		"y";"y1" **\dir{-},
		"y";"y2" **\dir{-},
		"y";"y3" **\dir{-},
	\end{xy} \ = \ (-1)^k\,\sum_{\sigma\in \Sigma_k}
	\begin{xy}
		(-2,-2)*{\scriptstyle y_{\sigma(1)}}="x3",
		(2,2)*{\scriptstyle \cdots}="x2",
		(6,-2)*{\scriptstyle y_{\sigma(k)}}="x1",
		(10,2)*+UR{\scriptstyle z}="y",
  (4,-5),{\ar@{. }@(l,l)(3,6)},
  (4,6),{\ar@{. }@(r,r)(3,-5)},
  ?!{"y";"y"+/va(0)/}="y1",
  ?!{"y";"y"+/va(60)/}="y2",
  ?!{"y";"y"+/va(30)/}="y3",
		"x1";"y" **\dir{-}?>*\dir{>},
		"x2";"x1" **\dir{-}?>*\dir{>},
		"x3";"x2" **\dir{-}?>*\dir{>},
		"y";"y1" **\dir{-},
		"y";"y2" **\dir{-},
		"y";"y3" **\dir{-},
	\end{xy}	$$
\end{proposition}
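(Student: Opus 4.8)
The plan is to prove this by induction on $k$, the number of vertices $y_1,\dots,y_k$ hanging as in-edges off the pivot vertex $z$. The base case $k=1$ is exactly the arrow-reversing relation: the edge $\linep{z}{y_1}$ becomes $-\linep{y_1}{z}$, which matches the right-hand side since $(-1)^1 = -1$ and $\Sigma_1$ is trivial. For the inductive step, I would peel off a single vertex, say $y_k$, and apply the Arnold relation to the three vertices $y_k$, $z$, and one of the remaining $y_i$'s (or rather to the edge configuration near $z$). The key mechanism is that the Arnold identity, applied repeatedly, lets one "rotate" an edge pointing from $z$ into a vertex so that instead it points out of that vertex into $z$, at the cost of a sign and a sum over where the vertex gets reinserted into the chain.

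Concretely, the main step is a sublemma: modulo Arnold and arrow-reversing, attaching one more source vertex $w$ to a pivot $z$ that already carries a directed chain $x_1 \leftarrow x_2 \leftarrow \cdots \leftarrow x_j \leftarrow z$ (chain feeding into $z$) equals $-\sum_{i=0}^{j}$ (the graph where $w$ is spliced into position $i$ of the chain, i.e. $x_1 \leftarrow \cdots \leftarrow x_i \leftarrow w \leftarrow x_{i+1} \leftarrow \cdots \leftarrow z$). Granting this sublemma, the proposition follows: starting from all $k$ vertices hanging off $z$, repeatedly apply the sublemma to convert each hanging vertex in turn into a chain feeding $z$; the product of signs is $(-1)^k$, and the nested sums over insertion positions assemble exactly into the sum over all permutations $\sigma \in \Sigma_k$ of the chain $y_{\sigma(1)} \leftarrow \cdots \leftarrow y_{\sigma(k)} \leftarrow z$. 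Each term retains the fixed outside structure on $z$ (the extra half-edges drawn at $z$) untouched, since Arnold and arrow-reversing are local.

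The sublemma itself is proved by an inner induction on $j$, using one application of Arnold to the triple $\{w, z, x_j\}$ (where $x_j$ is the vertex adjacent to $z$ in the chain) together with arrow-reversing to normalize edge directions, which trades the configuration "$w$ and the chain both attached at $z$" for "$w$ attached at $x_j$" plus "$w$ inserted between $x_j$ and $z$," and then recurses on the shorter chain for the first term. I expect the bookkeeping of signs and edge orientations in this inner induction to be the main obstacle — one must be careful that the Arnold relation as stated (with its particular cyclic arrangement of the three oriented edges) is applied with the correct orientation of $w$, $z$, $x_j$, and that each rewrite genuinely leaves the ambient graph fixed. Once the orientations are pinned down, the combinatorics of merging the iterated insertion-position sums into $\Sigma_k$ is a standard "insertion model for permutations" argument and needs no real computation.
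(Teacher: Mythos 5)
Your proof is viable, but it is a genuinely different argument from the one in the paper. The paper does not manipulate the relations at all: it introduces a formal alphabet in which $z$ is minimal and all labels are distinct, observes that the star $G$ pairs to $1$ with $\mathcal{L}(zy_{\sigma(1)}\cdots y_{\sigma(k)})$ for every $\sigma$ and to $0$ with $\mathcal{L}\omega$ for every other $\omega\in\mathcal{B}$, and then invokes the dual-basis/perfect-pairing machinery (Theorem~\ref{T:main}) to conclude that $G$ must equal the stated sum of chains in $\mathrm{E}V^*$. Your route instead derives the identity syntactically from the Arnold and arrow-reversing relations, via an insertion sublemma and a double induction. I checked that your sublemma does close up: applying Arnold to the triple $(w, x_j, z)$ where $x_j\to z$ is the last edge of the chain, one term is the chain with $w$ inserted just before $z$, and the other term, after one arrow reversal, is exactly the sublemma's configuration with $x_j$ playing the role of the pivot and the edge $x_j\to z$ absorbed into the fixed ambient structure --- so the inner induction on $j$ goes through, and the outer iteration assembles the nested insertion sums into $\Sigma_k$ by the standard insertion model. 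Your stated sign ($-\sum$ in the sublemma, with the $(-1)^k$ accumulating one factor per insertion) is internally consistent with not pre-reversing the arrows, though your arrow notation for ``chain feeding into $z$'' is garbled and would need to be pinned down; if you first reverse all $k$ edges of the star (collecting $(-1)^k$ at once), the remaining insertion identity is sign-free, which is the cleaner bookkeeping. As for what each approach buys: the paper's proof is two lines but leans on perfectness of the configuration pairing (hence, as stated, on the characteristic-zero hypothesis of Theorem~\ref{T:perfect pair}) and on the reduction to distinct labels by multilinearity; your proof is longer and orientation-fussy but is a pure consequence of the defining relations, needs no genericity of labels, and is valid over any coefficient ring --- which is actually the more robust statement given the paper's own remark about removing the characteristic-zero assumption.
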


\begin{corollary}\label{C:convert simple words}
	Modulo the Arnold and arrow-reversing relations, the following local identity holds.
	$$\displaystyle
	{\scriptstyle k}\biggl\{
	\hspace{-25pt}
	\begin{xy}
		(0,3)*{\scriptstyle x}="x1",
		(0,1)*{\scriptstyle \vdots},
		(0,-3)*{\scriptstyle x}="x3",
		(9,0)*+UR{\scriptstyle y}="y",
  (4,-5),{\ar@{. }@(l,l)(3,6)},
  (4,6),{\ar@{. }@(r,r)(3,-5)},
  ?!{"y";"y"+/va(0)/}="y1",
  ?!{"y";"y"+/va(60)/}="y2",
  ?!{"y";"y"+/va(30)/}="y3",
		"y";"x1" **\dir{-}?>*\dir{>},
		"y";(1,0) **\dir{-}?>*\dir{>},
		"y";"x3" **\dir{-}?>*\dir{>},
		"y";"y1" **\dir{-},
		"y";"y2" **\dir{-},
		"y";"y3" **\dir{-},
	\end{xy} \ = \ (-1)^k\,k!
	\begin{xy}
		(-3,-2)*{\scriptstyle x}="x3",
		(1,2)*{\scriptstyle \cdots}="x2",
		(5,-2)*{\scriptstyle x}="x1",
		(9,2)*+UR{\scriptstyle y}="y",
  (4,-5),{\ar@{. }@(l,l)(3,6)},
  (4,6),{\ar@{. }@(r,r)(3,-5)},
  ?!{"y";"y"+/va(0)/}="y1",
  ?!{"y";"y"+/va(60)/}="y2",
  ?!{"y";"y"+/va(30)/}="y3",
		"x1";"y" **\dir{-}?>*\dir{>},
		"x2";"x1" **\dir{-}?>*\dir{>},
		"x3";"x2" **\dir{-}?>*\dir{>},
		"y";"y1" **\dir{-},
		"y";"y2" **\dir{-},
		"y";"y3" **\dir{-},
	\end{xy}.	$$
\end{corollary}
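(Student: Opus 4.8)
The plan is to prove Proposition~\ref{L:convert simple words} by induction on $k$, using nothing but repeated local applications of the arrow-reversing and Arnold relations; Corollary~\ref{C:convert simple words} will then be the special case $y_1=\dots=y_k=x$, in which all $k!$ summands on the right coincide and we collect the factor $k!$. Throughout, ``local'' means the rest of the ambient graph is held fixed, which is exactly the regime in which the two defining relations of $\mathrm{E}V^*$ are stated.

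First I would isolate the base move, the \emph{in-cherry resolution}: if a vertex $z$ has exactly two incoming edges $u\to z$ and $w\to z$, with $u$ and $w$ not joined to each other in the rest of the graph, then modulo the relations the graph equals the sum of the two directed paths $u\to w\to z$ and $w\to u\to z$, everything else unchanged. This is a three-step computation: arrow-reverse $w\to z$ to obtain $-(u\to z\to w)$; apply the Arnold relation to the triple $\{u,z,w\}$, whose three terms are the rotations $u\to z\to w$, $z\to w\to u$, $w\to u\to z$, to rewrite $-(u\to z\to w)$ as $(z\to w\to u)+(w\to u\to z)$; and finally arrow-reverse both edges of $z\to w\to u$, turning it into $u\to w\to z$ with the two sign changes cancelling.

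Next I would prove an \emph{insertion identity}: if $a_1\to a_2\to\dots\to a_m\to\star$ is a directed path ending at a vertex $\star$ and $b$ is a vertex whose only edge is $b\to\star$, then modulo the relations this graph equals $\sum_{j=0}^{m}$ of the same picture with $b$ spliced into the path immediately after $a_j$, where $j=0$ means $b$ is placed at the head. This goes by induction on $m$: for $m=1$ it is exactly the in-cherry resolution at $\star$; for $m>1$, apply the in-cherry resolution at $\star$ using its two incoming edges $a_m\to\star$ and $b\to\star$, which yields the single path $a_1\to\dots\to a_m\to b\to\star$ (the slot $j=m$) together with the picture $a_1\to\dots\to a_m\to\star$ carrying an extra edge $b\to a_m$; the latter is the insertion-identity configuration of level $m-1$ with terminal vertex $a_m$ (and $a_m\to\star$ absorbed into the ambient graph), so the induction hypothesis produces the slots $j=0,\dots,m-1$. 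One checks along the way that at each stage the vertex $b$ still has a single edge, so the two vertices feeding the current terminal vertex are always unjoined and the Arnold relation applies locally.

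Finally, for the Proposition itself: when $k=1$ the statement is a single arrow-reversal of $z\to y_1$. For the inductive step, arrow-reverse the edge $z\to y_k$ (sign $-1$); the vertex $z$ now receives an incoming edge $y_k\to z$ while still pointing out to $y_1,\dots,y_{k-1}$. Apply the induction hypothesis to this sub-corolla, treating $y_k\to z$ as part of the fixed ambient graph, to get $(-1)^k\sum_{\tau\in\Sigma_{k-1}}Q_\tau$, where $Q_\tau$ is the path $y_{\tau(1)}\to\dots\to y_{\tau(k-1)}\to z$ together with the extra edge $y_k\to z$. Apply the insertion identity to each $Q_\tau$ (with terminal vertex $z$ and extra vertex $y_k$) to expand it as a sum over the $k$ slots where $y_k$ can be inserted, and note that the pair (relative order $\tau$ of $y_1,\dots,y_{k-1}$, slot of $y_k$) is exactly a repackaging of a single $\sigma\in\Sigma_k$; this reassembles $(-1)^k\sum_{\sigma\in\Sigma_k}$ of the right-hand-side paths. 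The only real obstacle is bookkeeping: keeping the signs straight through the arrow-reversals inside the in-cherry resolution, and verifying at every stage that the Arnold relation is being applied to a triple with the required ``missing edge'' so that it is genuinely local. (One could instead try to pair both sides against all trees and invoke the perfect pairing of Theorem~\ref{T:perfect pair}, but handling the fixed ambient subgraph there is more awkward than the direct rewriting above.)
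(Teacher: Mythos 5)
Your argument is correct, and it reaches Corollary~\ref{C:convert simple words} by a genuinely different route than the paper. The paper deduces the corollary from Proposition~\ref{L:convert simple words} exactly as you do (specialize $y_1=\dots=y_k=x$ and collect the $k!$ identical summands), but it proves the proposition by duality: it computes the configuration pairing of the corolla $G$ against the basis $\mathcal{L}\mathcal{B}$ (getting $1$ on $\mathcal{L}zy_{\sigma(1)}\cdots y_{\sigma(k)}$ and $0$ elsewhere) and invokes the dual-basis statement of Theorem~\ref{T:main} to identify $G$ with the sum of paths in $\mathrm{E}V^*$. You instead give a direct rewriting proof inside $\mathrm{Gr}(V^*)$: your in-cherry resolution (one arrow-reversal, one Arnold, two arrow-reversals, with the signs cancelling as you say), the insertion identity by induction on the path length, and the outer induction on $k$ with the bijection $\Sigma_{k-1}\times\{0,\dots,k-1\}\cong\Sigma_k$ all check out, including the sign $(-1)^k$. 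What your approach buys: it is self-contained (no appeal to Theorem~\ref{T:main} or Theorem~\ref{T:perfect pair}, hence no circularity worries and no characteristic-zero or field hypotheses), and it makes the \emph{local} nature of the identity manifest --- the ambient edges at $z$ genuinely ride along through every relation application, whereas the paper's pairing argument silently reduces the local statement to the standalone graph. What the paper's approach buys is brevity, and it illustrates the paper's main theme of computing in $\mathrm{E}V^*$ via the pairing. One small remark: the hypothesis in your in-cherry lemma that $u$ and $w$ be unjoined is not actually needed (the Arnold relation fixes everything outside the two indicated edges), though it is harmless since all graphs arising here are trees.
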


\begin{proof}[Proof of Theorem~\ref{P:lower triangular}]
	Applying Theorem~\ref{T:main}, the first statement is equivalent to 
	$\omega^* = \sum_i c_i\, \mathcal{G}\upsilon_i$ where
	$\upsilon_i \le \omega$.  We apply the Arnold identity repeatedly, 
	neglecting arrows for simplicity.

	Repeated applications of the Arnold identity 
	and Corollary~\ref{C:convert simple words} convert 
	$\begin{aligned}\begin{xy}    
		(-2,2)*+UR{\scriptstyle \cdots}="x3",
		(6,-2)*+U{\scriptstyle y_{i,\ell_i}}="a",  
		(13,1)*+U{\scriptstyle x}="b",     
		(19,-2)*+UR{\scriptstyle \cdots}="c", 
		(25,1)*+U{\scriptstyle x}="d",  
		(32,-2)*+U{\scriptstyle y_{i+1,1}}="e",
		(40,2)*+UR{\scriptstyle \cdots}="f",
		"x3";"a"**\dir{-},
		"a";"b"**\dir{-},    
		"b";"c"**\dir{-},   
		"c";"d"**\dir{-},
		"d";"e"**\dir{-},
		"e";"f"**\dir{-}
	\end{xy}\end{aligned}	$
	to a linear combination of graphs of the form 
	$\begin{aligned}\begin{xy}    
		(-2,2)*+UR{\scriptstyle \cdots}="x3",
		(6,-2)*+U{\scriptstyle y_{i,\ell_i}}="a",  
		(10,2)*{\scriptstyle x}="a1",
		(5,4)*{\scriptstyle \cdots}="a2",
		(10,6)*{\scriptstyle x}="a3",
		(18,-2)*+U{\scriptstyle y_{i+1,1}}="b",
		(27,2)*+UR{\scriptstyle \cdots}="c",
		(14,2)*{\scriptstyle x}="b1",
		(20,4)*{\scriptstyle \cdots}="b2",
		(14,6)*{\scriptstyle x}="b3",
		"x3";"a"**\dir{-},
		"a";"b"**\dir{-},    
		"b";"c"**\dir{-},   
		"a";"a1"**\dir{-},
		"a1";"a2"**\dir{-},
		"a2";"a3"**\dir{-},
		"b";"b1"**\dir{-},
		"b1";"b2"**\dir{-},
		"b2";"b3"**\dir{-},
	\end{xy}\end{aligned}$ 
	with strings of $x$ vertices of varying lengths
	off of the $y_{i,\ell_i}$ and $y_{i+1,1}$ vertices. 
	Similarly, applying Arnold and Proposition~\ref{L:convert simple words} 
	converts the graph
	$\begin{aligned}\begin{xy}    
		(-3,2)*+UR{\scriptstyle \cdots}="x3",
		(6,-2)*+U{\scriptstyle y_{i,1}}="a",  
		(3,2)*{\scriptstyle x}="x11",
		(7,4)*{\scriptstyle \cdots}="x12",
		(13,1)*+U{\scriptstyle y_{i,2}}="b",     
		(19,-2)*+UR{\scriptstyle \cdots}="c", 
		(25,1)*+U{\scriptstyle y_{i,k}}="d",  
		(32,-2)*+U{\scriptstyle z}="e",
		(34,2)*{\scriptstyle x}="x21",
		(30,4)*{\scriptstyle \cdots}="x22",
		(39,2)*+UR{\scriptstyle \cdots}="f",
		"x3";"a"**\dir{-},
		"a";"b"**\dir{-},    
		"b";"c"**\dir{-},   
		"c";"d"**\dir{-},
		"d";"e"**\dir{-},
		"e";"f"**\dir{-},
		"a";"x11"**\dir{-},
		"x11";"x12"**\dir{-},
		"e";"x21"**\dir{-},
		"x21";"x22"**\dir{-},
	\end{xy}\end{aligned}	$
	(with either $z = y_{i,\ell_i}, k=\ell_i-1$ or $z=y_{i+1,1}, k=\ell_i$)
	to a linear combination of graphs of the form
	$$\begin{aligned}\begin{xy}
		(-8,-1)*{\scriptstyle \cdots}="ldots",
		(0,0)*+U{\scriptstyle y_{i,1}}="y1",
		(-3,4)*{\scriptstyle x}="yx",
		(-7,5)*{\scriptstyle \cdots}="yxdots",
		(4,-5)*{\scriptstyle y_{i,2}}="y2",
		(11,-8)*{\scriptstyle \cdots}="y3",
		(19,-10)*{\scriptstyle y_{i,j}}="y4",
		(20,0)*{\scriptstyle z}="z1",
		(26,1)*{\scriptstyle \cdots}="rdots",
		(17,4)*{\scriptstyle x}="zx",
		(13,5)*{\scriptstyle \cdots}="zxdots",
		(23,-4)*{\scriptstyle y_{i,\sigma(j+1)}}="z2",
		(30,-7)*{\scriptstyle \cdots}="z3",
		(38,-9)*{\scriptstyle y_{i,\sigma(k)}}="z4",
		"ldots";"y1" **\dir{-},
		"y1";"yx" **\dir{-},
		"yx";"yxdots" **\dir{-},
		"y1";"y2" **\dir{-},
		"y2";"y3" **\dir{-},
		"y3";"y4" **\dir{-},
		"y1";"z1" **\dir{-},
		"z1";"rdots" **\dir{-},
		"zx";"z1" **\dir{-},
		"zx";"zxdots" **\dir{-},
		"z1";"z2" **\dir{-},
		"z2";"z3" **\dir{-},
		"z3";"z4" **\dir{-},
	\end{xy}\end{aligned}$$
	where $1\le j\le k$ and $\sigma$ is some permutation of $\{j+1,\dots,k\}$ (if $j=1$ then
	$y_{i,1}$ has no ``tail'', if $j=k$ then $z$ has no ``tail'').
	Note in particular that $y_{i,1}\dots y_{i,j} \le y_{i,1}\dots y_{i,j}\dots y_{i,k}$.
	Applying these two steps across all of 
	$\omega$ from left to right, combining ``tails'' using 
	Proposion~\ref{L:convert simple words}, and then recursing over the 
	grading $\mathcal{A}_n$ 
	(letters, simple words, simple words of simple words, etc),
	rewrites $\omega^* = \sum_i c_i\,\mathcal{G}\upsilon_i$ where
	$\upsilon_i \le \omega$.

	To check $\langle \omega^*,\,\mathcal{L}\omega\rangle = \pm 1$,
	it is enough to follow through the Arnold identity applications above, keeping
	track of which graphs will eventually lead to $\mathcal{G}\omega$.  At each step
	there is only a single such graph.
\end{proof}

\begin{proof}[Proof of Proposition~\ref{L:convert simple words}]
	Write $G$ for the graph
	$\begin{xy}
		(0,3)*{\scriptstyle y_1}="x1",
		(0,1)*{\scriptstyle \vdots},
		(0,-3)*{\scriptstyle y_k}="x3",
		(7,0)*+R{\scriptstyle z}="y",
		"y";"x1" **\dir{-}?>*\dir{>},
		"y";(1,0) **\dir{-}?>*\dir{>},
		"y";"x3" **\dir{-}?>*\dir{>},
	\end{xy}$ and order the letters $\{y_i\}_i\cup\{z\}$ so that $z$ is minimal.
	It is enough to show  
	$\displaystyle
	G \ = \ \sum_{\sigma\in \Sigma_k}
	\begin{xy}
		(-2,-2)*{\scriptstyle y_{\sigma(1)}}="x3",
		(2,2)*{\scriptstyle \cdots}="x2",
		(6,-2)*{\scriptstyle y_{\sigma(k)}}="x1",
		(10,2)*{\scriptstyle z}="y",
		"x1";"y" **\dir{-}?<*\dir{<},
		"x2";"x1" **\dir{-}?<*\dir{<},
		"x3";"x2" **\dir{-}?<*\dir{<},
	\end{xy}$ modulo Arnold and arrow-reversing, 
	assuming $z$ and $y_i$ are all unique.
	However, this follows from the pairing calculations
	$\langle G,\,\mathcal{L}zy_{\sigma(1)}\cdots y_{\sigma(k)}\rangle = 1$
	and $\langle G,\,\mathcal{L}\omega\rangle = 0$ for all other 
	$\omega \in \mathcal{B}$.
\end{proof}

\begin{remark}
	An independent proof of Theorem~\ref{P:lower triangular} could yield an alternate
	proof that the configuration basis is a basis without the use of graph coalgebras.
	However, showing \ref{P:lower triangular} entirely in the realm of associative
	algebras, never using graphs, appears difficult.
\end{remark}

\begin{remark}
	Theorem~\ref{P:lower triangular} gives an independent proof that the Lyndon-Shirshov words
	are a basis for the Lie coalgebra $\mathrm{E}V^*$.  This yields an alternate proof that 
	the Lyndon-Shirshov words are a multiplicative basis for the shuffle algebra \cite{Radf79}.
\end{remark}

\section{Comparison with Other Lie Bases}\label{S:comparison}

In practice it is often easier to compute pairings via 
Theorem~\ref{T:bracket/cobracket}
using the bar basis $\mathcal{B}^*$ than using the graph basis 
$\mathcal{G}\mathcal{B}$.
The cobracket of a bar expression of length $(n+m)$ has only two terms of the form 
$(\text{length $n$ expression})\otimes(\text{length $m$ expression})$ -- given by 
cutting either after position $n$ or after position $m$ (and anti-commuting). 
On the other
hand, the cobracket of a graph expression could have many such terms from cutting 
various edges.
However, moving to the
bar representation of Lie coalgebras gives up the monomial dual basis 
$\mathcal{G}\mathcal{B}$.

\begin{example}\label{E:2x2y2z bars2}
	Write $[\omega]$ for the classical bracketing method constructing a Hall basis 
	from Lyndon words.  Recall from \cite{Reut93} that this is recursively defined by 
	$[\omega] = [ [\alpha], [\beta] ]$ where $\omega=\alpha\beta$ is chosen so 
	that $\alpha$ is nonempty and $\beta$ is lexicographically minimal 
	(classically it is equivalent to choose $\beta$ to so that it is the 
	longest possible such Lyndon subword).
	For example, $[xxxyyyy] = [x, [x, [ [ [ [x, y], y], y], y] ] ]$ and 
	$[xxyyxyy] = [ [x, [ [ x, y], y] ], [ [x, y], y] ]$. 
	Using this basis, Example~\ref{E:3x4y bars1} is as in Figure~3. 
	For further comparison, in Figure~4 we include also the 
	pairing matrix corresponding to Example~\ref{E:2x2y2z bars}.

	\begin{figure}[h]\label{F:3x4y bars2}
	\begin{tabular}{c || c c c c c | c}
		& $[\omega_1]$
			& $[\omega_2]$
			& $[\omega_3]$
			& $[\omega_4]$
			& $[\omega_5]$ 
			& $\ell$ \\
			\hline
			\hline
		$\omega_1^*$ &  1 &    &    &    &     &  1 \\
		$\omega_2^*$ & -4 &  1 &    &    &     & -2 \\
		$\omega_3^*$ &  6 & -3 &  1 &    &     & -1 \\
		$\omega_4^*$ & -4 &  2 & -2 &  1 &     &    \\
		$\omega_5^*$ &    &  3 & -2 &  3 &  1  &  4 \\
	\end{tabular}

	\vskip 5pt
	$$\ell = [\omega_1] + 2\,[\omega_2] - [\omega_3] - 2\,[\omega_4] + 2\,[\omega_5]$$
	\caption{Pairing with Lyndon basis for words of Example~\ref{E:3x4y}.} 
	\end{figure}

	\begin{figure}[h]\label{F:2x2y2z bars2}
	\begin{tabular}{c || c c c c c c c c c c c c c c }
		& $[\omega_1]$
			& $[\omega_2]$
			& $[\omega_3]$
			& $[\omega_4]$
			& $[\omega_5]$
			& $[\omega_6]$
			& $[\omega_7]$
			& $[\omega_8]$
			& $[\omega_9]$
			& $[\omega_{10}]$ 
			& $[\omega_{11}]$ 
			& $[\omega_{12}]$ 
			& $[\omega_{13}]$ 
			& $[\omega_{14}]$ 
			 \\
			\hline \hline
		$\omega_1^*$ &
	1& & & & & & & & & & & & &  \\
		$\omega_2^*$ &
	-2&1& & & & & & & & & & & &  \\
		$\omega_3^*$ &
	 &-1&1& & & & & & & & & & &  \\
		$\omega_4^*$ &
	 &-1& &1& & & & & & & & & & 	 \\
		$\omega_5^*$ &
	2&1&-2&-2&1& & & & & & & & & 	 \\
		$\omega_6^*$ &
	-1& &1&1&-1&1& & & & & & & & 	 \\
		$\omega_7^*$ &
	 & &-1& & & &1& & & & & & & 	 \\
		$\omega_8^*$ &
	 & &2& &-1& &-2&1& & & & & & 	 \\
		$\omega_9^*$ &
	 & &-1& &1&-2&1&-1&1& & & & & 	 \\
		$\omega_{10}^*$ &
	 & & & & &1& & &-1&1& & & & 	 \\
		$\omega_{11}^*$ &
	 & & &-1&1&-2& &-1&2&-2&1& & & 	 \\
		$\omega_{12}^*$ &
	 &2& & &-2&4& &1&-2& & &1& & 	 \\
		$\omega_{13}^*$ &
	 & & &2&-1& & &1& & &-2&-1&1&  \\
		$\omega_{14}^*$ &
	 & & &-1&1&-2& & & & &-1& &1&1 \\
	\end{tabular}
	\caption{Pairing with Lyndon basis for words of Example~\ref{E:2x2y2z}.} 
%DATA:
% Classical basis (ordered):
%  [1,[1,[2,[[2,3],3]]]] [1,[[1,[2,3]],[2,3]]] [1,[[1,[[2,3],3]],2]] [1,[[1,3],[2,[2,3]]]] [1,[[[1,3],[2,3]],2]]
%  [1,[[[[1,3],3],2],2]] [[1,2],[1,[[2,3],3]]] [[1,2],[[1,3],[2,3]]] [[1,2],[[[1,3],3],2]] [[[1,2],2],[[1,3],3]]
%  [[1,[2,[2,3]]],[1,3]] [[1,[2,3]],[[1,3],2]] [[[1,[2,3]],2],[1,3]] [[1,3],[[[1,3],2],2]]
% Lyndon basis (ordered):
%  112233 112323 112332 113223 113232 113322 121233 121323 121332 122133 122313 123132 123213 131322
% Pairing matrix:
%	 1	 0	 0	 0	 0	 0	 0	 0	 0	 0	 0	 0	0	 0	
%	-2	 1	 0	 0	 0	 0	 0	 0	 0	 0	 0	 0	0	 0	
%	 0	-1	 1	 0	 0	 0	 0	 0	 0	 0	 0	 0	0	 0	
%	 0	-1	 0	 1	 0	 0	 0	 0	 0	 0	 0	 0	0	 0	
%	 2	 1	-2	-2	 1	 0	 0	 0	 0	 0	 0	 0	0	 0	
%	-1	 0	 1	 1	-1	 1	 0	 0	 0	 0	 0	 0	0	 0	
%	 0	 0	-1	 0	 0	 0	 1	 0	 0	 0	 0	 0	0	 0	
%	 0	 0	 2	 0	-1	 0	-2	 1	 0	 0	 0	 0	0	 0	
%	 0	 0	-1	 0	 1	-2	 1	-1	 1	 0	 0	 0	0	 0	
%	 0	 0	 0	 0	 0	 1	 0	 0	-1	 1	 0	 0	0	 0	
%	 0	 0	 0	-1	 1	-2	 0	-1	 2	-2	 1	 0	0	 0	
%	 0	 2	 0	 0	-2	 4	 0	 1	-2	 0	 0	 1	0	 0	
%	 0	 0	 0	 2	-1	 0	 0	 1	 0	 0	-2	-1	1	 0	
%	 0	 0	 0	-1	 1	-2	 0	 0	 0	 0	-1	 0	1	 1
	\end{figure}

\end{example}

It immediately follows from \cite[Thm. 5.1]{Reut93} and 
Proposition~\ref{P:config as coeff} that 
the configuration pairing of $\mathcal{B}^*$ and $[\mathcal{B}]$ is 
always lower triangular with 1 on the diagonal, similar to Theorem~\ref{P:lower triangular}
for the configuration basis.
Other bases, such as the right-normed basis ${\pmb [}T_X{\pmb ]}$ of \cite{Chib06} do not
satisfy a triangularity theorem such as Theorem~\ref{P:lower triangular}.  This is
verified via explicit pairing calculations.

\begin{example}
	The basis $\llbracket \mathcal{B}\rrbracket$ of \cite{Chib06} satisfies a triangular
	pairing theorem \cite[Prop.~4.1]{Chib06}.  Since \cite{Chib06} uses Shirshov's ordering
	convention for Lyndon-Shirshov words, we reverse the ordering of the alphabet $\mathcal{A}_0$ 
	for comparison with other bases.
	Chibrikov's basis is very similar to the
	configuration basis -- when the leading letter is repeated few times, many of these 
	basis elements will differ by only a sign, for example 
	$\mathcal{L}xyzyxz = [ [z, x], [ [ [y, x], z], y] ]$ and 
	$\llbracket xyzyxz \rrbracket = [ [ [ [x, y], z], y], [x, z] ]$.
	For comparison, we give its analogous pairing matrices below.  
	(Below, we use the reverse ordering on the generators of $V$ to account for the use
	of Shirshov's definition of $\mathcal{B}$ in \cite{Chib06}.)
	
%DATA: 
% Chibrikov RN basis:
%  [3,[3,[2,[1,[2,1]]]]] [3,[2,[3,[1,[2,1]]]]] [2,[3,[3,[1,[2,1]]]]] [3,[2,[2,[1,[3,1]]]]] [2,[3,[2,[1,[3,1]]]]]
%  [2,[2,[3,[1,[3,1]]]]] [3,[1,[2,[3,[2,1]]]]] [3,[2,[1,[3,[2,1]]]]] [2,[3,[1,[3,[2,1]]]]] [3,[1,[3,[2,[2,1]]]]]
%  [1,[3,[3,[2,[2,1]]]]] [2,[1,[3,[3,[2,1]]]]] [1,[3,[2,[3,[2,1]]]]] [2,[1,[3,[2,[3,1]]]]
% Lyndon basis:
%  112233 112323 112332 113223 113232 113322 121233 121323 121332 122133 122313 123132 123213 131322
% Pairing Matrix: 
%	 1 	 0	 0	0	0	 0	 0	 0	 0	 1	 1	 0	 0	 0	
%	 0	 1	 0	0	0	 0	 1	 1	 0	 0	 0	 0	 1	 0	
%	 0	 0	 1	0	0	 0	 0	 0	 1	 0	 0	 1	 0	 0	
%	 0	 0	 0	1	0	 0	 0	 0	 0	 0	 0	 0	 0	 0	
%	 0	 0	 0	0	1	 0	 0	 0	 0	 0	 0	 0	 0	 1	
%	 0	 0	 0	0	0	 1	 0	 0	 0	 0	 0	 0	 0	 0	
%	-2	 0	 0	0	0	 0	-1	 0	 0	-2	-2	 0	-1	 0	
%	 0	-2	 0	0	0	 0	-1	-1	 0	 0	 0	 0	-1	 0	
%	 0	 0	-2	0	0	 0	 0	 0	-1	 0	 0	-1	 0	-1	
%	 0	 0	 0	0	0	 0	 1	 0	 0	 1	 1	 0	 1	 0	
%	 0	 0	 0	0	0	 0	 0	 0	 0	-1	 0	 0	 0	 0	
%	 0	 0	 0	0	0	 0	 0	 0	-1	 0	 0	 0	 0	 1	
%	 0	 0	 0	0	0	 0	-2	 0	 0	 0	 0	 0	-1	 0	
%	 0	 0	 0	0	0	-2	 0	 0	 0	 0	 0	 0	 0	-1
%
% Pairing with T_X words:
%	2	0	0	0	0	0	 1	0	0	2	2	0	 1	 0	
%	0	2	0	0	0	0	 1	1	0	0	0	0	 1	 0	
%	0	0	2	0	0	0	 0	0	1	0	0	1	 0	 1	
%	0	0	0	2	0	0	 1	1	0	1	2	0	 1	 0	
%	0	0	0	0	2	0	 0	0	1	0	0	2	 1	 1	
%	0	0	0	0	0	2	 0	0	0	0	0	0	 0	 1	
%	0	0	0	0	0	0	 2	0	0	0	0	0	 1	 0	
%	0	0	0	0	0	0	-1	1	0	0	0	0	-1	 0	
%	0	0	0	0	0	0	 0	0	1	0	0	0	 0	-1	
%	0	0	0	0	0	0	 0	0	0	1	0	0	 0	 0	
%	0	0	0	0	0	0	 0	0	0	0	1	0	 0	 0	
%	0	0	0	0	0	0	 0	0	0	0	0	1	 0	 0	
%	0	0	0	0	0	0	 0	0	0	0	0	0	 1	 0	
%	0	0	0	0	0	0	 0	0	0	0	0	0	 1	 2	
%

\begin{figure}[h]\label{F:2x2y2z bars3}
	\begin{tabular}{c || c c c c c c c c c c c c c c }
		& $\llbracket\omega_1\rrbracket$
			& $\llbracket\omega_2\rrbracket$
			& $\llbracket\omega_3\rrbracket$
			& $\llbracket\omega_4\rrbracket$
			& $\llbracket\omega_5\rrbracket$
			& $\llbracket\omega_6\rrbracket$
			& $\llbracket\omega_7\rrbracket$
			& $\llbracket\omega_8\rrbracket$
			& $\llbracket\omega_9\rrbracket$
			& $\llbracket\omega_{10}\rrbracket$ 
			& $\llbracket\omega_{11}\rrbracket$ 
			& $\llbracket\omega_{12}\rrbracket$ 
			& $\llbracket\omega_{13}\rrbracket$ 
			& $\llbracket\omega_{14}\rrbracket$ 
			 \\
			\hline \hline
$\omega_1^*$
&	 1&	  &	  &	 & &  &  & &  &  & & & & 	\\
$\omega_2^*$
&	  &	 1&	  &	 & &  &  & &  &  & & & & 	\\
$\omega_3^*$
&	  &	  &	 1&	 & &  &  & &  &  & & & & 	\\
$\omega_4^*$
&	  &	  &	  &	1& &  &  & &  &  & & & & 	\\
$\omega_5^*$
&	  &	  &	  &	 &1&  &  & &  &  & & & & 	\\
$\omega_6^*$
&	  &	  &	  &	 & & 1&  & &  &  & & & & 	\\
$\omega_7^*$
&	-2&	  &	  &	 & &  & 1& &  &  & & & & 	\\
$\omega_8^*$
&	  &	-2&	  &	 & &  &  &1&  &  & & & & 	\\
$\omega_9^*$
&	  &	  &	-2&	 & &  &  & & 1&  & & & & 	\\
$\omega_{10}^*$
&	  &	  &	  &	 & &  &-1& &  & 1& & & & 	\\
$\omega_{11}^*$
&	  &	  &	  &	 & &  &  & &  &-1&1& & & 	\\
$\omega_{12}^*$
&	  &	  &	  &	 & &  &  & &-1&  & &1& & 	\\
$\omega_{13}^*$
&	  &	  &	  &	 & &  & 2& &  &  & & &1& 	\\
$\omega_{14}^*$
&	  &	  &	  &	 & &-2&  & &  &  & & & &1	\\
\end{tabular}
\caption{Pairing with the Lyndon basis for words of Example~\ref{E:2x2y2z}.}
\end{figure}
% Chibrikov other basis:
%  [[[[1,[1,2]],2],3],3] [[[[1,[1,2]],3],2],3] [[[[1,[1,2]],3],3],2] [[[[1,[1,3]],2],2],3] [[[[1,[1,3]],2],3],2]
%  [[[[1,[1,3]],3],2],2] [[[1,2],[[1,2],3]],3] [[[[1,2],[1,3]],2],3] [[[[1,2],[1,3]],3],2] [[[[1,2],2],[1,3]],3]
%  [[[[1,2],2],3],[1,3]] [[[[1,2],3],[1,3]],2] [[[[1,2],3],2],[1,3]] [[[1,3],[[1,3],2]],2]
% Lyndon basis:
%  112233 112323 112332 113223 113232 113322 121233 121323 121332 122133 122313 123132 123213 131322
% Pairing Matrix: 
%	 1	 0	 0	0	0	 0	 0	0	 0	 0	0	0	0	0	
%	 0	 1	 0	0	0	 0	 0	0	 0	 0	0	0	0	0	
%	 0	 0	 1	0	0	 0	 0	0	 0	 0	0	0	0	0	
%	 0	 0	 0	1	0	 0	 0	0	 0	 0	0	0	0	0	
%	 0	 0	 0	0	1	 0	 0	0	 0	 0	0	0	0	0	
%	 0	 0	 0	0	0	 1	 0	0	 0	 0	0	0	0	0	
%	-2	 0	 0	0	0	 0	 1	0	 0	 0	0	0	0	0	
%	 0	-2	 0	0	0	 0	 0	1	 0	 0	0	0	0	0	
%	 0	 0	-2	0	0	 0	 0	0	 1	 0	0	0	0	0	
%	 0	 0	 0	0	0	 0	-1	0	 0	 1	0	0	0	0	
%	 0	 0	 0	0	0	 0	 0	0	 0	-1	1	0	0	0	
%	 0	 0	 0	0	0	 0	 0	0	-1	 0	0	1	0	0	
%	 0	 0	 0	0	0	 0	 2	0	 0	 0	0	0	1	0	
%	 0	 0	 0	0	0	-2	 0	0	 0	 0	0	0	0	1	

\end{example}

Note that the Lyndon basis $\mathcal{B}^*$ does not in general 
have a dual monomial basis of bracket expressions.  
This can be verified by computing the pairings of the Lyndon basis
with all bracket expressions in the vector subspace where $x$ is repeated twice and
$y$ is repeated three times.

\begin{figure}[ht]
\begin{tabular}{c || c c c c }
	 & $[[[[x,y],x],y],y]$ & $[[[[x,y],y],x],y]$ & $[[[[x,y],y],y],x]$ & $[[[x,y],y],[x,y]]$ \\
	 \hline \hline
$(xxyyy)^*$ & -1& -1& -1& \\ 
$(xyxyy)^*$ &  2&  2&  3& -1\\
\end{tabular}
\caption{Lyndon words have no dual basis of monomials.}
\end{figure}

\section{A New Shuffle Basis}\label{S:new basis}

Work similar to Theorem~\ref{T:main} and \ref{P:lower triangular} can be used to construct 
other bases of associative words for $\mathrm{E}V^*$ similar to the Lyndon-Shirshov
words.  This also yields new multiplicative bases for the shuffle algebra.

We use the ordering on words called {\it degree-lexicographic} or {\it deg-lex} by \cite{Chib06}.
In this ordering, $\omega < \upsilon$ if and only if either $\omega$ has less letters 
than $\upsilon$, or else $\omega$ and $\upsilon$ have the same number of letters and $\omega < \upsilon$
lexicographically.  For a finite alphabet, this is equivalent to using the ordering of letters 
to view words as numbers; e.g. in the alphabet $\{1<2\}$ we have $2 < 12 < 21 < 112$.

Recall (see Example~\ref{E:A_n}) that words have unique expression as a word 
of compatible simple words.  
Write $\prec$ for the ordering of $\mathcal{A}$ given by the {\it deg-lex} ordering of words in the
{\it deg-lex} ordered alphabet of simple words
(thus $(13322) \prec (13)(122) \prec (122)(13)$ unlike when working lexicographically).  It is clear that
$\prec$ is a total order on $\mathcal{A}$.

\begin{definition}
	Let $\hat{\mathcal{B}}$ be the set of finite words which have minimal $\prec$ ordering among their cyclic 
	permutations.  Define $\hat{\mathcal{B}}_m = \hat{\mathcal{B}} \cap \mathcal{A}_m$ and
	$\mathcal{L}\hat{\mathcal{B}}$, $\mathcal{G}\hat{\mathcal{B}}$ the same as in Section~\ref{S:main}.
\end{definition}

The set $\hat{\mathcal{B}}_m$ satisfies Proposition~\ref{P:Witt} and Lemma~\ref{L:order of B} 
which is enough to
make the proof of Theorem~\ref{T:main} apply.  

\begin{theorem}
	Bracket expressions $\mathcal{L}\hat{\mathcal{B}}$ and graph expressions $\mathcal{G}\hat{\mathcal{B}}$ 
	are dual vector space bases bases for $\mathrm{L}V$ and $\mathrm{E}V^*$.  

	Furthermore, Lie bracket expressions, when written in terms of $\mathcal{L}\hat{\mathcal{B}}$, have
	integer coefficients.
\end{theorem}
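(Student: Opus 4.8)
The plan is to reuse, almost verbatim, the proofs of Theorem~\ref{T:main} and Corollary~\ref{C:integer coefficients}, after checking that the two structural inputs they rely on---Proposition~\ref{P:Witt} and Lemma~\ref{L:order of B}---hold with $\mathcal{B}$ replaced by $\hat{\mathcal{B}}$ and the lexicographic order replaced by $\prec$. The point is that the proof of Theorem~\ref{T:main} uses no feature of the lexicographic order beyond what is packaged into Lemma~\ref{L:order of B} (the recursive description of membership in $\mathcal{B}_m$) together with the dimension count of Proposition~\ref{P:Witt}, and Corollary~\ref{C:integer coefficients} uses only that the pairing is perfect and that the configuration pairing of a graph with a tree is an integer.

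First I would establish the analogue of Proposition~\ref{P:Witt} for $\hat{\mathcal{B}}$. This is the same counting argument: since $\prec$ is a total order on $\mathcal{A}$, every primitive (aperiodic) cyclic-equivalence class of words of length $n$ has exactly one $\prec$-minimal representative, while a non-primitive word coincides with one of its nontrivial rotations and hence is never strictly $\prec$-minimal among them. Thus $\#\{\omega\in\hat{\mathcal{B}}:\ |\omega|=n\}$ equals the number of primitive necklaces of length $n$ over a $d$-letter alphabet, namely $\frac{1}{n}\sum_{m\mid n}\mu(m)\,d^{\,n/m}$.

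Next I would establish the analogue of Lemma~\ref{L:order of B}: $\omega^k\psi_1\cdots\psi_\ell\in\hat{\mathcal{B}}_m$ if and only if $\omega,\psi_i\in\hat{\mathcal{B}}_{m-1}$ and $\omega\prec\psi_i$ for all $i$. Following the proof sketch of Lemma~\ref{L:order of B}, it suffices to induct on $m$ and check that the order $\prec$ on compatible words of $\mathcal{A}_{m-1}$, read as words in the $\prec$-ordered alphabet of compatible words of $\mathcal{A}_{m-2}$, agrees with $\prec$ on the underlying words of $\mathcal{A}$; the base case is that a simple word $x^k y_1\cdots y_\ell$ is $\prec$-minimal among its cyclic rotations exactly when $x$ is the strict minimum of the letters occurring in it. Granting this, the proof of Theorem~\ref{T:main} applies without change: for $\omega,\upsilon\in\hat{\mathcal{B}}$, a nonzero term of $\langle\mathcal{G}\omega,\mathcal{L}\upsilon\rangle$ forces, via the shape of $\mathcal{G}$ and $\mathcal{L}$ and the $\hat{\mathcal{B}}$-analogue of Lemma~\ref{L:order of B} applied level by level, a matching of the $\mathcal{A}_m$-subwords of $\omega$ and $\upsilon$, hence $\omega=\upsilon$, while $\langle\mathcal{G}\omega,\mathcal{L}\omega\rangle=1$ by the same recursion $\langle\mathcal{G}x^ky_1\cdots y_\ell,\mathcal{L}x^ky_1\cdots y_\ell\rangle=1$. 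So $\langle\mathcal{G}\omega,\mathcal{L}\upsilon\rangle=\delta_{\omega\upsilon}$; combining this with the count just obtained and with the known dimensions of $\mathrm{L}V$ and $\mathrm{E}V^*$ (Theorem~\ref{T:perfect pair}) shows $\mathcal{L}\hat{\mathcal{B}}$ and $\mathcal{G}\hat{\mathcal{B}}$ are linearly independent, hence dual bases.

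Finally, the integrality statement is proved exactly as Corollary~\ref{C:integer coefficients}: if $\ell\in\mathrm{L}V$ is a Lie bracket expression, then by duality its $\mathcal{L}\omega$-coefficient in the basis $\mathcal{L}\hat{\mathcal{B}}$ is $\langle\mathcal{G}\omega,\ell\rangle$, and by Definition~\ref{D:configuration pairing} the configuration pairing of a graph with a tree is a finite sum of products of signs and of the integer pairings $\langle v_i^*,v_j\rangle=\delta_{ij}$, hence lies in $\mathbb{Z}$. The one place where genuine care is needed is the inductive verification that $\prec$ is compatible with the iterated simple-word grading (the base case and the inductive step in the paragraph above); everything else is a transcription of arguments already given.
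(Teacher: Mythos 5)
Your proposal is correct and takes essentially the same route as the paper, which simply observes that $\hat{\mathcal{B}}$ satisfies the analogues of Proposition~\ref{P:Witt} and Lemma~\ref{L:order of B} and that this is all the proof of Theorem~\ref{T:main} (and of Corollary~\ref{C:integer coefficients}) actually uses. You supply more detail than the paper does on the two verifications (the necklace count for $\prec$-minimal rotations and the compatibility of $\prec$ with the iterated simple-word grading), which is exactly where the care is needed.
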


In the proof of 
Theorem~\ref{P:lower triangular}, applications of the Arnold identity and 
Proposition~\ref{L:convert simple words} and Corollary~\ref{C:convert simple words} reduce the
$\prec$ ordering as well as the lexicographical ordering, so an analog of 
Theorem~\ref{P:lower triangular} holds for $\hat{\mathcal{B}}$.

\begin{theorem}\label{P:hat lower triangular}
	If $\omega \prec \upsilon \in \hat{\mathcal{B}}$ then 
	$\langle\omega^*,\,\mathcal{L}\upsilon\rangle = 0$.
	Furthermore
	$\langle\omega^*,\,\mathcal{L}\omega\rangle = \pm 1$.
\end{theorem}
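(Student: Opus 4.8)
The plan is to run the proof of Theorem~\ref{P:lower triangular} essentially verbatim, the only new content being a check that the rewriting moves used there are non-increasing for $\prec$ as well as for the lexicographic order. First I would use the fact that $\mathcal{G}\hat{\mathcal{B}}$ and $\mathcal{L}\hat{\mathcal{B}}$ are dual bases (the analogue of Theorem~\ref{T:main}, which holds because $\hat{\mathcal{B}}$ satisfies Proposition~\ref{P:Witt} and Lemma~\ref{L:order of B}). Under this duality, $\langle\omega^*,\,\mathcal{L}\upsilon\rangle$ is exactly the coefficient of $\mathcal{G}\upsilon$ in the expansion of the long graph $\omega^*$ in the basis $\mathcal{G}\hat{\mathcal{B}}$. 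So the first assertion becomes the statement that $\omega^* = \sum_i c_i\,\mathcal{G}\upsilon_i$ with every $\upsilon_i \preceq \omega$, and the second becomes the statement that the coefficient $c_i$ for which $\upsilon_i = \omega$ equals $\pm 1$.

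Next I would carry out the rewriting procedure of the proof of Theorem~\ref{P:lower triangular} on $\omega^*$: apply the Arnold identity together with Proposition~\ref{L:convert simple words} and Corollary~\ref{C:convert simple words}, sweeping through the simple subwords of $\omega$ from left to right while collecting pendant strings of repeated letters, and then recursing over the grading $\mathcal{A}_0\subset\mathcal{A}_1\subset\cdots$ (letters, simple words, simple words of simple words, and so on). Since $\omega$ is finite it lies in $\hat{\mathcal{B}}_m$ for some $m$, so the procedure terminates in an expression $\sum_i c_i\,\mathcal{G}\upsilon_i$. The book-keeping for which intermediate graphs can still lead to $\mathcal{G}\omega$ is identical to that in the proof of Theorem~\ref{P:lower triangular}: at every stage of the rewriting there is exactly one such graph, and following its coefficient through the process yields $\langle\omega^*,\,\mathcal{L}\omega\rangle = \pm 1$.

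The one step that genuinely needs attention is verifying $\upsilon_i \preceq \omega$ for $\prec$ rather than merely lexicographically. This should follow by inspecting the three elementary moves at each recursion level. Corollary~\ref{C:convert simple words} only permutes a block of equal vertices, so it leaves the underlying word of each graph, and hence its $\prec$-position, unchanged. Each appeal to Proposition~\ref{L:convert simple words} (and the Arnold identity), as they are used in the proof of Theorem~\ref{P:lower triangular}, replaces a simple (sub)word $y_1\cdots y_k$ by a truncation $y_1\cdots y_j$ together with a pendant tail formed from a permutation of $y_{j+1},\dots,y_k$; the corresponding word at the relevant level either has the same multiset of letters (when $j=k$, in which case the move is an equality up to sign) or strictly fewer letters. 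Because $\prec$ is a degree-first refinement, a proper truncation is strictly $\prec$-smaller and a permutation of a fixed letter-multiset is $\prec$-neutral, so $\upsilon_i \preceq \omega$. The main obstacle is therefore pure book-keeping: one must confirm that at each level of the recursion the ``ordering of compatible words'' governing Lemma~\ref{L:order of B} for $\hat{\mathcal{B}}$ is precisely the restriction of $\prec$ — which is exactly how $\prec$ was defined, as deg-lex over the deg-lex-ordered alphabet of simple words — and that the moves keep all intermediate graphs within the family indexed by such words, so that the length-first comparison applies uniformly as one climbs the grading. Granting this, the two assertions together give Theorem~\ref{P:hat lower triangular}.
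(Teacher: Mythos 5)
Your proposal matches the paper's argument: the paper proves this theorem exactly by rerunning the proof of Theorem~\ref{P:lower triangular} and observing that the applications of the Arnold identity, Proposition~\ref{L:convert simple words}, and Corollary~\ref{C:convert simple words} reduce the $\prec$ ordering as well as the lexicographic one, with the analogue of Theorem~\ref{T:main} for $\hat{\mathcal{B}}$ supplying the duality. Your additional remarks on why truncation is strictly $\prec$-decreasing and permutation of a fixed letter-multiset is $\prec$-neutral only make explicit what the paper leaves as an assertion.
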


\begin{corollary}
	The expressions $\hat{\mathcal{B}}^*$ are a vector space basis of $\mathrm{E}V^*$. 
\end{corollary}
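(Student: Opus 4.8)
The plan is to obtain the Corollary from Theorem~\ref{P:hat lower triangular} by the same finite–dimensional linear algebra argument sketched in the Remark following Theorem~\ref{P:lower triangular}. Since the configuration pairing respects the multidegree grading (a fixed number of occurrences of each generator $v_i$), it suffices to work inside a single multidegree component, where every vector space in sight is finite dimensional. So fix a multidegree and list the elements of $\hat{\mathcal{B}}$ of that multidegree as $\omega_1 \prec \omega_2 \prec \cdots \prec \omega_N$; this is legitimate because $\prec$ is a total order on $\mathcal{A}$ and only finitely many words occur in a given multidegree.

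Next I would form the $N\times N$ matrix $M$ with $M_{ij} = \langle \omega_i^*,\, \mathcal{L}\omega_j\rangle$. Theorem~\ref{P:hat lower triangular} says exactly that $M_{ij}=0$ whenever $\omega_i \prec \omega_j$ and that $M_{ii}=\pm 1$, so $M$ is triangular with unit entries on the diagonal and hence $\det M = \pm 1 \neq 0$. Linear independence of $\hat{\mathcal{B}}^*$ in this component then follows formally: if $\sum_i c_i\,\omega_i^* = 0$ in $\mathrm{E}V^*$, pairing against each $\mathcal{L}\omega_j$ gives $\sum_i c_i M_{ij}=0$, i.e. $M^{\mathsf T}c=0$, whence $c=0$ because $M$ is invertible.

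Finally I would count. By Proposition~\ref{P:Witt} — which, as the text notes, holds verbatim for $\hat{\mathcal{B}}$ — the number $N$ of elements of $\hat{\mathcal{B}}$ in the chosen multidegree equals the dimension of that multidegree component of $\mathrm{L}V$, and this equals the dimension of the corresponding component of $\mathrm{E}V^*$ because the configuration pairing between $\mathrm{L}V$ and $\mathrm{E}V^*$ is perfect (Theorem~\ref{T:perfect pair}; alternatively, and without any characteristic hypothesis, because $\mathcal{L}\mathcal{B}$ and $\mathcal{G}\mathcal{B}$ are already dual bases by Theorem~\ref{T:main}). A linearly independent family of the right cardinality in a finite–dimensional space is a basis, so $\hat{\mathcal{B}}^*$ is a basis of each multidegree component; assembling over all multidegrees gives the Corollary.

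There is no real mathematical obstacle here — the content is entirely carried by Theorem~\ref{P:hat lower triangular}. The only things to be careful about are bookkeeping points: restricting to a single multidegree so that ``triangular matrix'' is meaningful and the index set is finite, and noting that the dimension count is available in all characteristics via Theorem~\ref{T:main} (so that one does not secretly re-invoke the characteristic-zero hypothesis that the earlier Remark already observes can be dropped).
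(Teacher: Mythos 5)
Your argument is correct and is exactly the one the paper intends (the corollary is left without an explicit proof, but the remark following Theorem~\ref{P:lower triangular} indicates precisely this route): the triangularity and $\pm 1$ diagonal from Theorem~\ref{P:hat lower triangular} give linear independence of $\hat{\mathcal{B}}^*$ within each multidegree component, and the dimension count via the Witt formula (or via the dual bases $\mathcal{L}\hat{\mathcal{B}}$, $\mathcal{G}\hat{\mathcal{B}}$) upgrades this to a basis. Your attention to working one multidegree at a time and to avoiding the characteristic-zero hypothesis is appropriate and consistent with the paper's remarks.
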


\begin{corollary}
	The words of $\hat{\mathcal{B}}$ are a multiplicative basis for the
	shuffle algebra.
\end{corollary}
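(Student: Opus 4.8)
The plan is to mimic the classical argument (Radford's theorem \cite{Radf79}) that Lyndon-Shirshov words form a multiplicative basis of the shuffle algebra, replacing the lexicographic order with the order $\prec$. First I would recall the setup: $\hat{\mathcal{B}}^*$ is already known to be a vector space basis of $\mathrm{E}V^*$ by the preceding corollary, and $\mathrm{E}V^*$ in the Harrison/bar model is precisely $\mathrm{T}V^*$ modulo the span of nontrivial shuffles (by \cite[Prop.~3.21]{SiWa07} as recalled in Section~\ref{S:Lyndon}); equivalently the shuffle algebra on $V^*$ has underlying vector space $\mathrm{T}V^*$ and $\mathrm{E}V^*$ is its indecomposables. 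So to say the words of $\hat{\mathcal{B}}$ freely generate the shuffle algebra as a commutative algebra is, by a standard argument, equivalent to three facts: (i) every word is a polynomial (under shuffle) in $\hat{\mathcal{B}}$-words; (ii) the $\hat{\mathcal{B}}$-words are shuffle-algebraically independent; and (iii) a counting/dimension match. I would obtain (iii) for free: since $\hat{\mathcal{B}}$ satisfies the Witt formula (Proposition~\ref{P:Witt}), the number of $\hat{\mathcal{B}}$-words of each multidegree equals the rank of the indecomposables in that multidegree, which by the usual generating-function identity is exactly the Poincar\'e series of a free commutative algebra on that many generators in each multidegree.

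The technical heart is a Lyndon-style factorization theorem with respect to $\prec$. The key step I would carry out is: establish that $\prec$ is compatible with concatenation in the way needed, namely that if $\omega$ is $\prec$-minimal among its rotations (i.e. $\omega \in \hat{\mathcal{B}}$) and one writes its canonical factorization into compatible simple words, then $\omega$ admits a standard factorization $\omega = \omega_1 \omega_2$ with $\omega_1,\omega_2 \in \hat{\mathcal{B}}$ and $\omega_1 \prec \omega_2$ exactly as for Lyndon words. The subtlety is that $\prec$ is not the lexicographic order on letters but a two-level order built from the deg-lex order on simple words; so I would first prove that $\prec$ is a \emph{monoid order} adapted to the simple-word decomposition --- more precisely, that on words sharing a common initial letter (equivalently a common initial simple-word letter), $\prec$ behaves like a shift-adapted lexicographic order on the alphabet of simple words, so that Chen--Fox--Lyndon-type statements go through verbatim. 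Lemma~\ref{L:order of B} and the uniqueness of the simple-word decomposition (Example~\ref{E:A_n}) are the tools; I expect this verification to be mostly bookkeeping once one notes that the deg-lex order on simple words is itself a well-order compatible with the internal deg-lex order, so the nested induction in the proof of Lemma~\ref{L:order of B} applies.

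With the factorization theorem in hand, the rest follows the classical template. For (i) I would induct on word length using the $\prec$-order: given a word $u$ not in $\hat{\mathcal{B}}$, write the leftmost-longest $\hat{\mathcal{B}}$-factor $v$ at the front, $u = v\,u'$; then $v \shuffle u' = u + \sum(\text{$\prec$-larger words of the same multidegree})$, and recurse on the error terms, which terminates since each multidegree has finitely many words. For (ii), shuffle-independence, I would use the $\prec$-leading-term argument: for a $\hat{\mathcal{B}}$-word $w$ with $\prec$-decreasing factorization into $\hat{\mathcal{B}}$-words $w = w_1 \geq w_2 \geq \cdots \geq w_k$ (allowing repeats), the shuffle product $w_1 \shuffle \cdots \shuffle w_k$ has $\prec$-leading term (with nonzero integer coefficient) equal to the concatenation $w_1 w_2 \cdots w_k$, and distinct multisets give distinct concatenations; this shows the monomials in $\hat{\mathcal{B}}$-words are linearly independent in the shuffle algebra, hence the generators are algebraically independent. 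Combining (i), (ii) and the dimension count (iii), $\hat{\mathcal{B}}$ is a free polynomial (commutative) generating set for the shuffle algebra.

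The main obstacle I anticipate is precisely the second paragraph: showing that $\prec$, despite being a two-tier order rather than an honest lexicographic order on a single alphabet, still supports the Lyndon/Chen--Fox--Lyndon factorization lemmas. Once that is nailed down, the remaining steps are routine adaptations of \cite{Radf79}; indeed, as the excerpt already observes, Theorem~\ref{P:hat lower triangular} together with Proposition~\ref{P:config as coeff} gives an alternate, essentially immediate route --- the pairing matrix between $\hat{\mathcal{B}}^*$ and $\mathcal{L}\hat{\mathcal{B}}$ is triangular with $\pm 1$ on the diagonal, so one could instead deduce the multiplicative-basis statement by transporting the classical proof through the configuration pairing rather than redoing the order theory by hand.
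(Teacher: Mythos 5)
Your proposal buries the paper's actual argument in its final sentence and foregrounds a much harder one. The intended proof here is short: by the preceding corollary (itself immediate from Theorem~\ref{P:hat lower triangular}), $\hat{\mathcal{B}}^*$ is a vector space basis of $\mathrm{E}V^*$, and by \cite[Prop.~3.21]{SiWa07} as recalled in Section~\ref{S:Lyndon}, $\mathrm{E}V^*$ is precisely the quotient of the (positive-degree part of the) shuffle algebra by the span of nontrivial shuffles, i.e.\ its space of indecomposables. Since the shuffle algebra over a field of characteristic zero is a free commutative algebra (the classical content of \cite{Radf79}, or a consequence of the structure theory of graded commutative Hopf algebras), any set of homogeneous elements whose images form a basis of the indecomposables is automatically a free system of multiplicative generators. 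That is the whole proof; no factorization theory for $\prec$ is needed, and your dimension count (iii) is subsumed.

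The route you actually develop --- rerunning Radford's combinatorial argument with $\prec$ in place of the lexicographic order --- has a genuine gap at exactly the step you identify as its technical heart, and at least one of your intermediate claims is false as written. The order $\prec$ is a two-tier deg-lex order: words are compared first by the number of blocks in their simple-word decomposition and only then lexicographically in the block alphabet. Shuffling changes the block count: the shuffle product of $12$ with itself equals $2\cdot(1212)+4\cdot(1122)$, where $1212=(12)(12)$ has two blocks and $1122$ has one, so $1122\prec 1212$. Taking $u=1212\notin\hat{\mathcal{B}}$ with leftmost-longest $\hat{\mathcal{B}}$-prefix $v=12$ and $u'=12$, the error term $1122$ is $\prec$-\emph{smaller} than $u$, contradicting your step (i); more generally none of the Chen--Fox--Lyndon statements you invoke (standard factorization $\omega=\omega_1\omega_2$ with $\omega_1\prec\omega_2$, unique nonincreasing factorization, extremality of the concatenation in a shuffle monomial) ``go through verbatim,'' since deg-lex orders are not suffix-compatible, which is what the classical proofs use. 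This is precisely why the paper routes the triangularity through the Arnold identity and the configuration pairing in Theorem~\ref{P:hat lower triangular} rather than through word combinatorics; the remark following Theorem~\ref{P:lower triangular} explicitly notes that obtaining such triangularity purely in the realm of associative words ``appears difficult.'' To repair your write-up, promote your closing aside to the main argument, or else supply genuine proofs of the $\prec$-factorization lemmas.
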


\begin{example}
	In an alphabet with only two letters, $\mathcal{B}$ and $\hat{\mathcal{B}}$ are the same.

	In the vector subspace of brackets $x$, $y$, and $z$ each repeated twice, the change from
	$\mathcal{B}$ to $\hat{\mathcal{B}}$ affects the following words.
	$\omega_{11} = xyyzxz$ is replaced by $\hat{\omega}_{10} = xzxyyz$ and
	$\omega_{13} = xyzyxz$ is replaced by $\hat{\omega}_{11} = xzxyzy$ and 
	$\omega_{10}\cdots\omega_{14}$ are reordered.

	The corresponding portion of the pairing matrix for 
	$\hat{\mathcal{B}}^*$ and $\mathcal{L}\hat{\mathcal{B}}$ has the same 
	number of non-zero off-diagonal entries as that of $\mathcal{B}^*$ and 
	$\llbracket\mathcal{B}\rrbracket$.

	\begin{figure}[h]\label{F:2x2y2z bars4}
	\begin{tabular}{c || c c c c c c c c c c c c c c }
		& $\mathcal{L}\hat{\omega}_1$
		& $\mathcal{L}\hat{\omega}_2$
		& $\mathcal{L}\hat{\omega}_3$
		& $\mathcal{L}\hat{\omega}_4$
		& $\mathcal{L}\hat{\omega}_5$
		& $\mathcal{L}\hat{\omega}_6$
		& $\mathcal{L}\hat{\omega}_7$
		& $\mathcal{L}\hat{\omega}_8$
		& $\mathcal{L}\hat{\omega}_9$
		& $\mathcal{L}\hat{\omega}_{10}$ 
		& $\mathcal{L}\hat{\omega}_{11}$ 
		& $\mathcal{L}\hat{\omega}_{12}$ 
		& $\mathcal{L}\hat{\omega}_{13}$ 
		& $\mathcal{L}\hat{\omega}_{14}$ 
			 \\
			\hline \hline
	$\hat{\omega}_1^*$ &
 	 1& & & & & & & & & & & & & 	 \\
	$\hat{\omega}_2^*$ &
	 &1& & & & & & & & & & & & 	 \\
	 $\hat{\omega}_3^*$ &
	 & &1& & & & & & & & & & & 	 \\
	 $\hat{\omega}_4^*$ &
	 & & &1& & & & & & & & & & 	 \\
	 $\hat{\omega}_5^*$ &
	 & & & &1& & & & & & & & & 	 \\
	 $\hat{\omega}_6^*$ &
	 & & & & &1& & & & & & & & 	 \\
	 $\hat{\omega}_7^*$ &
	-2& & & & & &-1& & & & & & & 	 \\
	 $\hat{\omega}_8^*$ &
	 &-2& & & & & &-1& & & & & & 	 \\
	 $\hat{\omega}_9^*$ &
	 & &-2& & & & & &-1& & & & & 	 \\
	 $\hat{\omega}_{10}^*$ &
	 & & &-2& & & & & &-1& & & & 	 \\
	 $\hat{\omega}_{11}^*$ &
	 & & & &-2& & & & & &-1& & & 	 \\
	 $\hat{\omega}_{12}^*$ &
	 & & & & &-2& & & & & &-1& & 	 \\
	 $\hat{\omega}_{13}^*$ &
	 & & & & & &1&1&1& & & &-1& 	 \\
	 $\hat{\omega}_{14}^*$ &
	 & & & & & & &1&2& & & & &-1	 \\
	\end{tabular}
	\caption{Pairing analogous to Example~\ref{E:2x2y2z}.} 
\end{figure}

%DATA:  
% words: 112233 112323 112332 113223 113232 113322 121233 121323 121332 131223 131232 122133 123132
% brackets:
%  [[[[[2,1],1],2],3],3] [[[[[2,1],1],3],2],3] [[[[[2,1],1],3],3],2] [[[[[3,1],1],2],2],3] [[[[[3,1],1],2],3],2]
%  [[[[[3,1],1],3],2],2] [[[[2,1],3],3],[2,1]] [[[[3,1],2],3],[2,1]] [[[[3,1],3],2],[2,1]] [[[[2,1],2],3],[3,1]]
%  [[[[2,1],3],2],[3,1]] [[[[3,1],2],2],[3,1]] [[[3,1],3],[[2,1],2]] [[[3,1],2],[[2,1],3]]
% pairing matrix:
%	 1	 0	 0	 0	 0	 0	 0	 0	 0	 0	 0	 0	 0	 0
%	 0	 1	 0	 0	 0	 0	 0	 0	 0	 0	 0	 0	 0	 0
%	 0	 0	 1	 0	 0	 0	 0	 0	 0	 0	 0	 0	 0	 0
%	 0	 0	 0	 1	 0	 0	 0	 0	 0	 0	 0	 0	 0	 0
%	 0	 0	 0	 0	 1	 0	 0	 0	 0	 0	 0	 0	 0	 0
%	 0	 0	 0	 0	 0	 1	 0	 0	 0	 0	 0	 0	 0	 0
%	-2	 0	 0	 0	 0	 0	-1	 0	 0	 0	 0	 0	 0	 0
%	 0	-2	 0	 0	 0	 0	 0	-1	 0	 0	 0	 0	 0	 0
%	 0	 0	-2	 0	 0	 0	 0	 0	-1	 0	 0	 0	 0	 0
%	 0	 0	 0	-2	 0	 0	 0	 0	 0	-1	 0	 0	 0	 0
%	 0	 0	 0	 0	-2	 0	 0	 0	 0	 0	-1	 0	 0	 0
%	 0	 0	 0	 0	 0	-2	 0	 0	 0	 0	 0	-1	 0	 0
%	 0	 0	 0	 0	 0	 0	 1	 1	 1	 0	 0	 0	-1	 0		
%	 0	 0	 0	 0	 0	 0	 0	 1	 2	 0	 0	 0	 0	-1		

\end{example}

It is not possible to extend Chibrikov's definition of $\llbracket \omega \rrbracket$ to $\hat{\mathcal{B}}$; 
however the classical bracketing $[\omega]$ does extend to $\hat{\mathcal{B}}$.  In fact,
even the proof of 
the triangularity theorem \cite[Thm. 4.9]{Reut93} appears to extend to $[\hat{\mathcal{B}}]$.  
The result of pairing $\hat{\mathcal{B}}^*$ and $[\hat{\mathcal{B}}]$ 
is similar to Figure~4.

The basis $\mathcal{L}\hat{\mathcal{B}}$ has a good property with respect to vector space quotient maps.
Let $\phi:V\to W$ be a vector space quotient map, and suppose that $V$ and $W$ have ordered 
bases $\{v_1,\dots,v_n\}$ and $\{w_1,\dots,w_m\}$ compatible with $\phi$ so that 
$\phi:\{v_1,\dots,v_n\} \to \{w_1,\dots,w_m\}$ with $\phi(v_i) \le \phi(v_j)$ for $i<j$.  
Suppose further that $\phi^{-1}(w_1) = \{v_1\}$. 
Write $\hat{\mathcal{B}}_V$, $\hat{\mathcal{B}}_W$ for the sets of words $\hat{\mathcal{B}}$ 
with respect to the ordered alphabets $\{v_i\}$ and $\{w_j\}$.
Define the map $\hat{\phi}:\hat{\mathcal{B}}_V \to \hat{\mathcal{B}}_W\cup\{0\}$ by 
$\hat{\phi}(a_1\cdots a_k) = \phi(a_1)\cdots\phi(a_k)$ if $\phi(a_1)\cdots\phi(a_k)\in\hat{\mathcal{B}}_W$
and $0$ otherwise. 

\begin{proposition}
	The map $\mathrm{L}\phi:\mathrm{L}V\to \mathrm{L}W$ is given on the basis
	$\mathcal{L}\hat{\mathcal{B}}_V$ by 
	$\mathcal{L}\hat{\phi}:\mathcal{L}\hat{\mathcal{B}}_V \to \mathcal{L}\hat{\mathcal{B}}_W\cup\{0\}$ 
\end{proposition}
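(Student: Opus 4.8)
The plan is to expand $\mathrm{L}\phi(\mathcal{L}\omega)$ in the configuration basis $\mathcal{L}\hat{\mathcal{B}}_W$ and to show the expansion has at most one term. Since $\mathrm{L}\phi$ is induced by functoriality, it is the Lie algebra homomorphism that relabels each leaf $v_i$ of a tree in $\mathrm{Tr}(V)$ by $\phi(v_i)$; in particular $\mathrm{L}\phi(\mathcal{L}\omega)$ is the tree $\mathcal{L}\omega$ with pushed-forward leaf labels. By the analog of Theorem~\ref{T:main} for $\hat{\mathcal{B}}$, the graphs $\mathcal{G}\hat{\mathcal{B}}_W$ form the dual basis of $\mathcal{L}\hat{\mathcal{B}}_W$, so for $\upsilon\in\hat{\mathcal{B}}_W$ the coefficient of $\mathcal{L}\upsilon$ in $\mathrm{L}\phi(\mathcal{L}\omega)$ is the configuration pairing $\langle\mathcal{G}\upsilon,\,\mathrm{L}\phi(\mathcal{L}\omega)\rangle$, and it suffices to show this vanishes unless $\upsilon=\hat\phi(\omega)$, in which case it is $\pm1$.

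First I would record a functoriality property of the configuration pairing. Write $(\phi^{*})_{*}$ for the operation on $\mathrm{Gr}(W^{*})$ relabeling each vertex by $\phi^{*}\colon W^{*}\to V^{*}$. Reading off Definition~\ref{D:configuration pairing} directly — using that the numerical factor $\langle G,T\rangle_{\sigma}$ depends only on unlabeled data and that $\langle\phi^{*}\xi,\,a\rangle_{V}=\xi(\phi a)=\langle\xi,\,\phi a\rangle_{W}$ for $a\in V$ and $\xi\in W^{*}$ — one gets $\langle\gamma,\,\mathrm{L}\phi(\tau)\rangle_{W}=\langle(\phi^{*})_{*}\gamma,\,\tau\rangle_{V}$, so the coefficient above equals $\langle(\phi^{*})_{*}\mathcal{G}\upsilon,\,\mathcal{L}\omega\rangle_{V}$. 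Since the chosen bases are $\phi$-compatible, $\phi^{*}(w_{j}^{*})=\sum_{\phi(v_{i})=w_{j}}v_{i}^{*}$, and the hypothesis $\phi^{-1}(w_{1})=\{v_{1}\}$ gives the sharper $\phi^{*}(w_{1}^{*})=v_{1}^{*}$. Expanding $(\phi^{*})_{*}\mathcal{G}\upsilon$ by multilinearity in the labels presents it as a sum of labeled graphs, each with the same underlying graph as $\mathcal{G}\upsilon$ and obtained by choosing, for every vertex of $\mathcal{G}\upsilon$ carrying a label $w_{j}^{*}$, a lift $v_{i}^{*}$ with $\phi(v_{i})=w_{j}$; the vertices carrying the minimal label $w_{1}^{*}$ — which, whenever $w_{1}$ occurs in $\upsilon$, are exactly the vertices forming the innermost $w_{1}^{*}$-chains — admit only the lift $v_{1}^{*}$.

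Next I would run the matching analysis from the proof of Theorem~\ref{T:main} on each pairing $\langle\gamma',\,\mathcal{L}\omega\rangle_{V}$ appearing in this sum, where now the nonvanishing of $\prod_{v}\langle l_{\gamma'}(v),\,l_{\mathcal{L}\omega}(\sigma v)\rangle$ forces the leaf labels of $\mathcal{L}\omega$ to equal the lifted vertex labels of $\gamma'$, hence to agree with the original $\upsilon$-labels up to $\phi$. Exactly as there, a nonzero $\sigma$-term forces, level by level — first the innermost brackets $[\,y,x\,]$ and the $x$-chains, then the $\mathcal{A}_{1}$-subwords, then the $\mathcal{A}_{2}$-subwords, and so on up the $\mathcal{A}_{m}$-grading — the simple-word-of-simple-words structure of $\omega$ to match that of $\upsilon$ through $\sigma$; monotonicity of $\phi$ together with the rigidity of $w_{1}$ is what prevents a mismatch in this structure from being absorbed into the choice of lift. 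The conclusion is that the only surviving lift is the one in which every label of $\upsilon$ is lifted consistently with a single word, which forces $\upsilon=\phi(\omega)$ — in particular $\phi(\omega)\in\hat{\mathcal{B}}_{W}$, so $\hat\phi(\omega)\neq0$ — and contributes $\langle\mathcal{G}\phi(\omega),\,\mathcal{L}\omega\rangle_{V}$; a final bookkeeping computation paralleling the last paragraph of the proof of Theorem~\ref{T:main}, recursing over the $\mathcal{A}_{m}$-grading, evaluates this to $\pm1$. When no consistent lift produces a valid structure — equivalently, when $\phi(\omega)\notin\hat{\mathcal{B}}_{W}$, i.e. $\hat\phi(\omega)=0$ — every term vanishes and $\mathrm{L}\phi(\mathcal{L}\omega)=0$. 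In all cases $\mathrm{L}\phi(\mathcal{L}\omega)=\mathcal{L}\hat\phi(\omega)$.

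The hard part will be this re-run of the Theorem~\ref{T:main} matching argument with $\phi$-twisted labels: one must check that the inductive step up the $\mathcal{A}_{m}$-grading still goes through once equality of labels is replaced by ``having the same image under $\phi$'', and in particular that the only lifting freedom compatible with a nonzero pairing is the unique consistent lift, pinned down at the bottom by $\phi^{-1}(w_{1})=\{v_{1}\}$ and propagated upward by monotonicity. A cleaner alternative I would also try is to bypass graphs: combining Proposition~\ref{P:config as coeff} with the identity $p\circ\mathrm{L}\phi=(\mathrm{T}\phi)\circ p$, where $\mathrm{T}\phi\colon\mathrm{T}V\to\mathrm{T}W$ applies $\phi$ letterwise, reduces the claim to a statement about the expansion of $\mathcal{L}\omega$ in the word basis of $\mathrm{T}V$; the triangularity of Theorem~\ref{P:hat lower triangular} together with the fact that $\hat{\mathcal{B}}$ is a multiplicative basis of the shuffle algebra then controls which words can appear. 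That route still needs that $\prec$-comparison of cyclic shifts is monotone under $\phi$, which is once more where the hypothesis on $w_{1}$ does the work.
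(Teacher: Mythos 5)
The paper states this proposition without proof, so there is nothing of the author's to compare your argument against; I can only assess it on its own terms. Your skeleton is the natural one: expand $\mathrm{L}\phi(\mathcal{L}\omega)$ against the dual basis $\mathcal{G}\hat{\mathcal{B}}_W$, use the (easy and correct) adjunction $\langle\gamma,\,\mathrm{L}\phi(\tau)\rangle_W=\langle(\phi^*)_*\gamma,\,\tau\rangle_V$, and re-run the matching argument of Theorem~\ref{T:main}. But the step you explicitly defer as ``the hard part'' is not merely technical --- it is exactly where the argument, and arguably the statement, breaks. The matching argument of Theorem~\ref{T:main} forces the recursive simple-word structures of the graph and the tree to coincide level by level; after applying $\phi$, the recursive structure of $\phi(\omega)$ can genuinely differ from that of $\omega$, because $\phi$ can identify the base $\omega_0$ of some level of the decomposition with one of the non-base factors $\psi_i$. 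This is not excluded by monotonicity together with $\phi^{-1}(w_1)=\{v_1\}$: those hypotheses pin down only the bottom level, while $\omega_0\prec\psi_i$ with $|\omega_0|=|\psi_i|$ leaves room for $\phi(\omega_0)=\phi(\psi_i)$ at higher levels.

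Concretely, take $\phi(v_1)=w_1$, $\phi(v_2)=\phi(v_3)=w_2$ and $\omega=(v_1v_2)(v_1v_3)(v_1v_2v_2)$, which satisfies the recursive criterion of Lemma~\ref{L:order of B} for membership in $\hat{\mathcal{B}}_V$ since $v_1v_2\prec v_1v_3$ and $v_1v_2\prec v_1v_2v_2$. Then $\mathcal{L}\omega=\bigl[\bigl[[v_3,v_1],[v_2,v_1]\bigr],\bigl[[v_2,v_1],v_2\bigr]\bigr]$, and $\mathrm{L}\phi(\mathcal{L}\omega)$ contains the factor $\bigl[[w_2,w_1],[w_2,w_1]\bigr]=0$, so $\mathrm{L}\phi(\mathcal{L}\omega)=0$; yet $\phi(\omega)=w_1w_2w_1w_2w_1w_2w_2=(w_1w_2)^2(w_1w_2w_2)$ is a Lyndon word in two letters, hence lies in $\hat{\mathcal{B}}_W$, so $\hat{\phi}(\omega)\neq 0$ and $\mathcal{L}\hat{\phi}(\omega)$ is a nonzero basis element of $\mathrm{L}W$. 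So either such $\omega$ must be excluded from $\hat{\mathcal{B}}_V$ by the cyclic-minimality clause --- but the reading of $\prec$ that would do this (fewer simple words first, over all rotations) also excludes the paper's own example $xzxyyz\in\hat{\mathcal{B}}$ --- or the proposition requires an additional hypothesis forbidding $\phi$-collisions between compatible subwords. Your proof cannot be completed as written: the assertion that ``the only surviving lift forces $\upsilon=\phi(\omega)$ with coefficient $\pm1$'' fails here, and even where it holds you would need the coefficient to be exactly $+1$, not $\pm1$, to get the stated equality $\mathrm{L}\phi(\mathcal{L}\omega)=\mathcal{L}\hat{\phi}(\omega)$. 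Any correct proof must confront the collision case head-on rather than asserting that the Theorem~\ref{T:main} induction goes through.
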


The above proposition can be used to quickly calculate $\mathcal{L}\hat{\mathcal{B}}$.  For
example, $[[[v_2,v_1],v_2],[v_2,v_1]] \in \mathcal{L}\hat{\mathcal{B}}$ so 
$[[[v_i, v_1], v_j], [v_k, v_1]] \in \mathcal{L}\hat{\mathcal{B}}$ as well for all $i,j,k\neq 1$. 
Neither $[\mathcal{B}]$, $\llbracket\mathcal{B}\rrbracket$, nor $[\hat{\mathcal{B}}]$ have 
this property.

\section{Future Directions}\label{S:future}

\subsection{Pairing matrix formula}
The off-diagonal elements of the pairing matrix of $\mathcal{B}^*$ and
the configuration basis $\mathcal{L}\mathcal{B}$ are due to applications of the Arnold
identity in the proof of Theorem~\ref{P:lower triangular}.  A more careful analysis, 
keeping track of signs and counting occurences should lead to a explicit formulas
writing $\mathcal{B}^*$ in terms of $\mathcal{G}\mathcal{B}$, which 
could be used to write the pairing matrix without any pairing computations.  This would
yield marked computational improvements, since there are no known formulas giving
the analogous pairing matrix for either $[\mathcal{B}]$ or $\llbracket\mathcal{B}\rrbracket$.
Since the applications of Arnold in Theorem~\ref{P:lower triangular} run from left
to right, this computation will be simplest for $\hat{\mathcal{B}}$ which gathers small
simple words on the left side.

\subsection{Gr\"obner basis implimentation}
The triangularity theorems (\ref{P:lower triangular} and \ref{P:hat lower triangular}) 
imply that $\mathcal{L}\mathcal{B}$ and
$\mathcal{L}\hat{\mathcal{B}}$ may be used in Gr\"obner basis calculations.  This should be
implimented in a computer algebra software platform such as {\it Sage} or {\it GAP}.

\subsection{Dual monomial basis}
It is unclear whether there is no dual monomial basis of 
Lie bracket expressions of $\mathrm{L}V$ and associative (bar) words of $\mathrm{E}V^*$.
As noted in Figure 6, the Lyndon-Shirshov words do not have a dual monomial basis.  Neither will
purely left-normed bracket expressions (e.g. those of the form $[[[\cdot,\cdot],\cdots],\cdot]$
-- expressions with 3 of one generator and 4 of another provides a counter-example).
However, ad-hoc dual monomial bases can be found for examples of computable size.

\appendix

\section{Bases}

Below we list the bases used in Figures~2, 4, and 5
presented previously.  Recall that in our computation of $\llbracket\mathcal{B}\rrbracket$,
we reversed the order of the basis elements to account for the use of Shirshov's ordering 
convention by \cite{Chib06}.

\begin{tabular}{c c c c }
	& $\mathcal{L}\omega$ & $[\omega]$ & $\llbracket\omega\rrbracket$ \\
$\omega_1 = xxyyzz$ & $[[[[[y,x],x],y],z],z]$ & $[x,[x,[y,[[y,z],z]]]]$ & $[[[[x,[x,y]],y],z],z]$ \\
$\omega_2 = xxyzyz$ & $[[[[[y,x],x],z],y],z]$ & $[x,[[x,[y,z]],[y,z]]]$ & $[[[[x,[x,y]],z],y],z]$ \\
$\omega_3 = xxyzzy$ & $[[[[[y,x],x],z],z],y]$ & $[x,[[x,[[y,z],z]],y]]$ & $[[[[x,[x,y]],z],z],y]$ \\
$\omega_4 = xxzyyz$ & $[[[[[z,x],x],y],y],z]$ & $[x,[[x,z],[y,[y,z]]]]$ & $[[[[x,[x,z]],y],y],z]$ \\
$\omega_5 = xxzyzy$ & $[[[[[z,x],x],y],z],y]$ & $[x,[[[x,z],[y,z]],y]]$ & $[[[[x,[x,z]],y],z],y]$ \\
$\omega_6 = xxzzyy$ & $[[[[[z,x],x],z],y],y]$ & $[x,[[[[x,z],z],y],y]]$ & $[[[[x,[x,z]],z],y],y]$ \\
$\omega_7 = xyxyzz$ & $[[[[y,x],z],z],[y,x]]$ & $[[x,y],[x,[[y,z],z]]]$ & $[[[x,y],[[x,y],z]],z]$ \\
$\omega_8 = xyxzyz$ & $[[[[z,x],y],z],[y,x]]$ & $[[x,y],[[x,z],[y,z]]]$ & $[[[[x,y],[x,z]],y],z]$ \\
$\omega_9 = xyxzzy$ & $[[[[z,x],z],y],[y,x]]$ & $[[x,y],[[[x,z],z],y]]$ & $[[[[x,y],[x,z]],z],y]$ \\
$\omega_{10} = xyyxzz$ & $[[[z,x],z],[[y,x],y]]$ & $[[[x,y],y],[[x,z],z]]$ & $[[[[x,y],y],[x,z]],z]$ \\
$\omega_{11} = xyyzxz$ & $[[z,x],[[[y,x],y],z]]$ & $[[x,[y,[y,z]]],[x,z]]$ & $[[[[x,y],y],z],[x,z]]$ \\
$\omega_{12} = xyzxzy$ & $[[[z,x],y],[[y,x],z]]$ & $[[x,[y,z]],[[x,z],y]]$ & $[[[[x,y],z],[x,z]],y]$ \\
$\omega_{13} = xyzyxz$ & $[[z,x],[[[y,x],z],y]]$ & $[[[x,[y,z]],y],[x,z]]$ & $[[[[x,y],z],y],[x,z]]$ \\
$\omega_{14} = xzxzyy$ & $[[[[z,x],y],y],[z,x]]$ & $[[x,z],[[[x,z],y],y]]$ & $[[[x,z],[[x,z],y]],y]$ \\
\end{tabular}

%\bibliographystyle{habbrv}
%\bibliography{references}

\begin{thebibliography}{10}

\bibitem{Chib06}
E.S.~Chibrikov.
\newblock A right normed basis for free {L}ie algebras and {L}yndon-{S}hirshov words.
\newblock {\em J. Algebra}, 302:593--612, 2006.

\bibitem{CHL58}
K.T.~Chen, R.H.~Fox, R.C.~Lyndon.
\newblock Free differential calculus, {I}{V}. The quotient groups of the lower central series.
\newblock {\em Ann. of Math.}, 68:81--95, 1958.

\bibitem{MeRe89}
G.~Melan{\c{c}}on and C.~Reutenauer.
\newblock Lyndon words, free algebras and shuffles.
\newblock {\em Can. J. Math}, XLI(4):577--591, 1989.

\bibitem{MeRe96}
G.~Melan{\c{c}}on and C.~Reutenauer.
\newblock Free {L}ie superalgebras, trees and chains of partitions.
\newblock {\em J. Algebraic Combin.}, 5(4):337--351, 1996.


\bibitem{Mich80}
W.~Michaelis.
\newblock Lie coalgebras.
\newblock {\em Adv. in Math.}, 38(1):1--54, 1980.

\bibitem{Radf79}
D.E.~Radford.
\newblock A natural ring basis for the shuffle algebra.
\newblock {\em J. Algebra}, 58:432--453, 1979.

\bibitem{Reut93}
C.~Reutenauer.
\newblock Free {L}ie algebras.
\newblock {\em London Math. Soc. Monographs New Ser.}, 7, 
Oxford Univ. Press, 1993.

\bibitem{Serr64}
J.P.~Serre.
\newblock Lie Algebras and Lie Groups.
\newblock {\em Harvard Lectures}, 1964.

\bibitem{ScSt85}
M.~Schlessinger and J.~Stasheff.
\newblock The Lie algebra structure of tangent cohomology and deformation theory.
\newblock {\em J. Pure Appl. Algebra}, 38(2):313--322, 1985.

\bibitem{Sinh06.2}
D.~P. Sinha.
\newblock A pairing between graphs and trees. 2006, math.QA/0502547.

\bibitem{SiWa07}
D.~P. Sinha and B.~Walter.
\newblock Lie coalgebras and rational homotopy theory, {I}: Graph coalgebras. 
\newblock math.AT/0610437 

\bibitem{Stoh08}
R.~St\"ohr.
\newblock Bases, filtrations and module decompositions of free {L}ie algebras.
\newblock {\em J. Pure Appl. Algebra}, 212(5):1187--1206, 2008. 

\bibitem{Walt10.2}
B.~Walter.
\newblock Lie algebra configuration pairing.
\newblock {\it in preparation}



\end{thebibliography}
%\end{document}

\end{document}